\documentclass{article}
\usepackage[utf8]{inputenc} 
\usepackage[T1]{fontenc} 
\usepackage{subcaption}
\usepackage{caption}
\usepackage{hyperref}
\hypersetup{
    colorlinks=true,   
    linkcolor=blue,     
    citecolor=blue,     
    urlcolor=blue,     
    pdftitle={},  
    pdfauthor={Dongwoo Gang} 
}
\usepackage{url}       
\usepackage{booktabs}     
\usepackage{amsfonts}    
\usepackage{lipsum}
\usepackage{color}
\usepackage{graphicx}
\usepackage{amsthm}
\usepackage{mathbbol}
\usepackage{bbm}
\usepackage{setspace}
\usepackage{authblk}
\usepackage{amsmath}
\usepackage{algorithm}
\usepackage{algpseudocode}
\usepackage{verbatim}
\usepackage{tikz}
\usepackage{tikz-cd}
\usepackage{tikz-3dplot}
\usepackage{geometry}
\usepackage[font=small,labelfont=bf]{caption}
\usetikzlibrary{calc,arrows.meta}
\geometry{margin=1in}

\newtheorem{lemma}{Lemma}[section]
\newtheorem{example}{Example}

\newtheorem{theorem}{Theorem}[section]
\newtheorem{remark}{Remark}[section]

\theoremstyle{definition}
\newtheorem{definition}{Definition}[section]

\title{Oriented Grassmannian Bundle, Normal Curvature Reduction, and Persistent Homology}
\author{Dongwoo Gang}
\date{}
\begin{document}
\maketitle
\onehalfspacing

\begin{abstract}
We consider a smooth closed orientable submanifold $M \subset \mathbb{R}^D$ with narrow cycles. We embed $M$ into a scaled oriented Grassmannian bundle via the Gauss map in order to enlarge the scale of these cycles. Under mild assumptions, we show that this embedding reduces the normal curvature of the embedded submanifold in directions where the original normal curvature is large. For smooth closed hypersurfaces, we further show that this construction increases the distance between antipodal points of narrow cycles for fixed volume.

We then obtain an explicit range of radii for which the ambient Čech complex on this Grassmannian bundle is homotopy equivalent to the embedded manifold, yielding lower bounds on the scales at which the Čech filtration recovers the homology of $M$. Since the distance induced by the embedding depends on both positions and oriented tangent spaces, we work with Whitney $C^1$ convergence of embeddings and prove that the associated Čech persistent homology is stable with respect to the interleaving distance. Finally, we describe a procedure for computing a distance matrix for a finite subset with respect to this embedding and illustrate the construction on several examples, including an approximate quasi-halo orbit in the Saturn--Enceladus system.
\end{abstract}

\section{Introduction}

\emph{Persistent homology} computes barcodes of filtered chain complexes constructed from a metric space $X$ \cite{zomorodian2004computing, oudot2015persistence}. For $r>0$, the \emph{\v{C}ech complex} \(\check{C}(X;r)\) is the nerve of the open cover of $r$-balls $\{B_X(x,r)\}_{x\in X}$, and the \emph{Vietoris--Rips complex} $\mathrm{VR}(X;r)$ is the abstract simplicial complex whose simplices are the finite subsets $\sigma\subset X$ with $\operatorname{diam}(\sigma)<r$. If $M$ is a Riemannian manifold, then the Čech complex $\check{C}(M;r)$ is homotopy equivalent to $M$ for $r<r_{\operatorname{conv}}(M)$ by the nerve lemma, while the systole and the filling radius bound the range of $r$ for which the Vietoris--Rips complex $\mathrm{VR}(M;r)$ has the same first and top-dimensional homology groups as $M$ \cite{virk20201, lim2024vietoris, balitskiy2025geometric}.

The \emph{reach} (or \emph{normal injectivity radius}) $\mathsf{rch}_{\mathbb{R}^D}(M)$ of a smooth submanifold $M \subset \mathbb{R}^D$ is the supremum of $\tau>0$ such that every point $x \in \mathbb{R}^D$ with $d_{\mathbb{R}^D}(x,M)<\tau$ has a unique nearest point in $M$ \cite{federer1959curvature, aamari2019estimating}. For smooth embedded submanifolds, this agrees with the supremum of radii for which the normal exponential map is injective. The reach of $M$ gives bounds on the scales $r$ where the Vietoris--Rips or Čech complex built from sufficiently dense points of $M$ in the ambient Euclidean metric \cite{niyogi2008finding, kim2019homotopy}, and the \emph{metric thickening} of $M$ \cite{adams2019metric, adams2022metric, adams2024persistent}, the 1-Wasserstein analogue of the Vietoris--Rips complex, are homotopy equivalent to $M$. 

Aamari et al. \cite{aamari2019estimating} showed that the reach of a submanifold $M \subset \mathbb{R}^D$ can be expressed as the minimum of the reciprocal of the \emph{maximal normal curvature} of $M$ and the minimal width of its \emph{bottlenecks}, pairs of points in $M$ whose connecting line segment is orthogonal to both tangent spaces. For example, for a highly eccentric ellipse in $\mathbb{R}^2$ or a thin $2$-torus with small systole in $\mathbb{R}^3$ as in Figure~\ref{ellipse_torus}, the normal exponential map ceases to be injective at a radius comparable to the reach, since the points of large curvature (marked in red) induce focal points and the bottlenecks (marked in blue) lie close to each other in the Euclidean metric. 

In both examples, the indicated bottlenecks have opposite normal directions. This suggests that, for an orientable submanifold $M \subset \mathbb{R}^D$, embedding $M$ into an augmented ambient space, such as the \emph{oriented Grassmannian bundle} $\mathbb{R}^D \times \mathbf{Gr}^+(D,d)$, can enlarge the normal injectivity radius of $M$. In the product metric on $\mathbb{R}^D \times \mathbf{Gr}^+(D,d)$, the contribution from the Grassmannian factor enlarges the lengths of curves near points of large normal curvature, so that unit-speed geodesics have smaller normal curvature, and bottleneck pairs with opposite normal directions are pushed farther apart because their oriented tangent spaces are far from each other in $\mathbf{Gr}^+(D,d)$.

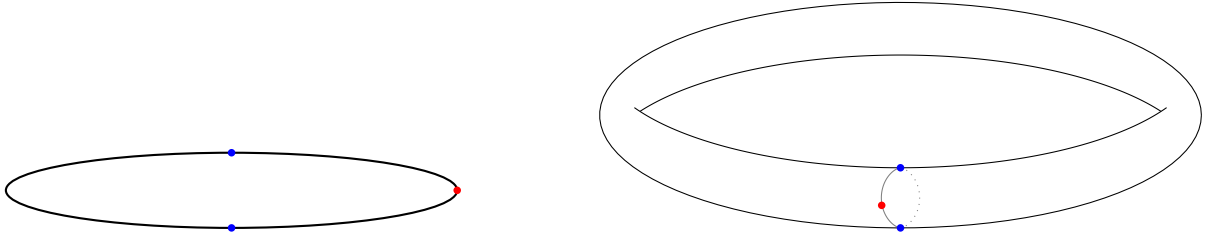
\begin{figure}
\centering
\begin{subfigure}[t]{0.47\textwidth}
\centering
\begin{tikzpicture}
  \draw[thick] (0,0) ellipse [x radius=3.0cm, y radius=0.5cm];
  \fill[blue] (0,0.5) circle(0.05);
  \fill[blue] (0,-0.5) circle(0.05);
  \fill[red] (3.0,0) circle(0.05);
\end{tikzpicture}
\end{subfigure}
\hfill
% ---- (b) Very thin torus (2D projection style) ----
\begin{subfigure}[t]{0.47\textwidth}
\centering
\begin{tikzpicture}[samples=100, variable=\t] 
  \def\a{4}
  \def\b{1.5}
  \draw[domain=0:2*pi]             plot ({\a*cos(\t r)},{\b*sin(\t r)});
  \draw[domain=pi/6:5*pi/6]        plot ({\a*cos(\t r)},{\b*sin(\t r) - 0.7});
  \draw[domain=-0.3+5*pi/4:0.3+7*pi/4]
                                   plot ({\a*cos(\t r)},{\b*sin(\t r) + 0.8});
  \pgfmathsetmacro{\R}{0.8/(2*pi)}
  \draw[domain=0:2*pi, smooth, gray] plot ({-\R*(1-cos(\t r))}, {-\R*(\t - sin(\t r)) - 0.7});
  \draw[domain=0:2*pi, smooth, gray, dotted]
    plot ({\R*(1 - cos(\t r))},
        {-\R*(\t - sin(\t r)) - 0.7});

  \fill[blue] (0,-0.7) circle(0.05);
  \fill[blue] (0,-1.5) circle(0.05);
  \fill[red] (-0.25,-1.2) circle(0.05);
\end{tikzpicture}
\end{subfigure}
\caption{A highly eccentric ellipse (left) and a thin 2-torus (right). Points of large curvature are marked in red, and the narrowest bottlenecks are indicated in blue.}
\label{ellipse_torus}
\end{figure}

To formalize the above discussion, let $\iota : M \to \mathbb{R}^D$ be an embedding of a smooth closed orientable $d$-dimensional submanifold. Fix $c>0$ and consider the \emph{scaled oriented Grassmannian}  $(\mathbf{Gr}_c^+(D,d), g^{\mathbf{Gr}_c^+(D,d)})$, whose underlying manifold is the oriented Grassmannian $\mathbf{Gr}^+(D,d)$ equipped with the rescaled metric $g^{\mathbf{Gr}_c^+(D,d)} = c \cdot g^{\mathbf{Gr}^+(D,d)}$. Let $\bar{\mathbf{g}}^+$ denote the Gauss map, and define the rescaled Gauss map
$$\bar{\mathbf{g}}^+_c : M \xrightarrow{\bar{\mathbf{g}}^+} (\mathbf{Gr}^+(D,d), g^{\mathbf{Gr}^+(D,d)}) \xrightarrow{ \cdot c } (\mathbf{Gr}_c^+(D,d), g^{\mathbf{Gr}_c^+(D,d)}).$$
We consider the graph embedding $\iota \times \bar{\mathbf{g}}_c^+ : M \to (\mathbb{R}^D \times \mathbf{Gr}_c^+(D,d),\, g^{\mathbb{R}^D} \oplus g^{\mathbf{Gr}_c^+(D,d)}),$ and define $g_c = (\iota \times \bar{\mathbf{g}}^+_c)^*(g^{\mathbb{R}^D} \oplus g^{\mathbf{Gr}_c^+(D,d)})$. Throughout, we write $\mathbf{II}$ for the second fundamental form of $M \subset \mathbb{R}^D$ with the induced Euclidean metric and $\mathbf{II}_c$ for that of $(M,g_c) \subset \mathbb{R}^D \times \mathbf{Gr}_c^+(D,d)$ with the product metric. Likewise, we write $\mathrm{vol}_c(M)$ for the Riemannian volume of $(M,g_c)$ and $\mathrm{vol}(M)$ for the Euclidean volume of $M \subset \mathbb{R}^D$.
\begin{definition}\label{grassmannian_distance}
For $c>0$, we define a distance $d_c$ on $M$ by
\[
  d_{c}(q_1,q_2) = d_{\mathbb{R}^D \times \mathbf{Gr}_c^+(D,d)}\bigl((q_1, \bar{\mathbf{g}}_c^+(q_1)),(q_2, \bar{\mathbf{g}}_c^+(q_2))\bigr)
  = \sqrt{\|q_1-q_2\|_{\mathbb{R}^D}^2
      + c\, d_{\mathbf{Gr}^+(D,d)}\bigl(\bar{\mathbf{g}}^+(q_1),\bar{\mathbf{g}}^+(q_2)\bigr)^2},
\]
where $\|\cdot\|_{\mathbb{R}^D}$ denotes the Euclidean norm and
$d_{\mathbf{Gr}^+(D,d)}$ is the Riemannian distance on the oriented Grassmannian
$\mathbf{Gr}^+(D,d)$.
\end{definition}

We first show that, by embedding $M$ into $\mathbb{R}^D \times \mathbf{Gr}_c^+(D,d)$, we can reduce the normal curvature of $M$ along directions where the original normal curvature is large. For $p \in M$ and a nonzero vector $v \in T_pM$, denote by $\kappa_c(v)$ and $\kappa(v)$ the (operator norms of the) normal curvatures of $(M,g_c) \subset \mathbb{R}^D \times \mathbf{Gr}_c^+(D,d)$ at $(p,\bar{\mathbf{g}}_c^+(p))$ in the direction $(v, d\bar{\mathbf{g}}_c^+(v))$ and of $M \subset \mathbb{R}^D$ at $p$ in the direction $v$, respectively, where both geodesics are parametrized by arc length.
\begin{theorem}\label{principal_curvature}
  Let $M \subset \mathbb{R}^D$ be a smooth closed orientable $d$-dimensional submanifold. Fix $p \in M$ and a unit vector $v \in T_p M$. Assume that the normal curvature at $p$ in the direction $v$ is sufficiently large so that
  \begin{equation*}
    \mathrm{vol}_c(M) \leq \bigl(1 + c \,\|\mathbf{II}(v,\cdot)\|_{\operatorname{HS}}^2\bigr)^{d/2} \mathrm{vol}(M),
    \qquad
    \kappa(v) > \frac{\|(\nabla_v^{M} \mathbf{II}) (v,\cdot) \|_{\operatorname{HS}}}{\|\mathbf{II} (v,\cdot)\|_{\operatorname{HS}}},
  \end{equation*}
  where $\|\cdot\|_{\operatorname{HS}}$ denotes the Hilbert--Schmidt norm. Then, for every $c>0$,
  \[
    \kappa_c(v)\, \mathrm{vol}_c(M)^{1/d}
    \;\leq\;
    \frac{\sqrt{\kappa(v)^2 + c\|(\nabla_v^{M} \mathbf{II})(v, \cdot)\|_{\operatorname{HS}}^2}}{1 + c\|\mathbf{II}(v,\cdot)\|_{\operatorname{HS}}^2}\,
    \mathrm{vol}_c(M)^{1/d}
    \;<\;
    \kappa(v)\, \mathrm{vol}(M)^{1/d},
  \]
and the middle term in the inequalities is strictly decreasing as $c$ grows.
\end{theorem}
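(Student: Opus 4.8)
The plan is to compute the ambient acceleration of a lifted geodesic, identify its two components, bound the normal component by the whole, and finish by elementary algebra using the two hypotheses. I first record the infinitesimal geometry. Under the canonical identification $T_{\bar{\mathbf{g}}^+(p)}\mathbf{Gr}^+(D,d)\cong\operatorname{Hom}(T_pM,N_pM)$ with the Hilbert--Schmidt norm, the oriented Gauss map has differential $d\bar{\mathbf{g}}^+(w)=\mathbf{II}(w,\cdot)$, so $\|d\bar{\mathbf{g}}_c^+(w)\|^2=c\|\mathbf{II}(w,\cdot)\|_{\operatorname{HS}}^2$ and $g_c=g^{\mathbb{R}^D}+c\,(\bar{\mathbf{g}}^+)^*g^{\mathbf{Gr}^+(D,d)}$; in particular $g_c(v,v)=1+c\|\mathbf{II}(v,\cdot)\|_{\operatorname{HS}}^2=:\lambda^2$, the $g_c$-unit vector in the direction of $v$ is $\hat v=v/\lambda$, and $\mathrm{vol}_c(M)$ is finite. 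I read $\kappa_c(v)=\|\mathbf{II}_c(\hat v,\hat v)\|$ and $\kappa(v)=\|\mathbf{II}(v,v)\|$ as the curvatures, in the respective ambient spaces, of the arclength-parametrized geodesics of $(M,g_c)$ and of $M\subset\mathbb{R}^D$.

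\emph{Second fundamental form of the lift.} Rescaling a metric leaves its Levi-Civita connection unchanged, so the connection on $\mathbb{R}^D\times\mathbf{Gr}_c^+(D,d)$ is the product of the flat connection with $\nabla^{\mathbf{Gr}^+(D,d)}$. Let $\gamma$ be the unit-speed $M$-geodesic in $\mathbb{R}^D$ with $\dot\gamma(0)=v$ and $\Gamma=(\iota\times\bar{\mathbf{g}}_c^+)\circ\gamma$; since $\mathbf{II}_c$ is tensorial, any such curve may be used. The ambient covariant derivative of $\dot\Gamma$ at $0$ is $\bigl(\ddot\gamma(0),\tfrac{D}{dt}d\bar{\mathbf{g}}^+(\dot\gamma)|_0\bigr)$; the Gauss formula in $\mathbb{R}^D$ gives $\ddot\gamma(0)=\mathbf{II}(v,v)$, and I would show $\tfrac{D}{dt}d\bar{\mathbf{g}}^+(\dot\gamma)|_0=(\nabla^M_v\mathbf{II})(v,\cdot)$ using that the pullback under $\bar{\mathbf{g}}^+$ of the Levi-Civita connection of $\mathbf{Gr}^+(D,d)$ is the connection on $\operatorname{Hom}(TM,NM)$ induced by the Levi-Civita and normal connections of $M\subset\mathbb{R}^D$ (the term $\mathbf{II}(\nabla^M_{\dot\gamma}\dot\gamma,\cdot)$ vanishing since $\gamma$ is a geodesic). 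Hence the ambient acceleration of $\Gamma$ at $0$ has squared norm $\kappa(v)^2+c\|(\nabla^M_v\mathbf{II})(v,\cdot)\|_{\operatorname{HS}}^2$, and its component normal to the image, namely $\mathbf{II}_c(\dot\Gamma(0),\dot\Gamma(0))=\lambda^2\mathbf{II}_c(\hat v,\hat v)$, has norm at most that; therefore
\[
  \kappa_c(v)\,\le\,\frac{\bigl(\kappa(v)^2+c\|\nabla^M_v\mathbf{II}(v,\cdot)\|_{\operatorname{HS}}^2\bigr)^{1/2}}{1+c\|\mathbf{II}(v,\cdot)\|_{\operatorname{HS}}^2},
\]
which is the first inequality after multiplying by $\mathrm{vol}_c(M)^{1/d}$. (The same block computation gives $\mathbf{II}_c(X,Y)=\Pi^{\perp}\bigl(\mathbf{II}(X,Y),(\nabla^M_X\mathbf{II})(Y,\cdot)\bigr)$, symmetric by Codazzi in $\mathbb{R}^D$; this is the input needed if $\kappa_c,\kappa$ are instead interpreted as operator norms of $\mathbf{II}_c(\hat v,\cdot),\mathbf{II}(v,\cdot)$.)

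\emph{Algebra and monotonicity.} Put $s=\|\mathbf{II}(v,\cdot)\|_{\operatorname{HS}}^2$ and $t=\|\nabla^M_v\mathbf{II}(v,\cdot)\|_{\operatorname{HS}}^2$, noting $s\ge\kappa(v)^2>0$. The volume hypothesis gives $\mathrm{vol}_c(M)^{1/d}\le(1+cs)^{1/2}\mathrm{vol}(M)^{1/d}$, so the middle term is at most $(\kappa(v)^2+ct)^{1/2}(1+cs)^{-1/2}\mathrm{vol}(M)^{1/d}$, which is $<\kappa(v)\mathrm{vol}(M)^{1/d}$ exactly when $\kappa(v)^2+ct<\kappa(v)^2(1+cs)$, i.e.\ $t<\kappa(v)^2s$ — precisely the second hypothesis, with strictness for every $c>0$. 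For the monotonicity, $\tfrac{d}{dc}\log\bigl((\kappa(v)^2+ct)/(1+cs)^2\bigr)$ has numerator $t(1+cs)-2s(\kappa(v)^2+ct)=t-2s\kappa(v)^2-cst<0$ for all $c>0$ since $2s\kappa(v)^2>2t$; hence the $c$-dependent coefficient $(\kappa(v)^2+c\|\nabla^M_v\mathbf{II}(v,\cdot)\|_{\operatorname{HS}}^2)^{1/2}/(1+c\|\mathbf{II}(v,\cdot)\|_{\operatorname{HS}}^2)$ in the middle term is strictly decreasing.

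\textbf{Main obstacle.} The technical heart is the connection identification in the second step: verifying that the covariant derivative of the Gauss-map velocity along $\bar{\mathbf{g}}^+\circ\gamma$, taken with the Levi-Civita connection of the symmetric space $\mathbf{Gr}^+(D,d)$, is exactly $(\nabla^M_v\mathbf{II})(v,\cdot)$ with no curvature correction — equivalently, that every quadratic-in-$\mathbf{II}$ contribution lies in the part tangent to the Grassmannian that the normal projection discards. I would carry this out in the projection-operator model $\mathbf{Gr}^+(D,d)\hookrightarrow\operatorname{End}(\mathbb{R}^D)$, $[V]\mapsto P_V$, by tracking the block decomposition of $\tfrac{d^2}{dt^2}P_{T_{\gamma(t)}M}$ under the orthogonal projection onto $T_{P_{T_pM}}\mathbf{Gr}^+(D,d)$; everything else is the Gauss formula and the elementary estimates above.
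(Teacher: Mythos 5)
Your proposal is correct and follows essentially the same route as the paper: the ambient acceleration of the lifted curve is computed to be $\bigl(\mathbf{II}(v,v),(\nabla^M_v\mathbf{II})(v,\cdot)\bigr)$ up to a tangential term, its normal projection is bounded by its full norm (the paper does this by pairing with an explicit unit normal from its description of $\nu_c$ and applying Cauchy--Schwarz, which amounts to the same estimate), and the volume and curvature hypotheses close the algebra exactly as you do. The connection identification you flag as the main obstacle is precisely the paper's Lemma~\ref{grass_connection}, proved there from $d\bar{\mathbf{g}}=\mathbf{II}$ and the Leibniz rule for the induced connection on $\operatorname{Hom}(TM,\nu)$, so no new idea is missing.
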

In directions with large normal curvatures, even a small displacement along the curve already produces a large change in the tangent space, so the Grassmannian component of $g_c$ contributes an increasingly large portion of the speed. Since the normal curvature is defined as the size of the normal acceleration divided by the square of this speed, reparameterizing the curve to have unit speed with respect to $g_c$ makes the normal curvature decrease as $c$ grows.

Next, to quantify how distances between bottleneck pairs change with respect to $d_c$, we set
\begin{align*}
  L_c(M)
  &= \min\Bigl\{\frac{1}{2}d_c(q_1,q_2) \;\Big|\; (q_1,q_2) \text{ is a bottleneck of } M \subset \mathbb{R}^D\Bigr\} \\
  &= \min\Bigl\{\frac{1}{2}d_c(q_1,q_2) \;\Big|\; \overline{q_1q_2} \perp T_{q_1}M,\ \overline{q_1q_2} \perp T_{q_2}M\Bigr\}.
\end{align*}
Similarly, in the Euclidean case, we define
\[
  L(M)
  = \min\Bigl\{\frac{1}{2}\|q_1-q_2\|_{\mathbb{R}^D} \;\Big|\; (q_1,q_2) \text{ is a bottleneck of } M \subset \mathbb{R}^D\Bigr\}.
\]
Under a mild curvature hypothesis, we show that for a smooth closed orientable hypersurface $M$, the bottlenecks are separated when distances are measured with $d_c$ from Definition~\ref{grassmannian_distance}.    
\begin{theorem}\label{normalized_bottleneck}
  Let $M \subset \mathbb{R}^D$ be a smooth closed orientable hypersurface. Suppose that $L(M) \leq \frac{1}{\|\mathbf{II}\|_2}$.
  Then for every $c \in \bigl(0, \tfrac{12L(M)^2}{\pi^2}\bigr]$,
    \[
    \frac{L_c(M)}{\mathrm{vol}_c(M)^{1/(D-1)}}
    \;\geq\;
    \frac{\sqrt{4L(M)^2 + c\pi^2}}{2\,\mathrm{vol}_c(M)^{1/(D-1)}}
    \;>\;
    \frac{L(M)}{\mathrm{vol}(M)^{1/(D-1)}},
  \]
  and the middle term in the inequalities is strictly increasing in $c$ on the interval $\bigl(0, \tfrac{12L(M)^2}{\pi^2}\bigr]$.
\end{theorem}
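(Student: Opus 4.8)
The plan is to reduce the statement to a lower bound on the Grassmannian distance $d_{\mathbf{Gr}^+(D,d)}(\bar{\mathbf{g}}^+(q_1),\bar{\mathbf{g}}^+(q_2))$ between the oriented tangent spaces at the two ends of a bottleneck. Since $M$ is a hypersurface, each oriented tangent space is determined by a unit normal, so $\mathbf{Gr}^+(D,D-1)\cong S^{D-1}$ isometrically (up to a fixed scaling convention), and $\bar{\mathbf{g}}^+$ becomes the Gauss map $\nu:M\to S^{D-1}$. For a bottleneck $(q_1,q_2)$, the segment $\overline{q_1q_2}$ is orthogonal to $T_{q_1}M$ and $T_{q_2}M$, so the two unit normals are $\pm\frac{q_2-q_1}{\|q_2-q_1\|}$; they are antipodal on $S^{D-1}$, hence $d_{\mathbf{Gr}^+(D,d)}(\bar{\mathbf{g}}^+(q_1),\bar{\mathbf{g}}^+(q_2))=\pi$ (with the round metric of radius $1$; I will fix the normalization so this constant is exactly $\pi$, adjusting the appearance of $\pi^2$ in the statement accordingly). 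Plugging into Definition~\ref{grassmannian_distance} gives $d_c(q_1,q_2)=\sqrt{\|q_1-q_2\|^2+c\pi^2}$ for every bottleneck, so
\[
  L_c(M)=\min\Bigl\{\tfrac12\sqrt{\|q_1-q_2\|^2+c\pi^2}\ :\ (q_1,q_2)\text{ a bottleneck}\Bigr\}
  =\tfrac12\sqrt{4L(M)^2+c\pi^2},
\]
because $t\mapsto\sqrt{t^2+c\pi^2}$ is increasing, so the minimizing bottleneck is the same as in the Euclidean case. This establishes the middle equality (as an equality, even); the left inequality in the theorem is then automatic.

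For the strict inequality on the right, I must compare $\dfrac{\sqrt{4L(M)^2+c\pi^2}}{2\,\mathrm{vol}_c(M)^{1/(D-1)}}$ with $\dfrac{L(M)}{\mathrm{vol}(M)^{1/(D-1)}}$. The key auxiliary estimate is an upper bound on $\mathrm{vol}_c(M)$: since $g_c=g^M+c\,(\bar{\mathbf{g}}^+)^*g^{\mathbf{Gr}^+}$ and $(\bar{\mathbf{g}}^+)^*g^{\mathbf{Gr}^+}$ is, for a hypersurface, the third fundamental form $\mathrm{III}=\mathbf{II}\cdot\mathbf{II}$ (the pullback of the round metric under the Gauss map has eigenvalues the squares of the principal curvatures), the metric $g_c$ has, in a principal frame, eigenvalues $1+c\kappa_i^2$ relative to $g^M$. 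Hence
\[
  \mathrm{vol}_c(M)=\int_M\prod_{i=1}^{D-1}\sqrt{1+c\kappa_i^2}\ \mathrm{dvol}
  \ \le\ \bigl(1+c\|\mathbf{II}\|_2^2\bigr)^{(D-1)/2}\mathrm{vol}(M),
\]
using $\kappa_i^2\le\|\mathbf{II}\|_2^2$ pointwise (operator-norm bound). So it suffices to show
\[
  \frac{\sqrt{4L(M)^2+c\pi^2}}{2\bigl(1+c\|\mathbf{II}\|_2^2\bigr)^{1/2}}\;>\;L(M),
  \qquad\text{i.e.}\qquad
  4L(M)^2+c\pi^2\;>\;4L(M)^2\bigl(1+c\|\mathbf{II}\|_2^2\bigr),
\]
which simplifies to $\pi^2>4L(M)^2\|\mathbf{II}\|_2^2$, i.e.\ $L(M)<\tfrac{\pi}{2\|\mathbf{II}\|_2}$ — and this follows from the hypothesis $L(M)\le 1/\|\mathbf{II}\|_2$ since $1<\pi/2$. (Here one also uses $\mathrm{vol}_c(M)^{1/(D-1)}\le(1+c\|\mathbf{II}\|_2^2)^{1/2}\mathrm{vol}(M)^{1/(D-1)}$, a rearrangement of the volume bound, to pass from the displayed scalar inequality back to the claimed normalized inequality.)

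Finally, for monotonicity of the middle term $f(c)=\dfrac{\sqrt{4L(M)^2+c\pi^2}}{2\,\mathrm{vol}_c(M)^{1/(D-1)}}$ in $c$, the numerator is increasing in $c$, but the denominator also grows, so this needs the constraint $c\le 12L(M)^2/\pi^2$. I would bound $\mathrm{vol}_c(M)^{1/(D-1)}$ from above by a concave-in-$c$ quantity and differentiate, or — cleaner — use the pointwise concavity of $c\mapsto\prod_i\sqrt{1+c\kappa_i^2}$... which fails in general, so instead I would argue as follows: write $\mathrm{vol}_c(M)^{2/(D-1)}\le A+Bc$ with $A=\mathrm{vol}(M)^{2/(D-1)}$ and $B=\|\mathbf{II}\|_2^2\,\mathrm{vol}(M)^{2/(D-1)}$ (this linear bound follows from $\prod(1+c\kappa_i^2)\le(1+c\|\mathbf{II}\|_2^2)^{D-1}$ together with Bernoulli/convexity after taking $(D-1)$-th roots, or directly from $\prod(1+c\kappa_i^2)^{1/(D-1)}\le 1+\frac{c}{D-1}\sum\kappa_i^2\le 1+c\|\mathbf{II}\|_2^2$ by AM–GM), so $f(c)^2\le\dfrac{4L(M)^2+c\pi^2}{4(A+Bc)}$, and showing the right-hand side is increasing in $c$ amounts to $\pi^2 A>4L(M)^2 B=4L(M)^2\|\mathbf{II}\|_2^2 A$, the same inequality as before. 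I expect the main obstacle to be bookkeeping around the normalization of the metric on $\mathbf{Gr}^+(D,D-1)$ (whether antipodal oriented hyperplanes are at distance $\pi$, $2\pi$, or $\pi/\sqrt2$ under the chosen convention) and, relatedly, making the monotonicity argument honest rather than replacing $\mathrm{vol}_c(M)$ by its upper bound — the latter is legitimate for the lower-bound inequality but must be handled carefully (e.g.\ via the linear-in-$c$ envelope above) when one wants strict monotonicity of the exact middle expression on the stated interval.
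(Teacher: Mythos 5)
Your reduction to the Grassmannian distance between the two ends of a bottleneck contains a genuine gap: you assert that for any bottleneck $(q_1,q_2)$ the two unit normals are antipodal on $S^{D-1}$, hence $d_{\mathbf{Gr}^+(D,D-1)}(\bar{\mathbf{g}}^+(q_1),\bar{\mathbf{g}}^+(q_2))=\pi$ and $d_c(q_1,q_2)=\sqrt{\|q_1-q_2\|^2+c\pi^2}$ always. This is false. Orthogonality of $\overline{q_1q_2}$ to both tangent spaces only forces each normal to be $\pm\frac{q_2-q_1}{\|q_2-q_1\|}$; for a fixed orientation of $M$ the two outward normals can be \emph{equal}, in which case the Gauss images coincide and $d_c(q_1,q_2)=\|q_1-q_2\|$, so the Grassmannian factor contributes nothing. (Concretely, on the standard torus the points $(R+r,0,0)$ and $(-(R-r),0,0)$ form such a bottleneck with equal outward normals; the paper's Lemma~\ref{antipodal} only yields antipodality when the \emph{open segment is disjoint from $M$}, which is not automatic for bottlenecks.) Consequently your claimed equality $L_c(M)=\tfrac12\sqrt{4L(M)^2+c\pi^2}$ fails, and the first (and only nontrivial geometric) step of the theorem is unproved.

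The paper closes exactly this gap in Theorem~\ref{hypersurface}: when $\nu_{q_1}=\nu_{q_2}$, the segment must meet $M$ at an intermediate point $q_3$, and — using that $L(M)\le 1/\|\mathbf{II}\|_2$ forces the reach to be realized by a bottleneck and to equal $L(M)$ (Theorem~\ref{reach}) — a midpoint/unique-nearest-point argument shows $\|q_1-q_3\|\ge 2L(M)$ and $\|q_3-q_2\|\ge 2L(M)$, hence $\tfrac12 d_c(q_1,q_2)\ge 2L(M)$. This yields $L_c(M)\ge\min\bigl(\tfrac12\sqrt{4L(M)^2+c\pi^2},\,2L(M)\bigr)$, and the hypothesis $c\le 12L(M)^2/\pi^2$ is precisely what makes the first term the minimum — a role for that constraint your proposal never identifies (your monotonicity argument reduces to a $c$-independent inequality). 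You also use $L(M)\le 1/\|\mathbf{II}\|_2$ only for the final algebraic comparison, whereas it is equally essential for the reach identification above. The remainder of your argument (the volume bound $\mathrm{vol}_c(M)\le(1+c\|\mathbf{II}\|_2^2)^{(D-1)/2}\mathrm{vol}(M)$, the reduction of the strict inequality and of monotonicity to $\pi^2>4L(M)^2\|\mathbf{II}\|_2^2$) matches the paper's computation and is fine, including your correct observation that the monotonicity is really established for a lower envelope of the middle term rather than the middle term itself.
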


We obtain an explicit range of radii $r$ for which the ambient Čech complex $\check{C}(M, \mathbb{R}^D\times\mathbf{Gr}_c^+(D,d);r)$ is homotopy equivalent to $M$. In particular, this yields a lower bound on the lengths of the barcodes in the associated persistent homology, from which the homology of $M$ is recovered from the distance $d_c$.
\begin{theorem}\label{length_barcodes}
Let $M \subset \mathbb{R}^D$ be a smooth closed orientable $d$-dimensional submanifold. For $c>0$, the ambient Čech complex $\check{C}(M, \mathbb{R}^D\times\mathbf{Gr}_c^+(D,d);r)$ is homotopy equivalent to \(M\) for all
\[
  r <
  \min\!\left(
    \sqrt{\frac{c}{2}}\arctan\sqrt{\frac{2}{c\|\mathbf{II}_c\|_2}},\,
    \frac{\sqrt{c}\,\pi}{2},\,
    L'_c(M)
  \right),
\]
where
\[
  L'_c(M)
  = \min \Bigl\{\frac{1}{2}d_c(q_1,q_2) \;\Bigm|\;
  \overline{(q_1,\bar{\mathbf{g}}_c^+(q_1))(q_2,\bar{\mathbf{g}}_c^+(q_2))}
  \perp T_{(q_i,\bar{\mathbf{g}}_c^+(q_i))}(M,g_c),\ i=1,2 \Bigr\}.
\]
\end{theorem}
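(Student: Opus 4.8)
The plan is to factor the desired homotopy equivalence through the open tube around the embedded copy of $M$. Write $N = \mathbb{R}^D \times \mathbf{Gr}_c^+(D,d)$, let $\widetilde{\iota} = \iota \times \bar{\mathbf{g}}_c^+ : M \to N$, put $\widetilde{M} = \widetilde{\iota}(M)$, and for $r>0$ let $M^r = \{x \in N : d_N(x,\widetilde{M}) < r\}$. By definition the ambient Čech complex $\check{C}(M,N;r)$ is the nerve of the open cover $\{B_N(\widetilde{\iota}(p),r)\}_{p\in M}$ of $M^r$. I would prove the theorem by producing, on the stated range of $r$, the chain $\check{C}(M,N;r) \simeq M^r \simeq \widetilde{M} \cong M$: the first equivalence from the nerve lemma, the second from a deformation retraction of the tube onto $\widetilde{M}$. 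The key point is that these hold once $r$ is below, respectively, the convexity radius $\mathrm{conv}(N)$ and the reach $\mathsf{rch}_N(\widetilde{M})$, and that the three terms in the displayed minimum are lower bounds for these two quantities.

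For the nerve lemma it suffices that every nonempty finite intersection of the balls $B_N(\widetilde{\iota}(p),r)$ be contractible. If $r < \mathrm{conv}(N)$, each such ball is strongly convex, so every finite intersection of them is strongly convex as well (any two of its points are joined by the unique ambient minimizing geodesic, which lies inside each ball and hence inside the intersection), and a nonempty strongly convex set deformation retracts to a point along these geodesics. Since the metric on $N$ splits as a Riemannian product and $\mathbb{R}^D$ has infinite convexity radius, $\mathrm{conv}(N) = \mathrm{conv}(\mathbf{Gr}_c^+(D,d)) = \sqrt{c}\,\mathrm{conv}(\mathbf{Gr}^+(D,d))$, and in the normalization used here this contributes the bound $r < \tfrac{\sqrt{c}\,\pi}{2}$, the second displayed term; note this also forces $r < \mathrm{inj}(N)$, which is used below.

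For the tube, recall that if $r < \mathsf{rch}_N(\widetilde{M})$ then the normal exponential map $\exp^{\perp}$ restricts to a diffeomorphism from the open radius-$r$ disc bundle of the normal bundle of $\widetilde{M}$ onto $M^r$, so nearest-point projection $M^r \to \widetilde{M}$ is a deformation retraction. It remains to bound $\mathsf{rch}_N(\widetilde{M})$ below by the minimum of the first and third terms. I would first establish the Riemannian analogue of the reach dichotomy of \cite{aamari2019estimating}: writing $\rho = \mathsf{rch}_N(\widetilde{M})$, either $\rho$ is at least the focal radius of $\widetilde{M}$ in $N$, or there is a pair of distinct points $q_1,q_2 \in M$ whose minimizing $N$-geodesic meets $T\widetilde{M}$ orthogonally at both endpoints with $\tfrac12 d_c(q_1,q_2) \le \rho$, so $L'_c(M) \le \rho$. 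This is obtained by choosing, for radii approaching $\rho$, points $p_n \ne q_n$ and $x_n = \exp^{\perp}_{p_n}(u_n) = \exp^{\perp}_{q_n}(w_n)$ witnessing failure of injectivity, extracting a convergent subsequence by compactness of $M$ (the limiting source points stay distinct because below the focal radius and below $\mathrm{inj}(N)$ the normal exponential map is a local diffeomorphism that is injective on each normal fibre), and applying the first-variation formula to the distance from $x_n$ to $\widetilde{M}$ to get orthogonality in the limit. For the focal radius itself, since $N$ is the product of the flat $\mathbb{R}^D$ with the nonnegatively curved symmetric space $\mathbf{Gr}_c^+(D,d)$, its sectional curvature is bounded above by $\delta = \tfrac{1}{c}\max\mathrm{sec}(\mathbf{Gr}^+(D,d))$; the standard Jacobi-field (Rauch) comparison for focal points then bounds the first focal distance along any unit-speed normal geodesic of $\widetilde{M}$ below by $\tfrac{1}{\sqrt{\delta}}\arctan\!\bigl(\tfrac{\sqrt{\delta}}{\|\mathbf{II}_c\|_2}\bigr)$, which in the normalization $\max\mathrm{sec}(\mathbf{Gr}^+(D,d)) = 2$ is the first displayed term. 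Combining, $\mathsf{rch}_N(\widetilde{M}) \ge \min(\text{focal bound}, L'_c(M))$, so $r < \min(\mathrm{conv}(N),\mathsf{rch}_N(\widetilde{M}))$ holds on the asserted range.

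I expect the main obstacle to be the Riemannian reach dichotomy: one must verify that the compactness argument genuinely yields a pair of distinct points lying strictly below the focal range, and that the first-variation argument transports cleanly to a curved ambient space, where the connecting geodesic is no longer a straight segment and ``orthogonal to both tangent spaces'' must be read via the geodesic's endpoint tangents. A secondary but essential technical point is pinning down, in the paper's normalization of $g^{\mathbf{Gr}^+(D,d)}$, the exact maximal sectional curvature and convexity (or injectivity) radius of $\mathbf{Gr}^+(D,d)$ as the symmetric space $SO(D)/(SO(d)\times SO(D-d))$, since these produce the constants in the first two terms and feed directly into the Rauch comparison and the nerve-lemma step; for $d\ge 2$ this requires some care beyond the $\mathbf{Gr}^+(D,1)=S^{D-1}$ case. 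Finally, one should confirm that passing from the Čech nerve over the continuum $M$ to an honest simplicial homotopy type is legitimate, e.g.\ via the open-cover form of the nerve lemma or a cofinal family of finite $d_c$-dense subsets.
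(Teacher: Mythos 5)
Your overall architecture (nerve lemma for $\check{C}(M,N;r)\simeq M^r$, plus a retraction of the tube $M^r$ onto $\widetilde M$ for $r$ below the reach, plus casewise lower bounds on the reach via a focal/non-focal dichotomy) matches the paper's, which uses Theorem~\ref{cutlocus} for the dichotomy, Lemma~\ref{rauch} for the focal term, and Theorem~\ref{grad_flow} for the retraction. Your first and third terms are obtained essentially as in the paper (the Rauch focal comparison with $\sup\mathrm{sec}=2/c$, and the first-variation/midpoint analysis of a two-point cut pair giving $L_c'(M)$), and the ``Riemannian reach dichotomy'' you propose to prove by compactness is exactly the cited result of Singh and Basu--Prasad, so you could simply invoke it rather than reprove it.

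The genuine gap is in how you produce the second term $\tfrac{\sqrt{c}\,\pi}{2}$. You derive it from the claim that $\mathrm{conv}(N)=\sqrt{c}\,\mathrm{conv}(\mathbf{Gr}^+(D,d))$ equals $\tfrac{\sqrt{c}\,\pi}{2}$, i.e.\ that the convexity radius of $\mathbf{Gr}^+(D,d)$ is $\pi/2$. This is unproven, and it is not what the standard estimates give: since the sectional curvature of $\mathbf{Gr}^+(D,d)$ attains $2$ for $2\le d\le D-2$, Whitehead's bound only yields $\mathrm{conv}\ge\tfrac12\min\bigl(\mathrm{inj},\,\pi/\sqrt{2}\bigr)\le\pi/(2\sqrt2)<\pi/2$, so your nerve-lemma step, as written, only establishes the theorem on a strictly smaller range of $r$ than claimed (and the product formula $\mathrm{conv}(A\times B)=\min(\mathrm{conv}A,\mathrm{conv}B)$ also needs an argument, since balls in a product are not products of balls). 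In the paper the term $\tfrac{\sqrt{c}\,\pi}{2}$ plays an entirely different role: it is a lower bound on the reach in the non-focal case where the two distance-minimal geodesics emanate from the \emph{same} point of $M$, obtained in Theorem~\ref{inj_rad} from the conjugate radius of $\mathbf{Gr}(D,d)$ being greater than $\pi/2$ (\cite[Theorem~7.2]{bendokat2024grassmann}); the paper does not route this constant through the good-cover condition at all. You do handle the same-source case (via injectivity of $\exp_q$ on the normal fibre below $\mathrm{inj}(N)$), but your justification that $r<\mathrm{inj}(N)$ again leans on the unestablished convexity claim. To repair the proof you would need either to compute $\mathrm{conv}(\mathbf{Gr}^+(D,d))$ and $\mathrm{inj}(\mathbf{Gr}^+(D,d))$ honestly (via the conjugate radius and Klingenberg's lemma applied to the shortest orientation-preserving closed geodesic), or to decouple the good-cover condition from the constant $\tfrac{\sqrt{c}\,\pi}{2}$ and obtain that constant from the conjugate-radius argument as the paper does.
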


Fix a field $\mathbb{k}$. Since the metric $d_c$ depends both on the ambient positions and on the oriented tangent spaces of $M$, the persistence module $H_j(\mathbb{\check{C}}(M, \mathbb{R}^D \times \mathbf{Gr}_c^+(D,d));\mathbb{k})$ is not expected to be stable with respect to the Hausdorff distance on $\mathbb{R}^D$. Instead, we work with a stronger notion of convergence that also takes into account the distance between tangent spaces.
\begin{theorem}\label{stability}
  Let $\{M_i\}_{i \in \mathbb{N}}$ be a sequence of smooth closed orientable $d$-dimensional submanifolds of $\mathbb{R}^D$ converging to a smooth closed orientable submanifold $M_\infty \subset \mathbb{R}^D$ in the Whitney $C^1$ topology (see Definition~\ref{Whitney}). Fix $c>0$. Then for every field $\mathbb{k}$ and every $j \in \mathbb{N} \cup \{0\}$,
  \[
    \lim_{i \to \infty}
    d_I\Bigl(
      H_j(\mathbb{\check{C}}(M_i, \mathbb{R}^D\times\mathbf{Gr}_c^+(D,d));\mathbb{k}),
      H_j(\mathbb{\check{C}}(M_\infty, \mathbb{R}^D\times\mathbf{Gr}_c^+(D,d));\mathbb{k})
    \Bigr) = 0,
  \]
  where $d_I$ denotes the interleaving distance between persistence modules.
\end{theorem}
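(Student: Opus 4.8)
The plan is to reduce Theorem~\ref{stability} to a single uniform‑closeness estimate between the two lifted submanifolds in $Z := \mathbb{R}^D \times \mathbf{Gr}_c^+(D,d)$, and then to invoke the elementary interleaving coming from nested inclusions of ambient Čech complexes; in particular no reach or curvature hypothesis is needed, so none of Theorems~\ref{principal_curvature}, \ref{normalized_bottleneck}, \ref{length_barcodes} enters. Equip $Z$ with the product metric $d_Z$ of Definition~\ref{grassmannian_distance}. Unwinding Definition~\ref{Whitney}, Whitney $C^1$ convergence $M_i \to M_\infty$ provides, for all large $i$, embeddings $h_i \colon M_\infty \to \mathbb{R}^D$ with $h_i(M_\infty) = M_i$ such that $h_i$ and its first derivative converge, uniformly on the compact manifold $M_\infty$, to the inclusion $\iota_\infty$ and its derivative. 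For each such $i$ let $f_i \colon M_\infty \to Z$ be the graph embedding of $M_i$ pulled back along $h_i$, namely $f_i(q) = (h_i(q), P_i(q))$ with $P_i(q) \in \mathbf{Gr}_c^+(D,d)$ the oriented tangent space $T_{h_i(q)}M_i$, and let $f_\infty(q) = (q, P_\infty(q))$, $P_\infty(q) = T_q M_\infty$, be the graph embedding of $M_\infty$. These are exactly the maps whose images carry the ambient Čech complexes appearing in the statement.

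The first — and main — step is to show that $\varepsilon_i := \sup_{q \in M_\infty} d_Z\big(f_i(q), f_\infty(q)\big) \to 0$. By Definition~\ref{grassmannian_distance},
\[
 d_Z\big(f_i(q),f_\infty(q)\big)^2 = \|h_i(q)-\iota_\infty(q)\|_{\mathbb{R}^D}^2 + c\, d_{\mathbf{Gr}^+(D,d)}\big(P_i(q),P_\infty(q)\big)^2 .
\]
The first summand is uniformly small by $C^0$‑convergence. For the second, recall that the oriented tangent plane at a point is the oriented column span of the Jacobian of any local parametrization, and that the map carrying a full‑rank $D\times d$ matrix to this oriented span is continuous into $\mathbf{Gr}^+(D,d)$ (it is the projection of a smooth principal bundle). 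Since $M_\infty$ is compact, the Jacobian of $\iota_\infty$ takes values in a compact set of full‑rank matrices, so uniform convergence $Dh_i \to D\iota_\infty$ keeps the $Dh_i$ eventually in a fixed compact set of full‑rank matrices on which this projection is uniformly continuous; hence $P_i(\cdot) \to P_\infty(\cdot)$ uniformly in $d_{\mathbf{Gr}^+(D,d)}$, and the rescaling by $c$ is harmless. This is precisely where Whitney $C^1$ (rather than mere $C^0$) convergence is essential, and translating $C^1$‑closeness of parametrizations into uniform closeness of Gauss images — via the uniform full‑rank control above — is the technical heart of the argument.

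The last step, the interleaving, is now routine. Using $f_i$ and $f_\infty$ to identify the vertex sets of $\check{C}(M_i, Z; \cdot)$ and $\check{C}(M_\infty, Z; \cdot)$ with the common index set $M_\infty$, a finite subset $\sigma \subset M_\infty$ is a simplex of $\check{C}(M_\infty, Z; r)$ iff some $z \in Z$ has $d_Z(z, f_\infty(q)) < r$ for all $q \in \sigma$; then $d_Z(z, f_i(q)) \le d_Z(z,f_\infty(q)) + \varepsilon_i < r + \varepsilon_i$, so $\sigma$ is a simplex of $\check{C}(M_i, Z; r+\varepsilon_i)$, and symmetrically. Hence, as subcomplexes of the full simplex on $M_\infty$, $\check{C}(M_\infty, Z; r) \subseteq \check{C}(M_i, Z; r+\varepsilon_i) \subseteq \check{C}(M_\infty, Z; r+2\varepsilon_i)$, together with the version obtained by swapping the two families, all compatible with the internal inclusions of each filtration. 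Applying $H_j(-;\mathbb{k})$ turns these inclusions into morphisms of persistence modules whose composites are the $2\varepsilon_i$‑shift maps, i.e.\ an $\varepsilon_i$‑interleaving, so $d_I\big(H_j(\check{C}(M_i, Z;\cdot);\mathbb{k}), H_j(\check{C}(M_\infty, Z;\cdot);\mathbb{k})\big) \le \varepsilon_i$ for every $j \in \mathbb{N}\cup\{0\}$ and every field $\mathbb{k}$; letting $i \to \infty$ finishes the proof.
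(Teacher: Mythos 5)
Your proof is correct and follows the same overall strategy as the paper's: translate Whitney $C^1$ convergence into a uniform bound $\varepsilon_i \to 0$ on the $d_Z$-distance between corresponding points of the two lifted submanifolds, then convert that bound into an $\varepsilon_i$-interleaving. The execution of both halves differs, however. For the Grassmannian component the paper runs a quantitative argument: it extracts a uniform lower bound $\delta$ on the smallest singular value of the limit Jacobians, applies a perturbation bound for orthogonal projectors (Theorem~\ref{wedin}) together with the identity relating $\|P_{W_1}-P_{W_2}\|_{\operatorname{HS}}$ to the sines of the principal angles, and obtains the explicit estimate $d_{\mathbf{Gr}(D,d)}(\bar{\mathbf{g}}(F_i(q)),\bar{\mathbf{g}}(F_\infty(q))) < \tfrac{C\pi}{2\sqrt{2}\delta}\varepsilon$; it then separately shows, via an intermediate-value argument on $\det\bigl(dF_i^\top dF_i\bigr)$, that orientations can be chosen so that the oriented and unoriented Grassmannian distances coincide. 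You replace all of this with the soft observation that the projection from full-rank $D\times d$ matrices to oriented spans is continuous, hence uniformly continuous on a compact neighborhood of the limit Jacobians; this handles the orientation issue automatically (your $P_i(q)$ carries the orientation pushed forward by $h_i$, which is the same choice the paper makes explicitly) and suffices because the theorem asserts only a limit, not a rate. For the final step the paper simply notes that $\sup_q d_Z(f_i(q),f_\infty(q))$ dominates the Hausdorff distance between the images in $Z$ and cites Theorem~\ref{stability_theorem}, whereas you reprove that stability in this special case by exhibiting the nested inclusions of ambient Čech complexes induced by the vertex bijection $f_i\circ f_\infty^{-1}$. Both routes are valid; the paper's yields explicit constants (a quantitative modulus of stability in terms of the $C^1$ distance), while yours is more self-contained and avoids the external perturbation lemma.
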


Based on the above theorems, we describe a method for computing the distance matrix for a finite subset of $M$ with respect to the distance $d_c$ from Definition~\ref{grassmannian_distance}. To illustrate the performance of this method, we display three computational examples: a time-delay embedded attractor, an approximate quasi-halo orbit in the Saturn--Enceladus system, and a classification of three-dimensional image shapes.

\section{Theoretical background}

\subsection{Persistence theory}

A \emph{filtered space (or filtration)} $\mathbb{X}$ is a collection of topological spaces $\{X_i\}_{i \in \mathbb{R}_{\geq 0}}$ equipped with inclusion maps $\iota_i^j : X_i \to X_j$ for all $i \leq j$. Let \( (X, d) \) be a compact metric space and let \( \varepsilon \in \mathbb{R}_{\geq 0} \). The \emph{Vietoris--Rips complex} \( \mathrm{VR}(X;\varepsilon) \) is the abstract simplicial complex whose simplices are finite subsets \( \sigma \subset X \) such that $d(x, y) < \varepsilon$ for all $x, y \in \sigma$. For a non-decreasing sequence of parameters $\varepsilon_0 \leq \varepsilon_1 \leq \varepsilon_2 \leq \dots$, these complexes form a filtered space
\[
\mathrm{VR}(X;\varepsilon_0) \hookrightarrow \mathrm{VR}(X;\varepsilon_1) \hookrightarrow \mathrm{VR}(X;\varepsilon_2) \hookrightarrow \cdots,
\]
called the \emph{Vietoris--Rips filtration}, denoted by $\mathbb{V}R(X)$. The (intrinsic) \emph{Čech complex} $\check{C}(X;r)$ is the nerve of $r$-balls centered at points of $X$, and the corresponding \emph{Čech filtration} $\mathbb{\check{C}}(X)$ is defined analogously. Let $Y$ be a subset of a metric space $(Z,d_Z)$. The \emph{ambient Čech complex} $\check{C}(Y, Z; r)$ is defined as the nerve of the family of $r$-balls $\{B_Z(y, r)\}_{y \in Y}$ taken in the ambient space $Z$. The corresponding filtration is denoted $\mathbb{\check{C}}(Y, Z)$.

For $n \in \mathbb{N} \cup \{0\}$, a field $\mathbb{k}$ and a filtration $\mathbb{X}= \{X_i\}_{i \in \mathbb{R}_{\geq 0}}$, the \emph{$n$-th persistent homology} $ H_n(\mathbb{X}; \mathbb{k}) $ of $\mathbb{X}$ is a family of vector spaces $H_n(X_i; \mathbb{k})$ with induced linear maps $H_n(\iota_i^j) : H_n(X_i; \mathbb{k}) \to H_n(X_j; \mathbb{k})$ for $i\leq j$. If \( H_n(\mathbb{X}; \mathbb{k}) \) is pointwise finite-dimensional and decomposes as a direct sum of interval modules, then it corresponds to a multiset of points \( (b,d) \subset \mathbb{R}_{\geq 0}^2 \), called the \emph{persistence diagram} in degree \( n \). 

For a field $\mathbb{k}$, a \emph{persistence module (over $\mathbb{R}_{\geq 0}$)} $\mathbb{V}$ is a family of vector spaces $\{V_{i}\}_{i \in \mathbb{R}_{\geq 0}}$ equipped with linear maps $v_i^j :V_i \to V_j$ for every $0 \leq i \leq j$, satisfying $v_j^k \circ v_i^j = v_i^k$ whenever $i \leq j \leq k$ and $v_i^i$ is the identity map on $V_i$. A \emph{morphism of degree $\varepsilon$} between two persistence modules $\mathbb{V}$ and $\mathbb{W}$ is a family of linear maps $\Phi = \{\phi_i:V_i\to W_{i+\varepsilon}\}_{i\in\mathbb{R}_{\geq 0}}$ such that for all $i\le j$, it holds that
\[
w_{i+\varepsilon}^{j+\varepsilon}\circ \phi_i \;=\; \phi_j\circ v_i^j.
\]
For $\varepsilon\ge 0$, the \emph{shift} $\Sigma^\varepsilon$ of $\mathbb{V}$ by $\varepsilon$ is
\[
(\Sigma^\varepsilon \mathbb{V})_i=V_{i+\varepsilon},\qquad
(\Sigma^\varepsilon)(v_i^j)=v_{i+\varepsilon}^{j+\varepsilon}:V_{i+\varepsilon} \to V_{j+\varepsilon}.
\]
Two persistence modules $\mathbb{V}$ and $\mathbb{W}$ are \emph{$\varepsilon$-interleaved} if there exist morphisms $\Phi=\{\phi_i : V_i \to W_{i+\varepsilon}\}_{i \in \mathbb{R}_{\geq 0}}$ and $\Psi = \{\psi_i : W_i \to V_{i+\varepsilon}\}_{i \in \mathbb{R}_{\geq 0}}$ of degree $\varepsilon$ such that
\[
\psi_{\,i+\varepsilon}\circ \phi_i \;=\; v_{i}^{i+2\varepsilon},
\qquad
\phi_{\,i+\varepsilon}\circ \psi_i \;=\; w_{i}^{i+2\varepsilon},
\]
for every $i \in \mathbb{R}_{\geq 0}$. The \emph{interleaving distance} $d_I$ between $\mathbb{V}$ and $\mathbb{W}$ is
\[
d_I(\mathbb{V},\mathbb{W})\;=\;\inf\bigl\{\varepsilon\ge 0 \,\big|\, \mathbb{V} \text{ and } \mathbb{W} \text{ are } \varepsilon\text{-interleaved}\bigr\}.
\]
For more details, see e.g.\ \cite{oudot2015persistence, chazal2021introduction}.

\subsection{Basic Riemannian geometry}

In this subsection, we refer to \cite{do1992riemannian} and \cite{lee2018introduction}. Let $M$ be a smooth compact $n$-dimensional manifold. For a choice of a smooth section $g \in \Gamma(M; \mathrm{Sym}^2 T^*M)$, a pair $(M,g)$ is called a \emph{Riemannian manifold} if for each $p \in M$, the fiberwise bilinear map $g_p : T_pM \times T_pM \to \mathbb{R}$ is positive definite. The section $g$ is called a \emph{(Riemannian) metric} on $M$.  We call the map 
$$\nabla : \mathfrak{X}(M) \times \mathfrak{X}(M) \to \mathfrak{X}(M); \quad (X,Y) \mapsto \nabla_X Y$$
an \emph{affine connection} if $\nabla_X Y$ is linear over $C^\infty (M)$ in $X$ and linear over $\mathbb{R}$ in $Y$, satisfying $\nabla_X(fY) = f(\nabla_X Y) + (Xf)Y$ for any $f \in C^\infty(M)$. The \emph{Levi--Civita connection} is the unique affine connection $\nabla$ on $(M,g)$ that satisfies $Z(g(X,Y)) =g(\nabla_Z X, Y) + g(X,\nabla_Z Y)$ and $\nabla_X Y - \nabla_Y X = [X,Y]$ for any $X,Y,Z \in \mathfrak{X}(M).$

For a Riemannian manifold $(M,g)$ with Levi--Civita connection $\nabla$, the \emph{Riemann curvature tensor} $R$ is defined by $R(X,Y)Z = \nabla_X \nabla_Y Z - \nabla_Y \nabla_X Z- \nabla_{[X,Y]}Z$ for $X,Y,Z \in \mathfrak{X}(M)$. For a tangent $2$-plane $\Sigma_pM \subset T_pM$ with an orthonormal basis $\{u,v\}$, the \emph{sectional curvature} of $\Sigma_p M$ is defined by $K_p(u,v)=g_p(R(u,v)v,u).$

\subsection{Convergence theory}
We refer to \cite{gromov2007metric, hirsch2012differential} in this subsection. Let $(Z,d_Z)$ be a metric space and let $A,B\subset Z$ be nonempty compact subsets. The \emph{Hausdorff distance} between $A$ and $B$ is defined by
\[
d_H^Z(A,B)
=\max\Big\{\,\sup_{a\in A}\inf_{b\in B} d_Z(a,b)\;,\;
\sup_{b\in B}\inf_{a\in A} d_Z(a,b)\,\Big\}.
\]
The following stability theorem establishes the continuity of persistent homology of the ambient Čech complex with respect to the Hausdorff distance.
\begin{theorem}[{\cite[Theorem~5.6]{chazal2014persistence}}]\label{stability_theorem} Let $A,B$ be compact subsets of a metric space $Z$. Then for every $j \in \mathbb{N} \cup \{0\}$ and every field $\mathbb{k}$,
\[
d_I\bigl(H_j(\mathbb{\check{C}}(A, Z); \mathbb{k}), H_j(\mathbb{\check{C}}(B, Z); \mathbb{k})\bigr)
\leq d_H^Z(A,B).
\]
\end{theorem}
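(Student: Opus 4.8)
The plan is to prove Theorem~\ref{stability} by reducing it to a Hausdorff-convergence estimate in the \emph{fixed} ambient metric space $Z := \mathbb{R}^D \times \mathbf{Gr}_c^+(D,d)$ and then quoting Theorem~\ref{stability_theorem}. First I would unwind the notation. By definition of the ambient \v{C}ech filtration, $\check{C}(M_i,\mathbb{R}^D\times\mathbf{Gr}_c^+(D,d);r)$ is precisely $\check{C}(A_i,Z;r)$, where $A_i := (\iota_i \times \bar{\mathbf{g}}^+_{c,i})(M_i) \subset Z$ is the image of the graph embedding of $M_i$; as the continuous image of the closed manifold $M_i$, the set $A_i$ is compact, and likewise $A_\infty := (\iota_\infty \times \bar{\mathbf{g}}^+_{c,\infty})(M_\infty)$ is compact. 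Hence Theorem~\ref{stability_theorem} applies and gives, for every $j$ and every field $\mathbb{k}$,
\[
d_I\bigl(H_j(\mathbb{\check{C}}(M_i,Z);\mathbb{k}),\,H_j(\mathbb{\check{C}}(M_\infty,Z);\mathbb{k})\bigr) \;\le\; d_H^{Z}(A_i,A_\infty),
\]
so it suffices to show $d_H^Z(A_i,A_\infty)\to 0$ as $i\to\infty$.

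Next I would set up the comparison between $M_i$ and $M_\infty$ coming from Whitney $C^1$ convergence (Definition~\ref{Whitney}). Since $M_\infty$ is a fixed smooth compact submanifold, it carries a tubular neighborhood $U=\{x\in\mathbb{R}^D : d_{\mathbb{R}^D}(x,M_\infty)<\delta_0\}$ with smooth nearest-point projection $\pi:U\to M_\infty$. The $C^0$ part of the convergence gives $M_i\subset U$ for all large $i$ together with $d_H^{\mathbb{R}^D}(M_i,M_\infty)\to 0$, while the $C^1$ part gives that $T_pM_i$ is uniformly close to $T_{\pi(p)}M_\infty$, which is transverse to the normal fibers of $\pi$; hence $\pi|_{M_i}$ is a local diffeomorphism and, being injective for $i$ large (standard transversality/compactness argument), a diffeomorphism $M_i\xrightarrow{\sim}M_\infty$. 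Equivalently, $M_i$ is the normal graph over $M_\infty$ of a section $u_i$ with $\|u_i\|_{C^1}\to 0$. For $i$ large I would also take the orientation of $M_i$ to be the one pulled back from $M_\infty$ along $\pi|_{M_i}$; continuity of the oriented Gauss map forces this choice once $C^1$-closeness holds, since the two orientations of a $d$-plane are at distance bounded below by a positive constant in $\mathbf{Gr}^+(D,d)$.

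Then I would estimate $d_H^Z(A_i,A_\infty)$ directly from the product metric $d_Z\bigl((p,P),(q,Q)\bigr)=\bigl(\|p-q\|_{\mathbb{R}^D}^2 + c\,d_{\mathbf{Gr}^+(D,d)}(P,Q)^2\bigr)^{1/2}$. Given $(p,\bar{\mathbf{g}}^+(p))\in A_i$, take $q:=\pi(p)\in M_\infty$: then $\|p-q\|_{\mathbb{R}^D}=d_{\mathbb{R}^D}(p,M_\infty)\to 0$ uniformly in $p$, and the oriented tangent spaces $\bar{\mathbf{g}}^+(p)=T_pM_i$ and $\bar{\mathbf{g}}^+(q)=T_qM_\infty$ differ by an amount controlled by $\|du_i\| + \|\mathbf{II}^{M_\infty}\|\,\|u_i\|$, so their extrinsic gap (operator-norm distance of the associated projections) tends to $0$ uniformly; since $\mathbf{Gr}^+(D,d)$ is a fixed compact Riemannian manifold, its Riemannian distance is bounded above by a constant multiple of this gap on a neighborhood of the diagonal, whence $d_{\mathbf{Gr}^+(D,d)}(T_pM_i,T_qM_\infty)\to 0$ uniformly. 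Plugging both bounds into the product-metric formula shows $\sup_{a\in A_i}\inf_{b\in A_\infty}d_Z(a,b)\to 0$; running the same argument with $p:=(\pi|_{M_i})^{-1}(q)$ for $q\in M_\infty$ bounds the reverse supremum. Hence $d_H^Z(A_i,A_\infty)\to 0$, and the displayed inequality above completes the proof.

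The step I expect to be the main obstacle is the uniform control of the Grassmannian term: one must translate the abstract Whitney $C^1$ convergence of embedded submanifolds into a quantitative, uniform-in-$p$ bound on $d_{\mathbf{Gr}^+(D,d)}(T_pM_i,T_{\pi(p)}M_\infty)$. This needs (i) realizing $M_i$ as a normal graph over $M_\infty$ and differentiating the graph map to extract the tangent plane, with error governed uniformly by $\|u_i\|_{C^1}$ and the fixed geometry of $M_\infty$; (ii) a uniform comparison, near the diagonal, between the intrinsic Riemannian distance on the oriented Grassmannian and the extrinsic gap (principal-angle) distance; and (iii) verifying that orientations match for $i$ large, so that it is the \emph{oriented}, not merely unoriented, tangent planes that converge. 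Everything else — the identification $\check{C}(M_i,\mathbb{R}^D\times\mathbf{Gr}_c^+(D,d);r)=\check{C}(A_i,Z;r)$, compactness of $A_i$, and the final appeal to Theorem~\ref{stability_theorem} — is routine.
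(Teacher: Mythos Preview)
Your proposal does not address the stated theorem. Theorem~\ref{stability_theorem} is the classical \v{C}ech-stability inequality of Chazal et al., which the paper simply \emph{cites} as background and does not prove. What you have written is a proof sketch for Theorem~\ref{stability}, the paper's own result that Whitney $C^1$ convergence of submanifolds implies interleaving convergence of the associated ambient \v{C}ech persistence modules. These are different statements: Theorem~\ref{stability_theorem} is the tool, Theorem~\ref{stability} is the application, and your very first line (``The plan is to prove Theorem~\ref{stability} by \ldots\ quoting Theorem~\ref{stability_theorem}'') makes this explicit.

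If your actual target was Theorem~\ref{stability}, then your strategy agrees with the paper's: show $d_H^{Z}(A_i,A_\infty)\to 0$ for the graph images $A_i\subset Z=\mathbb{R}^D\times\mathbf{Gr}_c^+(D,d)$ and then invoke Theorem~\ref{stability_theorem}. The difference lies in how the Grassmannian term is controlled. You propose a tubular-neighborhood argument, writing $M_i$ as a normal graph over $M_\infty$ with section $u_i$ satisfying $\|u_i\|_{C^1}\to 0$, and bounding the tangent-plane deviation by $\|du_i\|+\|\mathbf{II}^{M_\infty}\|\,\|u_i\|$. The paper instead works directly in finitely many coordinate charts, applies Stewart's projector-perturbation bound (Theorem~\ref{wedin}) to the Jacobians $d(\psi_\alpha\circ F_i\circ\varphi_\alpha^{-1})$ to get a principal-angle estimate, and then uses $\theta_j\le\tfrac{\pi}{2}\sin\theta_j$ to pass to the geodesic distance on $\mathbf{Gr}^+(D,d)$; the orientation compatibility is handled by an intermediate-value argument on the determinant rather than by pulling back orientations along a projection. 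Both routes are correct. Yours is more geometric but requires setting up the normal-graph parametrization and a bi-Lipschitz comparison of intrinsic and extrinsic Grassmannian distances near the diagonal; the paper's is more elementary, trading these for a quoted matrix-perturbation lemma.
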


Let $(N,g_N)$ be a fixed Riemannian manifold, and let $\{M_i\}_{i\in\mathbb{N}}$ be a family of smooth closed submanifolds of $N$ such that $M_i = F_i(M)$ for every $i \in \mathbb{N}$, where $M$ is a fixed smooth closed manifold and each $F_i : M \to N$ is a smooth embedding.
\begin{definition}[{\cite[Chapter~2]{hirsch2012differential}}]\label{Whitney}
We say that the family of submanifolds $\{M_i\}_{i\in\mathbb{N}}$ \emph{converges to $M_\infty$ in the (Whitney) $C^k$ topology} if there exists an embedding
$$
F_\infty : M \to N, \qquad M_\infty = F_\infty(M),
$$
such that $F_i \to F_\infty$ in the Whitney $C^k$ topology on $C^k(M,N)$. That is, there exist locally finite atlases
$\{(U_\alpha,\varphi_\alpha)\}_{\alpha}$ of $M$ and $\{(V_\alpha,\psi_\alpha)\}_{\alpha}$ of $N$ with $F_\infty(U_\alpha) \subset V_\alpha$ such that, for each $\alpha$ and each $\varepsilon_\alpha > 0$, there exists $n_\alpha \in \mathbb{N}$ with the property that for every $i \geq n_\alpha$,
\[
\max_{|\beta|\le k}
\sup_{x \in \varphi_\alpha(U_\alpha)}
\left\|
D^\beta\big(\psi_\alpha \circ F_i \circ \varphi_\alpha^{-1}\big)(x)
-
D^\beta\big(\psi_\alpha \circ F_\infty \circ \varphi_\alpha^{-1}\big)(x)
\right\|_2
< \varepsilon_\alpha.
\]
\end{definition}

\subsection{Geometry of an embedded submanifold}

Let $M \subset \mathbb{R}^D$ be a smooth closed $d$-dimensional submanifold with $d < D$. Two distinct points $q_1,q_2 \in M$ form a \emph{bottleneck} in $\mathbb{R}^D$ if the line segment $\overline{q_1 q_2}$ is orthogonal to both tangent spaces $T_{q_1}M$ and $T_{q_2}M$. The \emph{width} of such a bottleneck is defined as $\tfrac{1}{2}\|q_1-q_2\|_{\mathbb{R}^D}$. This notion should not be confused with the bottleneck distance between persistence modules 
or with bottlenecks in graph theory.

Define the (Euclidean) \emph{reach} of $M \subset \mathbb{R}^D$ by
\[
\textsf{rch}_{\mathbb{R}^D}(M)
= \sup \Bigl\{ r \geq 0 \,\Big|\,
\text{every } p \in \mathbb{R}^D \text{ with } d_{\mathbb{R}^D}(p,M)< r
\text{ admits a unique nearest point projection onto } M \Bigr\}.
\]
\begin{theorem}[{\cite[Theorem~3.4]{aamari2019estimating}, \cite[Theorem~7.8]{breiding2024metric}}]\label{reach}
    Suppose that the reach of a closed submanifold $M \subset \mathbb{R}^D$ is $\tau>0$. Then at least one of the following holds.
    \begin{itemize}
        \item There exist a point $q \in M$ and a unit-speed geodesic $\gamma : (-\varepsilon, \varepsilon) \to M$ for some $\varepsilon>0$ such that $\gamma(0)=q$ and $|\gamma''(0)| = \frac{1}{\tau}$.
        \item There exist distinct points $q_1, q_2 \in M$ forming a bottleneck of $M \subset \mathbb{R}^D$ such that $\|q_1 - q_2\|_{\mathbb{R}^D} = 2\tau$.
    \end{itemize}
    In particular, the reach $\tau$ of $M$ is realized either as the reciprocal of the normal curvature of a geodesic or as the width of a bottleneck pair.
\end{theorem}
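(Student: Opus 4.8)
The plan is to recover the dichotomy from Federer's formula for the reach. Write $\tau=\textsf{rch}_{\mathbb{R}^D}(M)$, which is positive by hypothesis and finite since $M$ is compact. Federer's formula \cite{federer1959curvature} gives
\[
  \tau=\inf\!\left\{\, r(a,b):=\frac{\|a-b\|^{2}}{2\,\operatorname{dist}\!\bigl(b-a,\,T_aM\bigr)}\;:\;a,b\in M,\ a\neq b \,\right\},
\]
where $\|\cdot\|$ is the Euclidean norm. Two facts about $r$ will be used. First, its geometric meaning: writing $u=u_{a,b}$ for the unit vector in the direction of the $(T_aM)^{\perp}$-component of $b-a$, the number $r(a,b)$ is exactly the distance from $a$, along the ray $\{a+tu:t\ge 0\}$, to the unique point of that ray equidistant from $a$ and $b$. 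Second, for a unit $v\in T_pM$ the geodesic $\gamma$ of $M$ with $\gamma(0)=p$, $\gamma'(0)=v$ has ambient acceleration $\gamma''(0)=\mathbf{II}(v,v)$, so $|\gamma''(0)|=\|\mathbf{II}(v,v)\|$; hence Case~1 of the theorem is exactly the statement $\tau=\rho(M):=\bigl(\max_{\|v\|=1}\|\mathbf{II}(v,v)\|\bigr)^{-1}$, the maximum existing by compactness of the unit tangent bundle. Letting $b\to a$ along the geodesic in direction $v$ gives $r(a,b)\to\|\mathbf{II}(v,v)\|^{-1}$, so in all cases $\tau\le\rho(M)$.

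Take a minimizing sequence $(a_n,b_n)$ for Federer's formula and pass to a subsequence with $a_n\to a_*$, $b_n\to b_*$ in $M$. If $a_*=b_*$: a Taylor expansion of the minimizing geodesic from $a_n$ to $b_n$, say $b_n=a_n+\ell_n v_n+\tfrac{\ell_n^{2}}{2}\mathbf{II}(v_n,v_n)+O(\ell_n^{3})$ with $\|v_n\|=1$ and $\ell_n\to 0$ (remainder uniform by compactness), gives $\|a_n-b_n\|^{2}=\ell_n^{2}+O(\ell_n^{4})$ and $\operatorname{dist}(b_n-a_n,T_{a_n}M)=\tfrac{\ell_n^{2}}{2}\|\mathbf{II}(v_n,v_n)\|+O(\ell_n^{3})$, hence, along a further subsequence with $v_n\to v_*$, $r(a_n,b_n)\to\|\mathbf{II}(v_*,v_*)\|^{-1}$; since this limit is $\tau$ and also $\ge\rho(M)\ge\tau$, we get $\|\mathbf{II}(v_*,v_*)\|=\tau^{-1}$, and the geodesic through $a_*$ with velocity $v_*$ realizes the first alternative. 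If $a_*\neq b_*$, then $\operatorname{dist}(b_*-a_*,T_{a_*}M)>0$ (otherwise $r(a_n,b_n)\to\infty$), so $r$ is smooth near $(a_*,b_*)$ on $M\times M$ off the diagonal and attains its infimum $\tau$ there.

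In this last case, decompose $b_*-a_*=p+nu$ with $p\in T_{a_*}M$, $u\in(T_{a_*}M)^{\perp}$ a unit vector and $n>0$, so $\tau=r(a_*,b_*)=\tfrac{\|p\|^{2}+n^{2}}{2n}$. Stationarity of $a\mapsto r(a,b_*)$ in directions $X\in T_{a_*}M$ requires differentiating the norm of the orthogonal projection $P_{(T_aM)^{\perp}}(b_*-a)$ at $a_*$; the motion of the normal space contributes the shape operator $S_u$ (defined by $\langle S_uX,Y\rangle=\langle\mathbf{II}(X,Y),u\rangle$), and the computation reduces the stationarity condition to $S_up=\tfrac1\tau p$. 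If $p\neq 0$, then $\hat p:=p/\|p\|$ satisfies $\langle\mathbf{II}(\hat p,\hat p),u\rangle=\tfrac1\tau$, so $\|\mathbf{II}(\hat p,\hat p)\|\ge\tfrac1\tau$; together with $\|\mathbf{II}(\hat p,\hat p)\|\le\rho(M)^{-1}\le\tau^{-1}$ this forces $\|\mathbf{II}(\hat p,\hat p)\|=\tfrac1\tau$, and the geodesic through $a_*$ with velocity $\hat p$ realizes the first alternative. If $p=0$, then $b_*-a_*=nu\perp T_{a_*}M$ and $n=2\tau$; stationarity of $b\mapsto r(a_*,b)$ then gives $P_{T_{b_*}M}(b_*-a_*)=\tau\,P_{T_{b_*}M}(u)$, i.e.\ $(n-\tau)\,P_{T_{b_*}M}(u)=0$, so $P_{T_{b_*}M}(u)=0$ since $n=2\tau>\tau$; hence $b_*-a_*$ is orthogonal to both $T_{a_*}M$ and $T_{b_*}M$ and $\|a_*-b_*\|=2\tau$, which is the second alternative. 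In every case one alternative holds, and the final sentence of the theorem is just a restatement.

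\textbf{Expected main obstacle.} The only genuinely nontrivial step is the first variation of $a\mapsto\|P_{(T_aM)^{\perp}}(b_*-a)\|$, in which $a$ enters both through the translation $b_*-a$ and through the rotating subspace $(T_aM)^{\perp}$; one has to show that, after using that $\tfrac{d}{ds}P_{(T_{a(s)}M)^{\perp}}$ is off-diagonal at $s=0$ (it sends a normal $\xi$ to $-S_\xi X$ and carries the normal component $\mathbf{II}(X,p)$ on a tangential input $p$), the Euler--Lagrange equation collapses to the clean relation $S_up=\tfrac1\tau p$ rather than to a tilted variant. The remaining ingredients --- finiteness of $\tau$, uniformity of the Taylor remainder, and the compactness selections --- are routine for a smooth closed $M$. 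Alternatively, one may simply cite Federer's derivation of his reach formula \cite{federer1959curvature} and carry out only the two compactness arguments above, as in \cite{aamari2019estimating, breiding2024metric}.
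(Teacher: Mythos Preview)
The paper does not give its own proof of this statement; it is quoted as a background result from \cite{aamari2019estimating} and \cite{breiding2024metric} and is used only as input to the subsequent arguments. Your proposal, based on Federer's reach formula together with a compactness argument on $M\times M$ and a first-variation analysis of $r(a,b)$ at the minimizer, is correct and is exactly the route taken in those references: the diagonal limit recovers the curvature case via a second-order Taylor expansion of the geodesic, while at an off-diagonal minimizer the stationarity in $a$ yields $S_u p=\tfrac{1}{\tau}p$ (so $p\neq 0$ forces the curvature alternative) and, when $p=0$, stationarity in $b$ gives the bottleneck alternative with $\|a_*-b_*\|=2\tau$. Your identification of the derivative of the moving orthogonal projector as the only delicate step is accurate, and the computation you outline does collapse to the clean eigenvector relation as claimed.
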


Suppose a $d$-dimensional smooth compact manifold $M$ is embedded in a closed Riemannian manifold $(N,g)$. Denote the Levi--Civita connection of $N$ by $\nabla$ and the normal bundle of $M$ by $\nu$. The \emph{second fundamental form} of $M$ is defined to be the tensor $\mathbf{II} : \mathfrak{X}(M) \times \mathfrak{X}(M) \to \Gamma(\nu)$ given by $\mathbf{II}(X,Y) = (\nabla_X Y)^{\perp}$, where the map $(\cdot)^{\perp}$ denotes the orthogonal projection onto $\nu$. The \emph{normal exponential map} $\exp_\nu : \nu \to N$ is defined by
$$ \exp_\nu(q,v) = \exp_q(v),\quad (q,v) \in \nu.$$
For $p \in M$ and a nonzero vector $v \in T_pM$, the \emph{(norm of the) normal curvature} $\kappa(v)$ at $p$ in the direction $v$ is 
\[
\kappa(v)
=
\sup_{\substack{w \in \nu_p \\ \|w\|_2=1}}
\frac{g_p\bigl(\mathbf{II}_p(v,v),\, w\bigr)}{\|v\|_2^2}.
\]
The \emph{operator norm} (or \emph{2-norm}) of $\mathbf{II}_p$ is
\[
\|\mathbf{II}_p\|_2
=
\sup_{\substack{v \in T_pM \\ \|v\|_2=1}}
\kappa(v)
=
\sup_{\substack{v \in T_pM \\ \|v\|_2=1}}
\bigl\|\mathbf{II}_p(v,v)\bigr\|_2.
\]
Denote $\|\mathbf{II}\|_2 = \sup_{p \in M} \|\mathbf{II}_p\|_2$. For two linear operators $A_p,B_p : T_pM \to \nu_p$ and an orthonormal basis $\{e_i\}_{i=1}^d$ of $T_pM$, the \emph{Hilbert--Schmidt inner product} between them is 
$$\langle A_p, B_p \rangle_{\operatorname{HS}} =\sum_{i=1}^d g_p\bigl(A_p(e_i), B_p(e_i)\bigr).$$
Denote by $\|\cdot\|_{\operatorname{HS}}$ the induced norm.

\begin{definition}[{\cite[Definition~2.5]{prasad2023cut}, \cite[Definitions~12 and 13]{attali2022tight}}]
Let $S$ be a closed subset embedded in a closed Riemannian manifold $N$. Denote the distance function on $N$ by $d_N$ and the length of a curve $\gamma \subset N$ by $\mathrm{len}_N(\gamma)$. We define a geodesic $\gamma : [0,T] \to N$ to be ($S$-)\emph{distance-minimal} if $\mathrm{len}_N(\gamma|_{[0,t]}) = d_N(S,\gamma(t))$ for all $t \in [0,T]$. The \emph{cut locus} of $S$, denoted by $\mathsf{Cu}_N(S)$, is the set of points $p \in N$ for which there exists a distance-minimal geodesic $\gamma$ from $S$ to $p$ such that any extension of $\gamma$ beyond its endpoint $p$ is not distance-minimal. The \emph{(cut locus) reach} (or \emph{normal injectivity radius}) of $S$ in $N$ is
$$\mathsf{rch}_N(S) = \inf\{ d_N(p,q) \mid q \in S,\ p \in \mathsf{Cu}_N(S)\}.$$
\end{definition}
If $S$ is a smooth closed submanifold, then $\mathsf{rch}_N(S)$ is the supremum of all $\varepsilon>0$ such that the restriction of the normal exponential map
\[
\exp_\nu : \{(p,v)\in \nu \mid \|v\|_2<\varepsilon\} \longrightarrow N
\]
is an embedding. We give an analogue of Theorem~\ref{reach} below.
\begin{theorem}[{\cite[Section~2]{singh1988closest}, \cite[Lemma~A.2]{basu2023connection}}]\label{cutlocus}
    Let $M$ be a smooth closed submanifold of a Riemannian manifold $(N,g)$, and suppose $\mathsf{rch}_N(M)=T>0$. Denote the normal bundle of $M$ by $\nu$ and its unit normal bundle by $S(\nu)$. Then one of the following holds (see Figure~\ref{cutlocus_figure}):
    \begin{itemize}
        \item[\textnormal{(focal)}] There exists a pair $(p,v) \in S(\nu)$ such that the differential of the normal exponential map $d(\exp_\nu)_{(p,Tv)}$ is not of full rank, whereas $d(\exp_\nu)_{(p',tv')}$ has full rank for every $(p',v') \in S(\nu)$ and every $t$ with $0 <t<T$. In this case, the point $\exp_\nu(Tv)$ is called a \emph{first focal point} and $T$ a \emph{first focal time}.
        \item[\textnormal{(non-focal)}] There exist at least two distinct unit-speed distance-minimal geodesics $\gamma_1, \gamma_2 : [0,T] \to N$ from (not necessarily distinct) two points in $M$ satisfying $\gamma_1(T) = \gamma_2(T)$ and $ \gamma_i'(0) \perp M$ for $i=1,2$.
    \end{itemize}
\end{theorem}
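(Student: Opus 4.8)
The plan is to mirror the proof of Theorem~\ref{reach}, but in a general (complete) Riemannian ambient space $N$, replacing the Euclidean bottleneck alternative by the alternative of two distance-minimal geodesics sharing an endpoint: I would realize the normal injectivity radius $T = \mathsf{rch}_N(M)$ by a compactness argument and then invoke (and, if needed, reprove) the classical cut-point dichotomy at the realizing configuration. First I would note that $\mathsf{rch}_N(M) = \inf\{\, d_N(p,M) \mid p \in \mathsf{Cu}_N(M)\,\} = T$, pick $p_k \in \mathsf{Cu}_N(M)$ with $d_N(p_k,M) \to T$, and for each $k$ a unit-speed distance-minimal geodesic $\gamma_k : [0,\ell_k] \to N$ from $q_k := \gamma_k(0) \in M$ to $p_k$ with $\ell_k = d_N(p_k,M)$, $\gamma_k'(0) \perp M$, and no distance-minimal extension past $p_k$ (such a $\gamma_k$ exists by the definition of $\mathsf{Cu}_N(M)$). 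Since $M$ is a closed manifold and the normal unit bundle $S(\nu)$ is compact and $\ell_k \to T$, I would pass to a subsequence with $q_k \to q_\infty \in M$ and $\gamma_k'(0) \to v \in S(\nu)_{q_\infty}$, so that $\gamma_k \to \gamma := (t \mapsto \exp_\nu(q_\infty,tv))|_{[0,T]}$ uniformly and $p_k \to p := \gamma(T)$; continuity of $d_N(M,\cdot)$ and distance-minimality of the $\gamma_k$ then give $d_N(M,\gamma(t)) = t$ for $t \in [0,T]$, so $\gamma$ is itself distance-minimal with $d_N(p,M) = T$.

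Next I would apply the classical cut-point dichotomy to each $(p_k,\gamma_k)$: since $\gamma_k$ has no distance-minimal extension, either (a) $p_k$ is a focal point of $M$ along $\gamma_k$, i.e.\ $d(\exp_\nu)_{(q_k,\,\ell_k\gamma_k'(0))}$ is singular, or (b) there is a distance-minimal geodesic $\tilde\gamma_k \neq \gamma_k$ from $M$ to $p_k$, necessarily of length $\ell_k$ with $\tilde\gamma_k'(0) \perp M$. The argument for this classical fact is: if $p_k$ is not focal along $\gamma_k$ then $\exp_\nu$ is a local diffeomorphism near $(q_k,\ell_k\gamma_k'(0))$, and choosing distance-minimal geodesics from $M$ to $\gamma_k(\ell_k+\delta)$ and letting $\delta \downarrow 0$ yields in the limit a distance-minimal geodesic to $p_k$ that must differ from $\gamma_k$, since otherwise $\gamma_k(\ell_k+\delta)$ would have two distinct preimages lying where $\exp_\nu$ is injective. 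I would then pass to the limit: if (a) holds along a subsequence, continuity of $(q,w) \mapsto d(\exp_\nu)_{(q,w)}$ makes $d(\exp_\nu)_{(q_\infty,Tv)}$ singular, so $p$ is a focal point along $\gamma$; if (b) holds for all large $k$, I would extract $\tilde\gamma_k \to \tilde\gamma$, a distance-minimal geodesic from $M$ to $p$ with $\tilde\gamma'(0) \perp M$, and note that if $\tilde\gamma \neq \gamma$ we have the non-focal alternative, whereas if $\tilde\gamma = \gamma$ then the distinct points $(q_k,\ell_k\gamma_k'(0))$ and $(\tilde q_k,\ell_k\tilde\gamma_k'(0))$ of $\nu$ both converge to $(q_\infty,Tv)$ and have common image $p_k$ under $\exp_\nu$, which is impossible unless $p$ is focal along $\gamma$ (otherwise $\exp_\nu$ is injective near $(q_\infty,Tv)$). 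So in every case $p$ is a focal point of $M$ along $\gamma$, or the common endpoint of two distinct unit-speed distance-minimal geodesics of length $T$ with initial velocities normal to $M$.

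Finally, in the non-focal case the two geodesics produced above are precisely the data demanded by the (non-focal) clause, so it remains only to upgrade the focal case to a \emph{first} focal point: using the stated description of $\mathsf{rch}_N(M) = T$ as the supremum of $\varepsilon$ for which $\exp_\nu|_{\{\|w\|<\varepsilon\}}$ is an embedding, that restriction is in particular an immersion for every $\varepsilon < T$, hence $d(\exp_\nu)_{(p',tv')}$ has full rank for all $(p',v') \in S(\nu)$ and $0 < t < T$, which completes the (focal) clause. I expect the main obstacle to be the coincidence case in the second step: the auxiliary geodesic $\tilde\gamma_k$ furnished by the cut-point dichotomy can collapse onto $\gamma$ in the limit, so the focal and non-focal branches cannot be decided at the level of the approximating cut points and must be treated jointly, by playing the local injectivity of $\exp_\nu$ at a non-focal point against the two distinct preimages of $p_k$. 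A secondary point needing care is that a uniform limit of distance-minimal geodesics issuing orthogonally from $M$ is again such a geodesic, which follows from continuity of $\exp$, of $d_N(M,\cdot)$, and of the endpoint map, but should be stated explicitly; I would also record at the outset that $N$ is assumed complete, so that Hopf--Rinow guarantees the existence of the distance-minimal geodesics used throughout.
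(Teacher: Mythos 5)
Your argument is correct, but note that the paper offers no proof of Theorem~\ref{cutlocus} to compare against: the statement is imported by citation from \cite{singh1988closest} and \cite{basu2023connection}, so what you have written is a self-contained reconstruction of the cited result. Your route is the standard Klingenberg-type argument for submanifolds: realize $T$ by extracting a convergent subsequence of initial data in the compact unit normal bundle $S(\nu)$, apply the cut-point dichotomy at each approximating cut point $p_k$ (non-focal at $(q_k,\ell_k\gamma_k'(0))$ implies a second minimizing normal geodesic, via the two-preimages-versus-local-injectivity argument), and pass to the limit, correctly identifying the delicate sub-case where the auxiliary geodesics $\tilde\gamma_k$ collapse onto $\gamma$ and resolving it by the same injectivity argument at $(q_\infty,Tv)$. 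Two points are worth making explicit in a write-up. First, completeness of $N$ (needed for Hopf--Rinow and for the existence of the minimizing geodesics $\sigma_\delta$ to $\gamma_k(\ell_k+\delta)$) is available because the paper's definition of $\mathsf{Cu}_N(S)$ already assumes $N$ closed. Second, your derivation of the ``first focal time'' clause leans on the identification of $\mathsf{rch}_N(M)$ with the normal injectivity radius (the supremum of $\varepsilon$ for which $\exp_\nu$ restricted to $\{\|v\|_2<\varepsilon\}$ is an embedding), which the paper asserts immediately after the definition but likewise does not prove; if you wish to avoid that dependency, the full-rank assertion for $0<t<T$ follows instead from the second-variation fact that a normal geodesic is not distance-minimal past its first focal point, since a singular value of $d(\exp_\nu)$ at radius $t_0<T$ would force a cut point of $M$ at distance at most $t_0$ from $M$, contradicting $\mathsf{rch}_N(M)=T$.
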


\begin{figure}[ht]
\begin{center}
\begin{tikzpicture}[scale=1.5]

  \begin{scope}[yshift=-0.8cm]
    \draw[thick,black,domain=-1.1:1.1,samples=200] plot (\x,{\x*\x});
    \fill[red] (0,0.5) circle(0.03);
    \fill[blue] (0,0) circle(0.03);
    \draw[gray,->] (0,0) -- (0,0.5);

    \pgfmathsetmacro{\aone}{0.12}
    \pgfmathsetmacro{\atwo}{-0.10}
    \pgfmathsetmacro{\xone}{\aone}
    \pgfmathsetmacro{\yone}{\aone*\aone}
    \pgfmathsetmacro{\xtwo}{\atwo}
    \pgfmathsetmacro{\ytwo}{\atwo*\atwo}

    \fill[blue] (\xone,\yone) circle(0.03);
    \fill[blue] (\xtwo,\ytwo) circle(0.03);
    \draw[gray,->] (\xone,\yone) -- (0,0.5);
    \draw[gray,->] (\xtwo,\ytwo) -- (0,0.5);
  \end{scope}

  % ===== CENTER: S^2 scene =====
  \begin{scope}[xshift=3.2cm]
    \def\az{25}\def\el{35}
    \pgfmathsetmacro{\caz}{cos(\az)} \pgfmathsetmacro{\saz}{sin(\az)}
    \pgfmathsetmacro{\cel}{cos(\el)} \pgfmathsetmacro{\sel}{sin(\el)}

    \draw[lightgray] (0,0) circle (1);

    \def\ang{30}\def\hw{16}

    \pgfmathsetmacro{\cang}{cos(\ang)} \pgfmathsetmacro{\sang}{sin(\ang)}
    \pgfmathsetmacro{\PX}{\cang*\caz - \sang*\saz}
    \pgfmathsetmacro{\PY}{(\cang*\saz + \sang*\caz)*\cel}
    \pgfmathsetmacro{\QX}{-\PX} \pgfmathsetmacro{\QY}{-\PY}
    \fill[blue] (\PX,\PY) circle (0.03);
    \fill[red]  (\QX,\QY) circle (0.03);

    \draw[gray,domain=0:360,samples=360,variable=\x]
      plot ({cos(\x)*(\cang*\caz - \sang*\saz)},
            {cos(\x)*(\cang*\saz + \sang*\caz)*\cel - sin(\x)*\sel});

    \draw[gray,->]
      ({cos(70)*(\cang*\caz - \sang*\saz)},
       {cos(70)*(\cang*\saz + \sang*\caz)*\cel - sin(70)*\sel})
      --
      ({cos(100)*(\cang*\caz - \sang*\saz)},
       {cos(100)*(\cang*\saz + \sang*\caz)*\cel - sin(100)*\sel});

    \draw[gray,->]
      ({cos(270)*(\cang*\caz - \sang*\saz)},
       {cos(270)*(\cang*\saz + \sang*\caz)*\cel - sin(270)*\sel})
      --
      ({cos(240)*(\cang*\caz - \sang*\saz)},
       {cos(240)*(\cang*\saz + \sang*\caz)*\cel - sin(240)*\sel});
  \end{scope}

  % ===== RIGHT: Hyperbola =====
  \begin{scope}[xshift=6.4cm,scale=0.7]
    \draw[thick,black,domain=-1.3:1.3,samples=300]
      plot ({ sqrt(1+\x*\x) },{\x});
    \draw[thick,black,domain=-1.3:1.3,samples=300]
      plot ({-sqrt(1+\x*\x) },{\x});

    \fill[blue] ( 1,0) circle(0.03);
    \fill[blue] (-1,0) circle(0.03);
    \fill[red] (0,0) circle(0.03);

    \draw[gray,->] ( 1,0) -- (0,0);
    \draw[gray,->] (-1,0) -- (0,0);
  \end{scope}
\end{tikzpicture}
\end{center}
\caption{Examples of cut loci (in red): 
a first focal point caused by large normal curvature (left), 
a non-focal cut locus arising from two distance-minimal geodesics from the same point (middle), 
and a non-focal cut locus arising from two distinct points (right).}
\label{cutlocus_figure}
\end{figure}
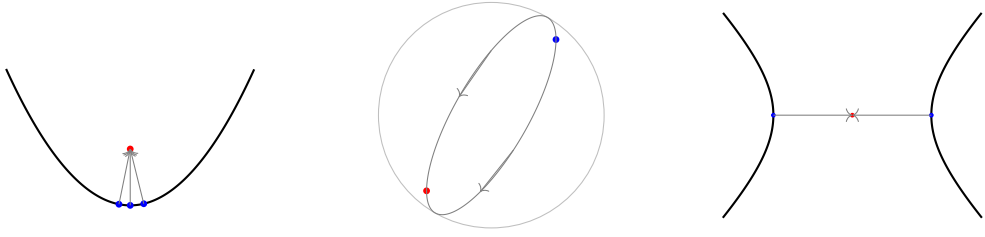
We will establish a lower bound on the reach of the embedding $(M,g_c) \subset \mathbb{R}^D \times \mathbf{Gr}_c^+(D,d)$ by deriving casewise lower bounds. It provides a lower bound on the radii $r$ from which the ambient Čech complex $\check{C}(M, \mathbb{R}^D \times \mathbf{Gr}_c^+(D,d); r)$ or the Vietoris--Rips complex $\mathrm{VR}(\mathbf{Y}, \mathbb{R}^D \times \mathbf{Gr}_c^+(D,d); r)$ for a finite subset $\mathbf{Y} \subset M$ is homotopy equivalent to $M$.

\begin{theorem}[{\cite[Theorem~A]{basu2023connection}}]\label{grad_flow}
Denote the function $f = d_N(\cdot, M)^2 : N \to \mathbb{R}$. Then $f$ is a Morse--Bott function on $N \setminus \mathsf{Cu}_N(M)$ with critical manifold $M$, and the set $N \setminus \mathsf{Cu}_N(M)$ deformation retracts onto $M$ via the gradient flow of $f|_{N \setminus \mathsf{Cu}_N(M)}$.
\end{theorem}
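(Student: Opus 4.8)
The plan is to pull both assertions back to the normal bundle $\nu$ of $M$ through the normal exponential map $\exp_\nu$, where they reduce to statements about the squared fiber-norm function. The first step is to recall the submanifold cut-locus theory behind Theorem~\ref{cutlocus}: letting $\mathsf{cut}\colon S(\nu)\to(0,\infty]$ send $(p,v)$ to the supremum of times $t$ for which $s\mapsto\exp_p(sv)$ is distance-minimal from $M$ on $[0,t]$, this function is continuous, and I would show that the fiberwise star-shaped open set
\[
  \widetilde U \;=\; \bigl\{\, tv \;:\; (p,v)\in S(\nu),\ 0\le t<\mathsf{cut}(p,v)\,\bigr\}\subset\nu
\]
is carried by $\exp_\nu$ diffeomorphically onto $N\setminus\mathsf{Cu}_N(M)$. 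The ingredients are: $d(\exp_\nu)$ has full rank on $\widetilde U$, because the first focal time along each normal geodesic is no smaller than its cut time; $\exp_\nu|_{\widetilde U}$ is injective, because two distinct minimal geodesics from $M$ to a common point force that point into $\mathsf{Cu}_N(M)$; and $\exp_\nu(\widetilde U)=N\setminus\mathsf{Cu}_N(M)$, because any $x\notin\mathsf{Cu}_N(M)$ is the endpoint of a minimal normal geodesic whose length is strictly less than its cut time (otherwise $x$ would itself be a cut point). I expect assembling this classical material into the clean diffeomorphism statement to be the main technical load of the proof; everything after it is a computation.

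Write $\Phi:=\exp_\nu|_{\widetilde U}\colon\widetilde U\xrightarrow{\ \cong\ }N\setminus\mathsf{Cu}_N(M)$. For $(p,v)\in\widetilde U$ the radial geodesic is distance-minimal up to parameter $\|v\|$, so $f(\Phi(p,v))=d_N(\exp_p v,M)^2=\|v\|^2=:q(p,v)$, the squared fiber-norm function on $\widetilde U$, which is smooth in $(p,v)$; hence $f|_{N\setminus\mathsf{Cu}_N(M)}=q\circ\Phi^{-1}$ is smooth, and since being Morse--Bott is a diffeomorphism invariant it suffices to verify it for $q$ on $\widetilde U$. The only critical points of $q$ lie on the zero section, and at a zero-section point $p$ the intrinsic Hessian (whose value at a critical point is independent of the connection) vanishes on $T_pM$ because $q\equiv0$ there, equals $2\langle\cdot,\cdot\rangle$ on $\nu_p$ because $s\mapsto q(p,sw)=s^2$ for unit $w\in\nu_p$, and has vanishing mixed block; so $\operatorname{Hess}_p q$ is positive semidefinite with kernel exactly $T_pM$, which is the Morse--Bott condition (index $0$). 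Transporting back along $\Phi$ yields the first assertion, with critical manifold $\Phi(\text{zero section})=M$.

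For the deformation retraction I would use the classical gradient identity $\nabla f(x)=2\,d_N(x,M)\,\gamma'(d_N(x,M))$ for $x\notin M\cup\mathsf{Cu}_N(M)$, where $\gamma$ is the unit-speed minimal normal geodesic from $M$ to $x$ (equivalently, $r:=d_N(\cdot,M)$ is a smooth distance function with $\|\nabla r\|\equiv1$ and geodesic integral curves, and $\nabla f=2r\nabla r$). Under $\Phi$ the negative gradient flow of $f$ therefore fixes the base point $p$ and contracts the fiber coordinate by $\dot\ell=-2\ell$, so $\ell(s)=\ell(0)e^{-2s}$; since $\widetilde U$ is fiberwise star-shaped, the flow stays in $\widetilde U$ for all $s\ge0$, is complete, and every trajectory converges to its base point on $M$. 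Reparametrizing time by $t=1-e^{-2s}$ and setting $H(\Phi(p,v),t)=\Phi\bigl(p,(1-t)v\bigr)$ gives, in $\widetilde U$-coordinates, the manifestly continuous map $(p,v,t)\mapsto(p,(1-t)v)$ on $\widetilde U\times[0,1]$, which is the identity at $t=0$, the nearest-point projection onto $M$ at $t=1$, and fixes $M$ throughout. This is the required strong deformation retraction of $N\setminus\mathsf{Cu}_N(M)$ onto $M$ realized by the (reparametrized) negative gradient flow of $f$, completing the proof.
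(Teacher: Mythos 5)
The paper does not prove this statement at all: it is quoted verbatim as Theorem~A of the cited reference \cite{basu2023connection}, so there is no internal proof to compare against. Judged on its own terms, your argument is the standard one and is essentially correct: pull everything back through $\exp_\nu$ to the star-shaped domain $\widetilde U$ below the cut time, observe $f\circ\exp_\nu=\|v\|^2$ there, read off the Morse--Bott property from the Hessian of the squared fiber norm at the zero section, and compactify the reparametrized negative gradient flow into the linear retraction $(p,v,t)\mapsto(p,(1-t)v)$. The only caveat is the one you already flag yourself: the real technical content sits in the classical cut-locus package (continuity of the cut-time function on $S(\nu)$ for a closed submanifold of a closed manifold, $\mathsf{cut}\le t_{\mathrm{foc}}$, the shortcut argument showing that two distinct minimal normal geodesics meeting at a point force that point into $\mathsf{Cu}_N(M)$, and hence that $\exp_\nu|_{\widetilde U}$ is a diffeomorphism onto $N\setminus\mathsf{Cu}_N(M)$). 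You state these correctly but defer them; a fully self-contained write-up would need to either prove them or cite them precisely, which is presumably what \cite{basu2023connection} does.
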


\begin{theorem}[{\cite[Proposition~16]{attali2022tight}, modified}]\label{niyogi_cutlocus}
Suppose that the ambient manifold $N$ has non-negative sectional curvature. If a finite subset $\{x_i\}_{i=1}^n \subset S$ is $\varepsilon/2$-dense in $S$ for some $\varepsilon < \sqrt{6 - 4\sqrt{2}}\;\mathsf{rch}_{N}(S)$, then there exists a nonempty interval $I$ of radii $r$ such that, the set $\bigcup_{i=1}^n B_N (x_i, r)$ deformation retracts onto $S$ for every $i \in I$. Moreover, a lower bound for the length of this interval depends only on $\varepsilon$ and $\mathsf{rch}_{N}(S)$.
\end{theorem}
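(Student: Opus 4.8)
The plan is to carry out the Niyogi--Smale--Weinberger deformation-retraction argument \cite{niyogi2008finding} in the sharpened form of \cite[Proposition~16]{attali2022tight}, replacing Euclidean segments by minimizing geodesics of $N$ and the classical reach by the cut-locus reach $\mathsf{rch}_N(S)$. Write $\tau=\mathsf{rch}_N(S)$ and $U_r=\bigcup_{i=1}^n B_N(x_i,r)$. By Theorem~\ref{grad_flow} the negative gradient flow of $f=d_N(\cdot,S)^2$ furnishes a deformation retraction $H$ of $N\setminus\mathsf{Cu}_N(S)$ onto $S$, and $H(y,\cdot)$ simply runs $y$ along the unique minimizing geodesic to its nearest point $\pi_S(y)\in S$. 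Since $\{d_N(\cdot,S)<\tau\}$ is disjoint from $\mathsf{Cu}_N(S)$ by definition of $\tau$, we have $\overline{U_r}\subseteq N\setminus\mathsf{Cu}_N(S)$ as soon as $r<\tau$, so $H$ is defined there. It then suffices to produce an interval of radii $r$ on which (i) $S\subseteq U_r$, which holds once $r>\varepsilon/2$ by the density hypothesis and yields $\pi_S(U_r)=S$; and (ii) $U_r$ is invariant under $H$, i.e. for every $y\in U_r$ the minimizing geodesic from $y$ to $\pi_S(y)$ stays inside $U_r$. Granting (i)--(ii), $H$ restricts to a deformation retraction of $U_r$ onto $S$.

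Condition (ii) is the crux. Fix $y\in B_N(x_i,r)$, set $p=\pi_S(y)$ (so $d_N(y,p)=d_N(y,S)\le d_N(y,x_i)<r$), and pick a sample point $x_j$ with $d_N(p,x_j)\le\varepsilon/2$. Along the minimizing geodesic $\sigma$ from $y$ to $p$ one has $\pi_S(\sigma(s))=p$ for all $s$ and $d_N(\sigma(s),S)=(1-s)\,d_N(y,p)$, so $\sigma(s)$ lies within $(1-s)\,d_N(y,p)+\varepsilon/2$ of $x_j$ and, near $s=0$, inside $B_N(x_i,r)$; the claim is that $B_N(x_i,r)\cup B_N(x_j,r)$ already covers $\sigma([0,1])$. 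A bare triangle inequality does not close the middle of the geodesic: one must invoke the reach bound stating that a point of $S$ at geodesic distance $\ell<2\tau$ from $p$ subtends an angle at most $\arcsin(\ell/(2\tau))$ with the tangent space $T_pS$ (the Riemannian form of ``a point of $S$ near $p$ deviates from $T_pS$ by $O(\ell^2/\tau)$''), which bounds the angle at $p$ of the relevant geodesic triangles and hence, by the law of cosines, the distances $d_N(\sigma(s),x_i)$ and $d_N(\sigma(s),x_j)$. Optimizing over the admissible configurations converts ``$y$ is $r$-close to $x_i$'' into ``$\sigma(s)$ is $r$-close to $x_i$ or to $x_j$ for all $s$'' precisely when $r$ lies in an interval $\bigl(r_-(\varepsilon,\tau),\,r_+(\varepsilon,\tau)\bigr)$, and this interval is nonempty exactly when $\varepsilon<(2-\sqrt2)\,\tau=\sqrt{6-4\sqrt2}\,\tau$; its length is then an explicit function of $\varepsilon$ and $\tau$ alone, which gives the last sentence of the statement.

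The one genuinely new ingredient relative to the Euclidean proof of \cite{attali2022tight} is that $N$ is only assumed to have non-negative sectional curvature. Here Toponogov's hinge comparison does the work: for $K\ge 0$, a geodesic hinge of side lengths $a,b$ and included angle $\alpha$ has its far endpoints at distance at most $\sqrt{a^2+b^2-2ab\cos\alpha}$, which is exactly the favorable direction --- every distance entering the ball-covering estimate is bounded by its Euclidean comparison value, so the planar estimates of \cite{attali2022tight} transfer after this reduction. Non-negativity of the curvature is also used to re-derive the reach bound intrinsically in $N$, via the second-variation estimate underlying $\mathsf{rch}_N(S)$ together with Jacobi-field comparison. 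I expect this comparison-geometry bookkeeping --- correctly replacing every Euclidean inner product, segment, and reach estimate in the Attali--Boissonnat--Lieutier argument by its Riemannian analogue and checking that each bound points the right way --- to be the main obstacle; once it is in place, the optimization producing the constant $\sqrt{6-4\sqrt2}$ and the explicit length of the interval of radii is unchanged.
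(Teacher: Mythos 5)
The paper does not actually prove this statement: it is imported verbatim (with ``modified'' in the bracket) from Attali--Boissonnat--Lieutier, and the only ingredient the paper itself supplies is Theorem~\ref{grad_flow}, which it uses exactly as you do, to replace the Euclidean nearest-point retraction by the gradient flow of $d_N(\cdot,S)^2$ on $N\setminus\mathsf{Cu}_N(S)$. So there is no in-paper argument to compare against; your outline is the natural (and, as far as I can tell, the intended) route, and your identification of the two correct comparison inputs is sound: for $K\ge 0$ the Toponogov hinge inequality bounds distances \emph{above} by their Euclidean law-of-cosines values, and the Federer-type angle bound transfers because the geodesic ball $B_N(\exp_p(t\mathbf{n}),t)$, $t<\tau$, is disjoint from $S$ within the cut-locus reach, so the same hinge comparison at $p$ gives $\cos\alpha\le d_N(p,q)/(2\tau)$ for the angle $\alpha$ between $\mathbf{n}$ and the initial direction of the geodesic $pq$. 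Your arithmetic $(2-\sqrt2)^2=6-4\sqrt2$ matching the stated constant is also right.

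That said, what you have written is a plan rather than a proof, and the one step you yourself flag as ``the crux'' --- flow-invariance of $U_r=\bigcup_i B_N(x_i,r)$ --- is precisely where the argument can silently fail. Two points need genuine verification rather than assertion. First, the comparison inequalities do not all trivially ``point the right way'': Toponogov for $K\ge0$ makes angles \emph{larger} than their Euclidean comparison angles, so whenever the ABL argument needs an \emph{upper} bound on an angle of a geodesic triangle (rather than on a side length), you must re-derive it from the reach bound directly rather than from triangle comparison; you should check each such occurrence explicitly. Second, along the normal geodesic $\sigma$ from $y$ to $p=\pi_S(y)$ you use both the hinge at $p$ between $\sigma$ and the geodesic $p x_j$, and the claim $d_N(\sigma(s),x_j)\le$ Euclidean comparison; combining the angle bound at $p$ (which concerns the angle with $\nu_p$) with the hinge inequality requires the spherical/Euclidean law of cosines for the \emph{three} directions $\sigma'(1)$, $\exp_p^{-1}(x_j)$, and $T_pS$ simultaneously, which is where the optimization producing $r_-(\varepsilon,\tau)$, $r_+(\varepsilon,\tau)$ and the threshold $2-\sqrt2$ actually lives. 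Until that computation is written out in the Riemannian setting, the claim that the interval is nonempty exactly for $\varepsilon<\sqrt{6-4\sqrt2}\,\tau$ and that its length depends only on $(\varepsilon,\tau)$ is borrowed from the Euclidean case on faith. Since the paper itself leans on the citation for exactly this computation, your proposal is at the same level of completeness as the paper, but it is not yet a self-contained proof.
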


\subsection{Geometry of the Grassmannian}
We mainly follow \cite{bendokat2024grassmann} and \cite{lai2025simple}. The compact manifold $\mathbf{Gr}(D,d)$, called the \emph{(real) Grassmannian}, is the space of all $d$-dimensional linear subspaces of $\mathbb{R}^D$ equipped with the quotient topology. For two elements $P, Q \in \mathbf{Gr}(D,d)$, let $A,B$ be matrices whose columns form orthonormal bases for $P$ and $Q$, respectively. Let $\sigma_1, \sigma_2, \dots, \sigma_d$ denote the singular values of $A^\top B$. The \emph{principal angles} between $P$ and $Q$ are defined by
$$(\theta_1,\dots,\theta_d) = (\arccos{\sigma_1},\dots, \arccos{\sigma_d}).$$
The Grassmannian $\mathbf{Gr}(D,d)$ carries a Riemannian metric whose induced geodesic distance is
$$d_{\mathbf{Gr}(D,d)}(P, Q) = \left(\sum_{i=1}^d \theta^2_i\right)^{\frac{1}{2}}.$$ 
Analogously, the space of $d$-dimensional oriented subspaces of $\mathbb{R}^D$ is called the \emph{(real) oriented Grassmannian}, denoted by $\mathbf{Gr}^+(D,d).$ For two oriented subspaces $P, Q \in \mathbf{Gr}^+(D,d)$ with orthonormal basis matrices $A$ and $B$, the \emph{geodesic distance} between $P$ and $Q$ is given by
\[
d_{\mathbf{Gr}^+(D,d)}(P, Q) =
\begin{cases}
\left(\displaystyle\sum_{i=1}^d \theta_i^2\right)^{\frac{1}{2}} & \text{if } \det(A^\top B) > 0,\\[0.6em]
\left(\displaystyle(\pi-\theta_1)^2 + \sum_{i=2}^{d} \theta_i^2 \right)^{\frac{1}{2}} & \text{otherwise},
\end{cases}
\]
where $\theta_1, \dots, \theta_d$ are the principal angles between $P,Q$ with $\theta_1 \geq \dots \geq \theta_d \geq 0$. 

For a smooth closed orientable $d$-dimensional submanifold $M \subset \mathbb{R}^D$, the (generalized) \emph{Gauss map} $\bar{\mathbf{g}}^+ : M \to \mathbf{Gr}^+(D,d)$ is defined by the composition
$$p \longmapsto T_pM \xrightarrow{\;\cong\;} P \in \mathbf{Gr}^+(D,d),$$
where the isomorphism $\xrightarrow{\;\cong\;}$ identifies the oriented tangent space $T_pM \subset \mathbb{R}^D$ with the corresponding $d$-dimensional oriented subspace of $\mathbb{R}^D$ translated to the origin. The \emph{unoriented Gauss map} $\bar{\mathbf{g}}: M \to \mathbf{Gr}(D,d)$ is defined in the same way, ignoring the orientation of the tangent spaces.

\section{Embedding into the oriented Grassmannian bundle}

\subsection{Bound for Riemannian volume}

\begin{lemma}[{\cite[Section~3]{lai2025simple}}]\label{grass_tangent}
    Let $M \subset \mathbb{R}^D$ be a smooth closed $d$-dimensional submanifold with normal bundle $\nu$. Denote the unoriented Gauss map by $\bar{\mathbf{g}} : M \to \mathbf{Gr}(D,d)$. For any $X,Y \in \mathfrak{X}(M)$ and $p \in M$,
    $$ d\bar{\mathbf{g}}_p(X_p)
= \mathbf{II}_p(X_p,\cdot) \in \operatorname{Hom}(T_pM, \nu_p),$$
    where $\mathbf{II}$ denotes the second fundamental form of $M \subset \mathbb{R}^D$. Moreover,
    $$g^{\mathbf{Gr}(D,d)}_{\bar{\mathbf{g}}(p)}
\bigl(d\bar{\mathbf{g}}_p(X_p), d\bar{\mathbf{g}}_p(Y_p)\bigr)
=
\langle \mathbf{II}_p(X_p,\cdot), \mathbf{II}_p(Y_p,\cdot) \rangle_{\operatorname{HS}}.$$
\end{lemma}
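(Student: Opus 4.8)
The plan is to compute the differential $d\bar{\mathbf{g}}_p$ in a local orthonormal tangent frame of $M$, using the standard identification $T_P\mathbf{Gr}(D,d)\cong\operatorname{Hom}(P,P^\perp)$. First I would recall how a tangent vector to the Grassmannian is read off from a moving frame: if $t\mapsto P(t)$ is a smooth curve of $d$-planes with $P(0)=P$ and $\{u_1(t),\dots,u_d(t)\}$ is any smooth orthonormal frame spanning $P(t)$ with $u_i(0)=u_i$, then $\dot P(0)$, viewed as an element of $\operatorname{Hom}(P,P^\perp)$, is the linear map $u_i\mapsto\pi_{P^\perp}(\dot u_i(0))$, where $\pi_{P^\perp}$ denotes orthogonal projection. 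This is independent of the chosen frame: replacing $u_i(t)$ by $\sum_j R_{ij}(t)u_j(t)$ with $R(t)\in O(d)$ and $R(0)=I$ changes $\dot u_i(0)$ only by a vector lying in $P$, which $\pi_{P^\perp}$ annihilates.

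Next, fix $p\in M$ and choose a local orthonormal tangent frame $\{e_1,\dots,e_d\}$ of $M$ on a neighborhood $U$ of $p$ (which exists without any orientability hypothesis), regarded as $\mathbb{R}^D$-valued maps. For $q\in U$ we have $\bar{\mathbf{g}}(q)=T_qM=\operatorname{span}(e_1(q),\dots,e_d(q))$, so along a curve $\gamma$ in $U$ with $\gamma(0)=p$ and $\gamma'(0)=X_p$, the frame $\{e_i(\gamma(t))\}$ spans $\bar{\mathbf{g}}(\gamma(t))$. By the previous paragraph, $d\bar{\mathbf{g}}_p(X_p)\in\operatorname{Hom}(T_pM,\nu_p)$ is the map sending $(e_i)_p$ to $\pi_{\nu_p}(D_{X_p}e_i)$, where $D$ is the flat derivative of $\mathbb{R}^D$ and $\pi_{\nu_p}$ is projection onto $\nu_p=P^\perp$. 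Since the second fundamental form of $M\subset\mathbb{R}^D$ satisfies $\mathbf{II}_p(X_p,(e_i)_p)=(D_{X_p}e_i)^\perp=\pi_{\nu_p}(D_{X_p}e_i)$, and $\mathbf{II}_p$ is bilinear, this map is exactly $\mathbf{II}_p(X_p,\cdot):T_pM\to\nu_p$, which is the first claim.

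For the metric identity I would use that the Riemannian metric on $\mathbf{Gr}(D,d)$ whose geodesic distance is the principal-angle distance $(\sum_i\theta_i^2)^{1/2}$ is the canonical one, which under the identification $T_P\mathbf{Gr}(D,d)\cong\operatorname{Hom}(P,P^\perp)$ is the Hilbert--Schmidt inner product $\langle A,B\rangle=\sum_{i=1}^d\langle Au_i,Bu_i\rangle_{\mathbb{R}^D}$ for any orthonormal basis $\{u_i\}$ of $P$. Indeed, the geodesic issuing from $P$ with initial velocity $A$, of singular values $\sigma_1,\dots,\sigma_d$, reaches principal angles $t\sigma_i$ at time $t$, so its length over $[0,1]$ equals $(\sum_i\sigma_i^2)^{1/2}=\|A\|_{\operatorname{HS}}$, which reproduces the stated distance formula with no extra scalar. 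Taking $A=d\bar{\mathbf{g}}_p(X_p)=\mathbf{II}_p(X_p,\cdot)$, $B=d\bar{\mathbf{g}}_p(Y_p)=\mathbf{II}_p(Y_p,\cdot)$, and the orthonormal basis $\{(e_i)_p\}$ of $T_pM$ then gives
\[
g^{\mathbf{Gr}(D,d)}_{\bar{\mathbf{g}}(p)}\bigl(d\bar{\mathbf{g}}_p(X_p),d\bar{\mathbf{g}}_p(Y_p)\bigr)=\sum_{i=1}^d g_p\bigl(\mathbf{II}_p(X_p,e_i),\mathbf{II}_p(Y_p,e_i)\bigr)=\bigl\langle\mathbf{II}_p(X_p,\cdot),\mathbf{II}_p(Y_p,\cdot)\bigr\rangle_{\operatorname{HS}},
\]
as claimed. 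I do not expect a genuine obstacle: the only points requiring care are the frame-independence of the identification of $\dot P(0)$ with an element of $\operatorname{Hom}(P,P^\perp)$ and the verification that the metric normalization consistent with the given principal-angle distance introduces no spurious factor — both are standard facts about Grassmannian geometry, recorded in \cite{bendokat2024grassmann} and \cite{lai2025simple}, so the lemma is essentially a bookkeeping identity.
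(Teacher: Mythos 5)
Your argument is correct: the moving-frame identification of $T_P\mathbf{Gr}(D,d)$ with $\operatorname{Hom}(P,P^\perp)$, the Gauss formula $(D_{X_p}e_i)^\perp=\mathbf{II}_p(X_p,(e_i)_p)$, and the check that the principal-angle distance corresponds to the Hilbert--Schmidt normalization of the canonical metric together give exactly the two claimed identities. The paper itself does not prove this lemma but quotes it from the cited reference, and your derivation is the standard one given there, so there is nothing to add.
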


\begin{theorem}\label{volume}
  Let $M \subset \mathbb{R}^D$ be a smooth closed orientable $d$-dimensional submanifold, and $\mathbf{II}$ be the second fundamental form of $M$ with $\|\mathbf{II}\|_{2}=a$. Let $\mathrm{vol}(M)$ and $\mathrm{vol}_c(M)$ denote the Euclidean volume of $M$ and the Riemannian volume of $(M,g_c)$, respectively. Then
\[
  \mathrm{vol}_c(M)
  \;\leq\;
  \Bigl(
    1
    +
    c\Bigl(
      \sup_{p \in M}\;\sup_{\substack{v \in T_pM\\ \|v\|_2=1}} 
      \|\mathbf{II}_p(v,\cdot)\|_{\operatorname{HS}}
    \Bigr)^2
  \Bigr)^{d/2}
  \mathrm{vol}(M).
  \]
In particular,
  $$\mathrm{vol}_c(M) \leq (1+ca^2 \min (d, D-d) )^{d/2} \mathrm{vol}(M).$$
\end{theorem}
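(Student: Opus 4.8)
The plan is to compute the Riemannian volume density of $(M,g_c)$ pointwise in terms of the second fundamental form, using the graph structure of the embedding $\iota \times \bar{\mathbf{g}}^+_c$, and then bound the resulting determinant. First I would observe that, at a point $p \in M$, the metric $g_c$ acts on $X,Y \in T_pM$ by
\[
(g_c)_p(X,Y) = g^{\mathbb{R}^D}(X,Y) + c\, g^{\mathbf{Gr}^+(D,d)}_{\bar{\mathbf{g}}^+(p)}\bigl(d\bar{\mathbf{g}}^+_p(X), d\bar{\mathbf{g}}^+_p(Y)\bigr) = g^{\mathbb{R}^D}(X,Y) + c\,\langle \mathbf{II}_p(X,\cdot), \mathbf{II}_p(Y,\cdot)\rangle_{\operatorname{HS}},
\]
where the last equality is Lemma~\ref{grass_tangent} (the oriented Gauss map has the same differential as the unoriented one, since they differ only by the choice of a local lift). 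Thus, with respect to a $g^{\mathbb{R}^D}$-orthonormal basis $\{e_i\}_{i=1}^d$ of $T_pM$, the matrix of $(g_c)_p$ is $I_d + c\,G_p$, where $G_p$ is the Gram matrix with entries $(G_p)_{ij} = \langle \mathbf{II}_p(e_i,\cdot), \mathbf{II}_p(e_j,\cdot)\rangle_{\operatorname{HS}}$, a symmetric positive semidefinite matrix. The volume density of $(M,g_c)$ relative to the Euclidean one is therefore $\sqrt{\det(I_d + c\,G_p)}$, so
\[
\mathrm{vol}_c(M) = \int_M \sqrt{\det(I_d + c\,G_p)}\; d\mathrm{vol}(p).
\]

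Next I would bound $\det(I_d + c\,G_p)$ by controlling the eigenvalues of $G_p$. Since $G_p$ is positive semidefinite with eigenvalues $\lambda_1,\dots,\lambda_d \geq 0$, we have $\det(I_d + c\,G_p) = \prod_{i=1}^d (1 + c\lambda_i) \leq (1 + c\,\lambda_{\max})^d$, where $\lambda_{\max} = \lambda_{\max}(G_p)$. Now $\lambda_{\max}(G_p) = \sup_{\|v\|_2 = 1} v^\top G_p v = \sup_{\|v\|_2 = 1} \|\mathbf{II}_p(v,\cdot)\|_{\operatorname{HS}}^2$, since $v^\top G_p v = \langle \mathbf{II}_p(v,\cdot), \mathbf{II}_p(v,\cdot)\rangle_{\operatorname{HS}}$ for a unit vector $v = \sum v_i e_i$. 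Taking the supremum over $p \in M$ gives $\sqrt{\det(I_d + c\,G_p)} \leq \bigl(1 + c\,(\sup_{p}\sup_{\|v\|_2=1}\|\mathbf{II}_p(v,\cdot)\|_{\operatorname{HS}})^2\bigr)^{d/2}$ uniformly, and integrating over $M$ yields the first claimed inequality.

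For the second, more explicit bound, I would estimate $\|\mathbf{II}_p(v,\cdot)\|_{\operatorname{HS}}^2 = \sum_{j=1}^d \|\mathbf{II}_p(v,e_j)\|_2^2$ for a unit vector $v$. Each term satisfies $\|\mathbf{II}_p(v,e_j)\|_2 \leq \|\mathbf{II}_p\|_2 = a$ by definition of the operator norm (applied to the symmetric bilinear form $\mathbf{II}_p$, using polarization: $\|\mathbf{II}_p(v,e_j)\|_2 \leq \|\mathbf{II}_p\|_2$ since $v, e_j$ are unit vectors), so summing over the $d$ indices gives $\|\mathbf{II}_p(v,\cdot)\|_{\operatorname{HS}}^2 \leq d\,a^2$. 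On the other hand, fixing a $g^{\mathbb{R}^D}$-orthonormal basis $\{n_\alpha\}_{\alpha=1}^{D-d}$ of $\nu_p$, we can also write $\|\mathbf{II}_p(v,\cdot)\|_{\operatorname{HS}}^2 = \sum_{j=1}^d\sum_{\alpha=1}^{D-d} g^{\mathbb{R}^D}(\mathbf{II}_p(v,e_j),n_\alpha)^2 = \sum_{\alpha=1}^{D-d}\|S_{n_\alpha}(v)\|_2^2$, where $S_{n_\alpha}$ is the shape operator in the direction $n_\alpha$; since $\|S_{n_\alpha}\|_2 \leq \|\mathbf{II}_p\|_2 = a$ (each $S_{n_\alpha}$ being a self-adjoint endomorphism of $T_pM$ whose operator norm is dominated by $\|\mathbf{II}_p\|_2$) and $v$ is a unit vector, each summand is at most $a^2$, giving $\|\mathbf{II}_p(v,\cdot)\|_{\operatorname{HS}}^2 \leq (D-d)\,a^2$. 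Combining the two estimates yields $\sup_p\sup_{\|v\|_2=1}\|\mathbf{II}_p(v,\cdot)\|_{\operatorname{HS}}^2 \leq a^2\min(d, D-d)$, and substituting into the first inequality gives $\mathrm{vol}_c(M) \leq (1 + ca^2\min(d,D-d))^{d/2}\,\mathrm{vol}(M)$.

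The only point requiring care — the ``main obstacle,'' such as it is — is making sure that the operator-norm bound $\|\mathbf{II}_p(v,e_j)\|_2 \leq \|\mathbf{II}_p\|_2$ is legitimate for a bilinear form evaluated on two distinct unit vectors rather than the same one; this follows from a standard polarization argument for symmetric bilinear forms, or equivalently from the self-adjointness of each shape operator $S_{n_\alpha}$, so the argument is routine once that lemma is invoked. Everything else is linear algebra on the fiber and a single integration over $M$.
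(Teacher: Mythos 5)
Your proposal is correct and follows essentially the same route as the paper: compute $(g_c)_p = I_d + c\,G_p$ via Lemma~\ref{grass_tangent}, bound the volume density by $(1+cK^2)^{d/2}$ with $K=\sup_p\sup_{\|v\|_2=1}\|\mathbf{II}_p(v,\cdot)\|_{\operatorname{HS}}$, integrate, and then estimate $\|\mathbf{II}_p(v,\cdot)\|_{\operatorname{HS}}^2\le\min(d,D-d)\,a^2$. The only difference is that you spell out the eigenvalue/determinant step and the two-sided ($d$ versus $D-d$, via polarization and via shape operators) justification of the last estimate, which the paper asserts without detail; both of your justifications are valid.
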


\begin{proof}
By Lemma~\ref{grass_tangent} and the fact that $\mathbf{Gr}^+(D,d) \to \mathbf{Gr}(D,d)$ is a local isometry, for every $p \in M$ and $X \in \mathfrak{X}(M)$,
\[
(g_c)_p(X,X)
= \|X_p\|_2^2 + c\,\|\mathbf{II}_p(X_p,\cdot)\|_{\operatorname{HS}}^2.
\]
Set $K = \sup_{p \in M}\;\sup_{\substack{v \in T_pM\\ \|v\|_2=1}} \|\mathbf{II}_p(v,\cdot)\|_{\operatorname{HS}}$. Then
\[
(g_c)_p(X,X) \;\le\; (1+cK^2)\,\|X_p\|_2^2.
\]
Hence, the associated volume forms satisfy
\[
d\mathrm{vol}_c \;\le\; (1+cK^2)^{d/2}\,d\mathrm{vol}.
\]
Integrating over $M$ yields the first inequality. The second inequality follows from
\[
\|\mathbf{II}_p(X_p,\cdot)\|_{\operatorname{HS}}^2
\;\le\; \min(d,D-d)\,\|\mathbf{II}_p\|_2^2 \|X_p \|_2^2
\;\le\; \min(d,D-d)\,a^2  \|X_p \|_2^2.
\]
\end{proof}

\subsection{Normal curvature reduction}
\begin{lemma}\label{normal_bundle}
Let $M \subset \mathbb{R}^D$ be a smooth closed orientable $d$-dimensional submanifold with normal bundle $\nu$. Denote by $\nu_c$ the normal bundle of $(M,g_c) \subset \mathbb{R}^D \times \mathbf{Gr}_c^+(D,d)$. Then for each $p \in M$, 
\[
(\nu_c)_p
=
\bigl\{
(w,0) + (-cS_p(Q),Q)
\;\big|\;
w \in \nu_p,\;
Q \in T_{\bar{\mathbf{g}}_c^+(p)} \mathbf{Gr}_c^+(D,d)
\bigr\}
\subset T_{(p, \bar{\mathbf{g}}_c^+(p))}(\mathbb{R}^D \times \mathbf{Gr}_c^+(D,d)),
\]
where $S_p(Q) \in T_pM$ is the unique vector characterized by $\langle v, S_p(Q) \rangle_{\mathbb{R}^D}
=
\langle \mathbf{II}_p(v,\cdot), Q \rangle_{\operatorname{HS}}$ for every $v \in T_pM$.
\end{lemma}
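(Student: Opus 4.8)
The plan is to compute the tangent space of the graph embedding $(M,g_c)$ at a point $(p,\bar{\mathbf{g}}_c^+(p))$ explicitly, and then describe its orthogonal complement inside $T_{(p,\bar{\mathbf{g}}_c^+(p))}(\mathbb{R}^D \times \mathbf{Gr}_c^+(D,d))$ with respect to the product metric $g^{\mathbb{R}^D} \oplus g^{\mathbf{Gr}_c^+(D,d)}$. First, I would note that $d(\iota \times \bar{\mathbf{g}}_c^+)_p(v) = (v, d\bar{\mathbf{g}}_c^+(v)) = (v, c\, d\bar{\mathbf{g}}^+(v))$ for $v \in T_pM$; using Lemma~\ref{grass_tangent} together with the fact that $\mathbf{Gr}^+(D,d) \to \mathbf{Gr}(D,d)$ is a local isometry, $d\bar{\mathbf{g}}^+(v)$ is identified with $\mathbf{II}_p(v,\cdot) \in \operatorname{Hom}(T_pM,\nu_p)$, which via the metric $g^{\mathbf{Gr}_c^+(D,d)} = c\, g^{\mathbf{Gr}^+(D,d)}$ carries Hilbert--Schmidt inner products scaled by $c$. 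Thus the tangent space to $(M,g_c)$ is $\{(v, d\bar{\mathbf{g}}^+_p(v)) \mid v \in T_pM\}$, where I must be careful to track whether the Grassmannian slot records the element $d\bar{\mathbf{g}}^+(v)$ of $\mathbf{Gr}^+$ or its scaled version; the cleanest bookkeeping is to keep the underlying vector in $T_{\bar{\mathbf{g}}^+(p)}\mathbf{Gr}^+(D,d) \cong \operatorname{Hom}(T_pM,\nu_p)$ and let the metric do the scaling.

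Next I would verify that the proposed set is contained in $(\nu_c)_p$ by a direct orthogonality check: for an arbitrary tangent vector $(v, d\bar{\mathbf{g}}^+(v))$ to $(M,g_c)$ and a candidate normal vector $(w,0) + (-cS_p(Q),Q) = (w - cS_p(Q),\, Q)$, the product inner product is
\[
\langle v,\, w - cS_p(Q)\rangle_{\mathbb{R}^D} + c\,\langle d\bar{\mathbf{g}}^+(v),\, Q\rangle_{\operatorname{HS}}
= \langle v,w\rangle_{\mathbb{R}^D} - c\langle v, S_p(Q)\rangle_{\mathbb{R}^D} + c\,\langle \mathbf{II}_p(v,\cdot), Q\rangle_{\operatorname{HS}}.
\]
The first term vanishes because $w \in \nu_p \perp T_pM$, and the last two cancel by the defining property of $S_p(Q)$. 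This shows the displayed set lies in $(\nu_c)_p$. For the reverse inclusion, I would compare dimensions: the ambient tangent space has dimension $D + \dim \mathbf{Gr}(D,d) = D + d(D-d)$, the tangent space to $M$ has dimension $d$, so $(\nu_c)_p$ has dimension $D - d + d(D-d)$; meanwhile the parametrization $(w,Q) \mapsto (w - cS_p(Q), Q)$ with $w$ ranging over $\nu_p$ (dimension $D-d$) and $Q$ over $T_{\bar{\mathbf{g}}_c^+(p)}\mathbf{Gr}^+(D,d)$ (dimension $d(D-d)$) is injective — if $w - cS_p(Q) = 0$ and $Q = 0$, then $Q=0$ forces $S_p(Q)=0$ hence $w=0$ — so its image has the full dimension $D-d + d(D-d)$, and the inclusion is an equality.

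The one genuinely delicate point is the identification of $T_{\bar{\mathbf{g}}_c^+(p)}\mathbf{Gr}_c^+(D,d)$ with $\operatorname{Hom}(T_pM,\nu_p)$ and the correct placement of the factor $c$: the map $\cdot c : \mathbf{Gr}^+(D,d) \to \mathbf{Gr}_c^+(D,d)$ is the identity on underlying manifolds, so it induces the identity on tangent spaces while multiplying the metric by $c$; consequently the well-definedness of $S_p$ must be checked against the $g^{\mathbf{Gr}_c^+(D,d)}$ inner product, which is $c$ times the Hilbert--Schmidt inner product — this is exactly why the factor $c$ appears multiplying $S_p(Q)$ rather than, say, $c^2$ or $1$. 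I expect the main obstacle to be nothing deep but rather this careful accounting of scalings across the three identifications (local isometry to the unoriented Grassmannian, Lemma~\ref{grass_tangent}, and the rescaling $\cdot c$); once the conventions are pinned down, the orthogonality computation and the dimension count finish the proof immediately. I would also remark that $S_p$ is precisely the adjoint of $d\bar{\mathbf{g}}^+_p : T_pM \to \operatorname{Hom}(T_pM,\nu_p)$ with respect to the Euclidean and Hilbert--Schmidt inner products, which makes the normal-bundle description conceptually transparent and will be convenient in the subsequent curvature computations.
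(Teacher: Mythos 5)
Your proposal is correct and follows essentially the same route as the paper: both arguments reduce to the orthogonality identity $\langle v, X\rangle_{\mathbb{R}^D} + c\,\langle \mathbf{II}_p(v,\cdot),Q\rangle_{\operatorname{HS}}=0$ together with the Riesz/adjoint characterization of $S_p(Q)$, the only cosmetic difference being that you prove one inclusion and finish with a dimension count, while the paper decomposes an arbitrary normal vector as $X=X^\top+X^\perp$ and reads off $X^\top=-cS_p(Q)$ directly, obtaining both inclusions at once. Your bookkeeping of the factor $c$ (scaled metric on the Grassmannian slot versus unscaled Hilbert--Schmidt pairing in the definition of $S_p$) matches the paper's conventions.
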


\begin{proof}
Fix $p \in M$. For every $(X,Q) \in (\nu_c)_p$ and $v \in T_pM$, we have
$$\langle v, X \rangle_{\mathbb{R}^D}
+ c\,\langle \mathbf{II}_p(v,\cdot), Q \rangle_{\operatorname{HS}} = 0.$$
Write $X = X^\top + X^\perp$ where $X^\top \in T_pM$ and $X^\perp \in \nu_p$. Since $\langle v, X^\perp \rangle_{\mathbb{R}^D}=0$, the condition reduces to
$\langle v, X^\top \rangle_{\mathbb{R}^D}
= -c\,\langle \mathbf{II}_p(v,\cdot), Q \rangle_{\operatorname{HS}}$ for every $v \in T_pM$, and letting $X^\top = -cS_p(Q)$ leads to the desired equality. The existence and uniqueness of the vector  $S_p(Q)$ follow from the Riesz representation theorem.
\end{proof}

\begin{lemma}\label{grass_connection}
Let $M \subset \mathbb{R}^D$ be a smooth closed orientable $d$-dimensional submanifold, and let $X,Y \in \mathfrak{X}(M)$. Then
\[
\nabla^{\mathbf{Gr}_c^+(D,d)}_{d\bar{\mathbf{g}}_c^+(X)}\bigl(d\bar{\mathbf{g}}_c^+(Y)\bigr)
= (\nabla^M_X \mathbf{II})(Y,\cdot) + \mathbf{II}(\nabla^M_X Y,\cdot)
\in \operatorname{Hom}(TM,\nu),
\]
where $\nabla^M$ denotes the Levi--Civita connection on $M$.
\end{lemma}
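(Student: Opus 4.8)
The plan is to strip away the scaling, descend to the unoriented Grassmannian, identify the relevant pullback connection, and then recognize the right-hand side as the covariant derivative of $\mathbf{II}$. First I would reduce to a cleaner setting: multiplying a metric by a positive constant does not alter its Levi--Civita connection (Koszul formula), so $\nabla^{\mathbf{Gr}_c^+(D,d)}=\nabla^{\mathbf{Gr}^+(D,d)}$, and the ``$\cdot c$'' map is the identity of the underlying manifold, so $d\bar{\mathbf{g}}_c^+=d\bar{\mathbf{g}}^+$. Moreover the covering $\mathbf{Gr}^+(D,d)\to\mathbf{Gr}(D,d)$ is a local isometry and a local diffeomorphism, hence it carries $\nabla^{\mathbf{Gr}^+(D,d)}$ to $\nabla^{\mathbf{Gr}(D,d)}$ and intertwines $\bar{\mathbf{g}}^+$ with the unoriented Gauss map $\bar{\mathbf{g}}$. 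It therefore suffices to prove that $\nabla^{\mathbf{Gr}(D,d)}_{d\bar{\mathbf{g}}(X)}\bigl(d\bar{\mathbf{g}}(Y)\bigr)=(\nabla^M_X\mathbf{II})(Y,\cdot)+\mathbf{II}(\nabla^M_X Y,\cdot)$, where $d\bar{\mathbf{g}}(X)$ and $d\bar{\mathbf{g}}(Y)$ are read as vector fields along $\bar{\mathbf{g}}$ and $\nabla^{\mathbf{Gr}(D,d)}_{d\bar{\mathbf{g}}(X)}$ as the pullback connection $\bar{\mathbf{g}}^*\nabla^{\mathbf{Gr}(D,d)}$ in the direction $X$.

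Next I would identify this pullback connection. Let $\gamma$ denote the tautological rank-$d$ bundle over $\mathbf{Gr}(D,d)$, a subbundle of the trivial $\mathbb{R}^D$-bundle, and $\gamma^\perp$ its orthogonal complement, so that $T\mathbf{Gr}(D,d)\cong\operatorname{Hom}(\gamma,\gamma^\perp)$. The tautological connections $\nabla^\gamma$ and $\nabla^{\gamma^\perp}$ are obtained from the flat connection on the trivial bundle by orthogonal projection onto $\gamma$, resp.\ onto $\gamma^\perp$; it is standard (see \cite{bendokat2024grassmann, lai2025simple}) that the Levi--Civita connection of $\mathbf{Gr}(D,d)$ is exactly the connection induced on $\operatorname{Hom}(\gamma,\gamma^\perp)$ by $\nabla^\gamma$ and $\nabla^{\gamma^\perp}$. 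Pulling back along $\bar{\mathbf{g}}$ and using Lemma~\ref{grass_tangent}, we have $\bar{\mathbf{g}}^*\gamma\cong TM$ and $\bar{\mathbf{g}}^*\gamma^\perp\cong\nu$ as subbundles of $M\times\mathbb{R}^D$; by the Gauss formula $D_XY=\nabla^M_XY+\mathbf{II}(X,Y)$ the pullback of $\nabla^\gamma$ is $\nabla^M$, and by the Weingarten formula the pullback of $\nabla^{\gamma^\perp}$ is the normal connection $\nabla^\perp$. Hence $\bar{\mathbf{g}}^*\nabla^{\mathbf{Gr}(D,d)}$ equals the connection $\widetilde\nabla$ induced on $\operatorname{Hom}(TM,\nu)$ by $\nabla^M$ and $\nabla^\perp$, and under these identifications $d\bar{\mathbf{g}}(Y)$ becomes the section $Z\mapsto\mathbf{II}(Y,Z)$ of $\operatorname{Hom}(TM,\nu)$ (again by Lemma~\ref{grass_tangent}).

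Finally I would conclude with the Leibniz rule. For any section $A$ of $\operatorname{Hom}(TM,\nu)$ and any $Z\in\mathfrak{X}(M)$ one has $(\widetilde\nabla_X A)(Z)=\nabla^\perp_X\bigl(A(Z)\bigr)-A\bigl(\nabla^M_X Z\bigr)$. Taking $A=\mathbf{II}(Y,\cdot)$ and comparing with the definition of the covariant derivative of the second fundamental form, namely $(\nabla^M_X\mathbf{II})(Y,Z)=\nabla^\perp_X\bigl(\mathbf{II}(Y,Z)\bigr)-\mathbf{II}(\nabla^M_X Y,Z)-\mathbf{II}(Y,\nabla^M_X Z)$, gives $\bigl(\widetilde\nabla_X(\mathbf{II}(Y,\cdot))\bigr)(Z)=(\nabla^M_X\mathbf{II})(Y,Z)+\mathbf{II}(\nabla^M_X Y,Z)$ for every $Z$, which is the asserted identity in $\operatorname{Hom}(TM,\nu)$ after undoing the reductions of the first step.

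The step I expect to be the main obstacle is the assertion that the Levi--Civita connection on $\mathbf{Gr}(D,d)$ is the $\operatorname{Hom}$-connection induced by the tautological connections. If I did not want to invoke the cited references for this, I would verify it by embedding $\mathbf{Gr}(D,d)$ isometrically into the symmetric matrices via orthogonal projectors $P\mapsto\Pi_P$, noting that $T_{\Pi_P}\mathbf{Gr}(D,d)$ is precisely the off-diagonal block of $\operatorname{Sym}(\mathbb{R}^D)$ relative to $\mathbb{R}^D=P\oplus P^\perp$, and checking that orthogonally projecting the flat derivative of a projector-valued curve onto this block reproduces the $\operatorname{Hom}$-connection (equivalently, that the second fundamental form of this embedding only contributes block-diagonal, i.e.\ normal, terms). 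Everything else is the routine bookkeeping of induced connections on bundles, together with the Gauss and Weingarten formulas for $M\subset\mathbb{R}^D$.
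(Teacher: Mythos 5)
Your proposal is correct and follows essentially the same route as the paper: reduce to the unoriented Grassmannian via the scaling and covering reductions, identify $d\bar{\mathbf{g}}(Y)$ with $\mathbf{II}(Y,\cdot)$ via Lemma~\ref{grass_tangent}, and apply the Leibniz rule for the induced connection on $\operatorname{Hom}(TM,\nu)$. The only difference is that you explicitly justify the identification of the pulled-back Levi--Civita connection of $\mathbf{Gr}(D,d)$ with the tensor connection induced by $\nabla^M$ and $\nabla^\perp$ (via the tautological bundles and the Gauss--Weingarten formulas), a step the paper's proof leaves implicit under the phrase ``Leibniz rule for tensor fields''; your version is the more complete argument.
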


\begin{proof}
  Since the covering $\mathbf{Gr}^+(D,d) \to \mathbf{Gr}(D,d)$ is a local isometry, we have
  \[
  \nabla^{\mathbf{Gr}_c^+(D,d)}_{d\bar{\mathbf g}_c^+(X)}\bigl(d\bar{\mathbf g}_c^+(Y)\bigr)
  =
  \nabla^{\mathbf{Gr}^+(D,d)}_{d\bar{\mathbf g}^+(X)}\bigl(d\bar{\mathbf g}^+(Y)\bigr) = \nabla^{\mathbf{Gr}(D,d)}_{\,d\bar{\mathbf g}(X)}\bigl(d\bar{\mathbf g}(Y)\bigr).
  \]
  By Lemma~\ref{grass_tangent} and the Leibniz rule for tensor fields,
  \[
  \nabla^{\mathbf{Gr}(D,d)}_{d\bar{\mathbf g}(X)}\bigl(d\bar{\mathbf g}(Y)\bigr)
  =\nabla^{M}_X\bigl(\mathbf{II}(Y,\cdot)\bigr)
  =(\nabla^{M}_X\mathbf{II})(Y,\cdot)+\mathbf{II}(\nabla^{M}_X Y,\cdot),
  \]
  which proves the claim.
\end{proof}

\begin{proof}[Proof of Theorem~\ref{principal_curvature}]
Denote by $\nabla^c$ and $\nu_c$ the Levi--Civita connection on $\mathbb{R}^D \times \mathbf{Gr}^+_c(D,d)$ and the normal bundle of $(M,g_c) \subset \mathbb{R}^D \times \mathbf{Gr}_c^+(D,d)$, respectively. Suppose that there exists a unit vector $(w - cS(Q),Q)$ of $\nu_c$ at $p$ such that 
\[
\|w\|_{2}^2 + c^2\|S(Q)\|_{2}^2 + c\|Q\|_{\operatorname{HS}}^2 = 1,
\]
as in Lemma~\ref{normal_bundle}, where $w \perp T_pM$. By Lemma~\ref{grass_connection}, 
\[
\nabla^c_{(v,d\bar{\mathbf{g}}_c^+(v))}(v,d\bar{\mathbf{g}}_c^+(v))
= \bigl(\nabla_{v}^{\mathbb{R}^D} v,\,
\nabla^{\mathbf{Gr}_c^+(D,d)}_{d\bar{\mathbf{g}}_c^+(v)} d\bar{\mathbf{g}}_c^+(v)\bigr)
= \bigl(\nabla_{v}^M v + \mathbf{II}(v,v),\, (\nabla_{v}^M \mathbf{II})(v,\cdot) + \mathbf{II}(\nabla_{v}^M v,\cdot)\bigr).
\]
Since
\[
d(\iota \times \bar{\mathbf{g}}_c^+)_p(\nabla_{v}^M v)
= \bigl(\nabla_{v}^M v,\, \mathbf{II}(\nabla_{v}^M v,\cdot)\bigr),
\]
the projections of $\nabla^c_{(v,d\bar{\mathbf{g}}_c^+(v))}(v,d\bar{\mathbf{g}}_c^+(v))$ and $\bigl(\mathbf{II}(v,v),(\nabla_{v}^M \mathbf{II})(v,\cdot)\bigr)$ onto $\nu_c$ coincide.
On the other hand,
\[
g^{\mathbb{R}^D \times \mathbf{Gr}_c^+(D,d)}\bigl((w,0),(\mathbf{II}(v,v),(\nabla_{v}^M \mathbf{II})(v,\cdot))\bigr)
= \bigl\langle w,\mathbf{II}(v,v)\bigr\rangle_{\mathbb{R}^D},
\]
and since $S(Q) \perp T_pM$,
\[
g^{\mathbb{R}^D \times \mathbf{Gr}_c^+(D,d)}\bigl((-cS(Q),Q),(\mathbf{II}(v,v),(\nabla_{v}^M \mathbf{II})(v,\cdot))\bigr)
= c \bigl\langle Q,(\nabla_{v}^M \mathbf{II})(v,\cdot)\bigr\rangle_{\operatorname{HS}}.
\]
Therefore, 
\begin{flalign*}
  |g^{\mathbb{R}^D \times \mathbf{Gr}_c^+(D,d)}((w-cS(Q),Q), (\mathbf{II}({v},{v}) ,(\nabla_{v}^M \mathbf{II})({v},\cdot)))| &= |\langle w, \mathbf{II}(v,v) \rangle_{\mathbb{R}^D} + c\langle Q, (\nabla_{v}^M \mathbf{II})({v},\cdot) \rangle_{\operatorname{HS}}|
  \\ &\leq (\kappa(v) \|w\|_2 + c\|(\nabla_v^M \mathbf{II})(v, \cdot)\|_{\operatorname{HS}}\|Q\|_{\operatorname{HS}})
  \\ &\leq \sqrt{\kappa(v)^2 + c\|(\nabla_v^M \mathbf{II})(v, \cdot)\|_{\operatorname{HS}}^2}\sqrt{\|w\|_2^2 + c\|Q\|_{\operatorname{HS}}^2} 
  \\ &\leq \sqrt{\kappa(v)^2 + c \|(\nabla_v^M \mathbf{II})(v, \cdot)\|_{\operatorname{HS}}^2},
\end{flalign*}
where we used $\|w\|_2^2 + c^2\|S(Q)\|_2^2 + c\|Q\|_{\operatorname{HS}}^2 = 1$ in the last inequality. Hence,
$$\kappa_c(v)
\leq \frac{\sqrt{\kappa(v)^2 + c\|(\nabla_v^M\mathbf{II})(v, \cdot)\|_{\operatorname{HS}}^2}}{\|(v,d\bar{\mathbf{g}}_c^+(v))\|_2^2}
= \frac{\sqrt{\kappa(v)^2 + c\|(\nabla_v^M \mathbf{II})(v, \cdot)\|_{\operatorname{HS}}^2}}{1 + c\|\mathbf{II}(v,\cdot)\|_{\operatorname{HS}}^2}.$$
By the volume assumption in Theorem~\ref{principal_curvature}, we obtain
\begin{flalign*}
\kappa_c(v)\,(\mathrm{vol}_c(M))^{1/d}
&\leq \frac{\sqrt{\kappa(v)^2 + c\|(\nabla_v^M \mathbf{II})(v, \cdot)\|_{\operatorname{HS}}^2}}{\sqrt{1 + c\|\mathbf{II}(v,\cdot)\|_{\operatorname{HS}}^2}}\,
(\mathrm{vol}(M))^{1/d}.
\end{flalign*}
We compute
\begin{flalign*}
\frac{d}{dc}\left(\frac{\kappa(v)^2 + c\|(\nabla_v^M \mathbf{II})(v, \cdot)\|_{\operatorname{HS}}^2}{1 + c\|\mathbf{II}(v,\cdot)\|_{\operatorname{HS}}^2}\right)
&= \frac{\|(\nabla_v^M \mathbf{II})(v, \cdot)\|_{\operatorname{HS}}^2 - \kappa(v)^2\|\mathbf{II}(v,\cdot)\|_{\operatorname{HS}}^2}
{(1 + c\|\mathbf{II}(v,\cdot)\|_{\operatorname{HS}}^2)^2} < 0,
\end{flalign*}
by the assumption. This completes the proof.
\end{proof}

\begin{remark}
In codimension one, if $\{e_i\}_{i=1}^{D-1}$ is an orthonormal eigenbasis of $T_qM$ with principal curvatures
$\lambda_1(q),\dots,\lambda_{D-1}(q)$, then in this basis, the metric $g_c$ satisfies
\[
  [(g_c)_q] = I + c H_q,\qquad
  H_q = \operatorname{diag}(\lambda_1(q)^2,\dots,\lambda_{D-1}(q)^2),
\]
so that $\operatorname{tr}H_q = \sum_{i=1}^{D-1} \lambda_i(q)^2$. Therefore, for small $c>0$, we obtain
\[
  d\mathrm{vol}_c
  = \sqrt{\det(I + cH_q)}\,d\mathrm{vol}
  = \Bigl(1 + \frac{c}{2}\,\sum_{i=1}^{D-1} \lambda_i(q)^2 + O(c^2)\Bigr)\,d\mathrm{vol}.
\]
In particular, regions where many principal curvatures are simultaneously large
contribute most to the first-order increase of $\mathrm{vol}_c(M)$. On the other hand, for any $p\in M$ and any unit vector $v \in T_pM$, the volume condition in Theorem~\ref{principal_curvature} states
\[
  \mathrm{vol}_c(M) \;\leq\; \bigl(1 + c \,\|\mathbf{II}_p(v,\cdot)\|_{\operatorname{HS}}^2\bigr)^{(D-1)/2} \mathrm{vol}(M) = \Bigl(1 + \frac{c\,(D-1)}{2}\|\mathbf{II}_p(v,\cdot)\|_{\operatorname{HS}}^2 + O(c^2)\Bigr)\,\mathrm{vol}(M).
\]
Thus, for small $c$, one may interpret this condition as requiring that at points $p$ and directions $v$ the normal curvature is significantly larger than the average curvature of $M$. In higher codimension, the analogous condition may require such a largeness assumption for all sections of the normal bundle.
\end{remark}

\begin{remark}\label{II_decrease}
The one-parameter family of metrics $\{g_c\}_{c > 0}$ may be viewed as an extrinsic
curvature-regularizing deformation of $M$ at scale $c$. Recall that $\mathbf{II}_c$ denotes the
second fundamental form of $(M,g_c) \subset \mathbb{R}^D \times
\mathbf{Gr}_c^+(D,d)$. For $p \in M$ and a unit vector $v \in T_pM$, by Theorem~\ref{principal_curvature}, the normal
curvature $\kappa_c(v)$ relative to $\mathrm{vol}_c(M)$ decreases as $c$ grows if $\kappa(v)$ is sufficiently large. Thus, the deformation
$g_c$ tends to to reduce $\|\mathbf{II}_c\|_{2}$ relative to $\mathrm{vol}_c(M)$ for appropriate $c>0$.
\end{remark}

The condition
$\kappa(v) > \frac{\|(\nabla_v^M \mathbf{II})(v,\cdot) \|_{\operatorname{HS}}}{\|\mathbf{II} (v,\cdot)\|_{\operatorname{HS}}}$ in Theorem~\ref{principal_curvature}
indicates that the normal curvature of $M$ in the direction $v$ dominates the logarithmic variation of the second fundamental form in the same direction.
\begin{theorem}\label{log_II_derivative_bound}
 Let $M \subset \mathbb{R}^D$ be a smooth $d$-dimensional submanifold. For a unit-speed geodesic $\gamma \subset M$ such that $\gamma'(t)=v(t)$, suppose that the tensor $\mathbf{II}_{\gamma(t)}(v(t),\cdot)$ does not vanish along $\gamma$. Then
\[
\left|
\frac{d}{dt}\log \|\mathbf{II}(v,\cdot)\|_{\operatorname{HS}}
\right|
\le
\frac{
\bigl\|(\nabla_v^M \mathbf{II})(v,\cdot)\bigr\|_{\operatorname{HS}}
}{
\bigl\|\mathbf{II}(v,\cdot)\bigr\|_{\operatorname{HS}}
}.
\]
\end{theorem}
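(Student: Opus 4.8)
The plan is to recognize $t \mapsto \mathbf{II}_{\gamma(t)}(v(t),\cdot)$ as a section of a bundle with a metric connection along $\gamma$, differentiate its squared Hilbert--Schmidt norm using metric compatibility, and then apply Cauchy--Schwarz. Concretely, set
\[
  A(t) \;=\; \mathbf{II}_{\gamma(t)}\bigl(v(t),\cdot\bigr)\;\in\;\operatorname{Hom}\bigl(T_{\gamma(t)}M,\nu_{\gamma(t)}\bigr)\;=\;T^*_{\gamma(t)}M\otimes\nu_{\gamma(t)},
\]
which is a smooth section along $\gamma$ of the bundle $T^*M\otimes\nu$. This bundle carries the connection induced by the Levi--Civita connection $\nabla^M$ on $TM$ (hence on $T^*M$) together with the normal connection $\nabla^{\perp}$ on $\nu$; since both of these are compatible with their respective metrics, the induced connection is compatible with the fiberwise Hilbert--Schmidt inner product $\langle\cdot,\cdot\rangle_{\operatorname{HS}}$. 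Write $\tfrac{D}{dt}$ for the associated covariant derivative along $\gamma$.

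The first key step is to identify $\tfrac{D}{dt}A$. Because $\mathbf{II}$ is a tensor field, the Leibniz rule for the partial contraction of $\mathbf{II}$ in its first slot with the vector field $v(t)=\gamma'(t)$ gives
\[
  \frac{D}{dt}A \;=\; \bigl(\nabla^M_{v}\mathbf{II}\bigr)(v,\cdot) \;+\; \mathbf{II}\bigl(\nabla^M_{v}v,\cdot\bigr).
\]
Since $\gamma$ is a geodesic of $M$, we have $\nabla^M_v v=0$, so the second term vanishes and $\tfrac{D}{dt}A=(\nabla^M_v\mathbf{II})(v,\cdot)$.

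The second key step uses metric compatibility: along $\gamma$,
\[
  \frac{d}{dt}\,\|A\|_{\operatorname{HS}}^2 \;=\; 2\,\Bigl\langle \frac{D}{dt}A,\;A\Bigr\rangle_{\operatorname{HS}} \;=\; 2\,\bigl\langle (\nabla^M_v\mathbf{II})(v,\cdot),\,A\bigr\rangle_{\operatorname{HS}}.
\]
By hypothesis $A(t)\neq 0$ everywhere along $\gamma$, so $\|A\|_{\operatorname{HS}}$ is a smooth positive function and
\[
  \frac{d}{dt}\log\|A\|_{\operatorname{HS}} \;=\; \frac{1}{2\,\|A\|_{\operatorname{HS}}^2}\,\frac{d}{dt}\|A\|_{\operatorname{HS}}^2 \;=\; \frac{\bigl\langle (\nabla^M_v\mathbf{II})(v,\cdot),\,A\bigr\rangle_{\operatorname{HS}}}{\|A\|_{\operatorname{HS}}^2}.
\]
Applying the Cauchy--Schwarz inequality for $\langle\cdot,\cdot\rangle_{\operatorname{HS}}$ to the numerator and cancelling one factor of $\|A\|_{\operatorname{HS}}$ yields
\[
  \Bigl|\frac{d}{dt}\log\|\mathbf{II}(v,\cdot)\|_{\operatorname{HS}}\Bigr|
  \;\le\;
  \frac{\|(\nabla^M_v\mathbf{II})(v,\cdot)\|_{\operatorname{HS}}}{\|\mathbf{II}(v,\cdot)\|_{\operatorname{HS}}},
\]
which is the claimed bound. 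The only point requiring care is the bookkeeping in the first two steps: checking that the connection on $T^*M\otimes\nu$ assembled from $\nabla^M$ and $\nabla^{\perp}$ is indeed metric for $\langle\cdot,\cdot\rangle_{\operatorname{HS}}$, and that the Leibniz rule applies to the partially contracted tensor $\mathbf{II}(v,\cdot)$ so that the geodesic condition $\nabla^M_v v=0$ kills the extra term; both are standard facts about connections on tensor bundles, so no substantive obstacle arises.
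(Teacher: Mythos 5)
Your proposal is correct and follows essentially the same route as the paper's proof: differentiate $\|\mathbf{II}(v,\cdot)\|_{\operatorname{HS}}^2$ via the Leibniz rule and metric compatibility, use $\nabla^M_v v=0$ to drop the extra term, and finish with Cauchy--Schwarz. Your version merely makes explicit the bundle-theoretic bookkeeping that the paper leaves implicit.
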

The proof is given in Appendix~\ref{proof_log_II}.

\subsection{Extension of the width of bottlenecks}
Recall that for an embedded submanifold $M \subset \mathbb{R}^D$, we define
\[
  L(M) = \min_{(q_1,q_2)\ \text{bottleneck}} \frac{1}{2}\,\|q_1-q_2\|_{\mathbb{R}^D},
  \qquad
  L_c(M) = \min_{(q_1,q_2)\ \text{bottleneck}} \frac{1}{2}\,d_c(q_1,q_2).
\]
By definition, we have $L_c(M) \geq L(M)$. In general, there is no meaningful lower bound for $L_c(M)$ that improves on $L(M)$ without additional assumptions on $M$. However, for closed hypersurfaces with small maximal normal curvatures, we obtain nontrivial lower bounds for $L_c(M)$ that are strictly larger than $L(M)$.

\begin{lemma}\label{antipodal}
Let $M \subset \mathbb{R}^D$ be a smooth closed orientable hypersurface with a unit normal vector field $\nu$. Let $q_1,q_2 \in M$ be two distinct points such that the open line segment between $q_1$ and $q_2$ is disjoint from $M$, and suppose that the vectors $\nu_{q_1}$ and $\nu_{q_2}$ are both parallel to the line through $q_1$ and $q_2$. Then the Gauss map $\bar{\mathbf{g}}^+ \colon M \to S^{D-1}$ sends $q_1$ and $q_2$ to antipodal points on $S^{D-1}$.
\end{lemma}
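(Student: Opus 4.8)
The plan is to translate the hypothesis about the normal vectors into a statement about the oriented tangent spaces and then invoke the description of the geodesic distance on $\mathbf{Gr}^+(D,d)$ given earlier. Let $u$ denote the common unit direction of the line through $q_1$ and $q_2$, so that $\nu_{q_1} = \pm u$ and $\nu_{q_2} = \pm u$. Since $M$ is a hypersurface, $T_{q_i}M = \nu_{q_i}^{\perp} = u^{\perp}$ as unoriented subspaces, so the tangent planes at $q_1$ and $q_2$ coincide as points of the \emph{unoriented} Grassmannian $\mathbf{Gr}(D-1,D)$. The only remaining content is the orientation: under the identification $\mathbf{Gr}^+(D,D-1) \cong S^{D-1}$ that sends an oriented hyperplane to its positively-oriented unit normal, the Gauss map $\bar{\mathbf{g}}^+$ is (up to this identification) exactly $p \mapsto \nu_p$. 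Thus I must show $\nu_{q_1} = -\nu_{q_2}$, i.e. the two unit normals point in opposite directions along the segment $\overline{q_1 q_2}$.

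First I would set up the identification $\mathbf{Gr}^+(D,D-1)\cong S^{D-1}$ carefully: an oriented $(D-1)$-plane $P$ with positively oriented orthonormal basis $(e_1,\dots,e_{D-1})$ corresponds to the unit vector $n$ for which $(n,e_1,\dots,e_{D-1})$ is a positively oriented basis of $\mathbb{R}^D$; this is an isometry from $\mathbf{Gr}^+(D,D-1)$ with its geodesic metric onto $S^{D-1}$ with the round metric, and it identifies $\bar{\mathbf{g}}^+(p)$ with the unit normal $\nu_p$ chosen by the global orientation. Under this identification the two principal angles reduce to the single angle $\theta_1 \in [0,\pi]$ between $\nu_{q_1}$ and $\nu_{q_2}$, and antipodality is the statement $\theta_1 = \pi$, equivalently $\nu_{q_1} = -\nu_{q_2}$.

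Next I would rule out the case $\nu_{q_1} = \nu_{q_2} = u$ (after possibly relabelling $q_1,q_2$ or flipping $u$, the remaining case). Parametrize the segment as $\gamma(t) = q_1 + t(q_2 - q_1)$, $t \in [0,1]$, which by hypothesis meets $M$ only at its endpoints. Consider the smooth function $f(t) = \langle \gamma(t) - q_1, \nu_{q_1}\rangle = t\,\|q_2-q_1\|$ if $\nu_{q_1}=u$; more robustly, consider the function $h(p) = \langle p, u\rangle$ on $\mathbb{R}^D$ restricted to $M$ near $q_1$ and near $q_2$. Since $\nu_{q_i} \parallel u$, the point $q_i$ is a critical point of $h|_M$, and $M$ lies locally on one side of the tangent hyperplane $q_i + u^{\perp}$ only up to second order — so instead I would argue as follows: orient the segment so that it leaves $q_1$ in the direction $+u$ and arrives at $q_2$; then the open segment lies in the open half-space $\{h > h(q_1)\}$ near $q_1$ and, since it does not re-cross $M$, it must approach $q_2$ from the side for which the outward normal of $M$ at $q_2$ points back toward $q_1$, i.e. in the direction $-u$. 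Making this precise, I would note that $\bar{\mathbf{g}}^+$ being continuous and $M$ connected, the normal is globally consistent, and a short transversality/intermediate-value argument on the signed distance to the hyperplane through $q_1$ forces $\nu_{q_2} = -\nu_{q_1}$; the alternative $\nu_{q_2} = \nu_{q_1}$ would force the open segment to exit $M$ through a third intersection point or to be tangent to $M$ at an interior point, contradicting the disjointness hypothesis.

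The main obstacle is making the last step fully rigorous: the hypothesis only says the open segment is disjoint from $M$ and the normals are parallel to the line, but a priori $M$ could curve so that both normals point the same way while the segment still avoids $M$ globally (imagine a ``C''-shaped region). The resolution is that $M$ is a \emph{closed} hypersurface, hence separates $\mathbb{R}^D$ into a bounded inside and an unbounded outside, and the outward unit normal is well-defined; if $\nu_{q_1}$ and $\nu_{q_2}$ both pointed, say, toward the inside relative to the segment, the segment would have to start going into the interior at $q_1$ and also be entering the interior at $q_2$ (reading the segment backward), which is impossible for a single arc crossing $\partial(\text{inside})$ transversally exactly at its two endpoints — a parity/Jordan–Brouwer argument. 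So I would invoke the Jordan–Brouwer separation theorem for the closed hypersurface $M$, use the resulting well-defined inside/outside and outward normal, and conclude by a crossing-parity count that $\nu_{q_1}$ and $\nu_{q_2}$ are oppositely oriented along $\overline{q_1q_2}$, hence $\bar{\mathbf{g}}^+(q_1)$ and $\bar{\mathbf{g}}^+(q_2)$ are antipodal in $S^{D-1}$.
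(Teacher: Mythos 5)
Your proposal is correct and ultimately rests on the same argument as the paper: the Jordan--Brouwer separation theorem places the open segment entirely in one component of $\mathbb{R}^D \setminus M$, forcing the outward normals at $q_1$ and $q_2$ to point in opposite directions along the line, hence $\nu_{q_1} = -\nu_{q_2}$ and the Gauss images are antipodal. The extra care you take in identifying $\mathbf{Gr}^+(D,D-1)$ with $S^{D-1}$ is harmless but not needed beyond what the paper already assumes.
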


\begin{proof}
Since the open line segment between $q_1$ and $q_2$ does not intersect $M$, the Jordan--Brouwer separation theorem implies that it is contained entirely either in $\mathbb{R}^D \setminus \mathrm{int}(M)$ or in $\mathrm{int}(M)$. If the segment lies inside $\mathrm{int}(M)$, then the outward normal at $q_1$ points in the direction of $\overrightarrow{q_1q_2}$ while the outward normal at $q_2$ points in the opposite direction $\overrightarrow{q_2q_1}$. If instead the segment lies outside $M$, the same conclusion holds. Hence $\nu_{q_1} = - \nu_{q_2}$, and the Gauss map $\bar{\mathbf{g}}^+$ sends $q_1$ and $q_2$ to antipodal points on $S^{D-1}$.
\end{proof}

\begin{theorem}\label{hypersurface}
Let $M \subset \mathbb{R}^D$ be a smooth closed orientable hypersurface. Suppose that the reach of $M \subset \mathbb{R}^D$ is attained at a bottleneck of $M$ (see the second case in Theorem~\ref{reach}) with $\textsf{rch}_{\mathbb{R}^D}(M) = L(M)$. Then
    $$L_{c}(M) \geq \min\!\left(\frac{1}{2}\sqrt{\,4L(M)^2 + c\pi^2\,},\, 2L(M) \right).$$
\end{theorem}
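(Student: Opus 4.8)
The plan is to take a bottleneck $(q_1,q_2)$ of $M \subset \mathbb{R}^D$ realizing the minimum $L_c(M)$, so that $\tfrac12 d_c(q_1,q_2) = L_c(M)$, and bound $d_c(q_1,q_2)$ from below using Definition~\ref{grassmannian_distance}:
\[
d_c(q_1,q_2)^2 = \|q_1-q_2\|_{\mathbb{R}^D}^2 + c\, d_{\mathbf{Gr}^+(D,d)}\bigl(\bar{\mathbf{g}}^+(q_1),\bar{\mathbf{g}}^+(q_2)\bigr)^2.
\]
The Euclidean term is bounded below by $4L(M)^2$ whenever $\|q_1-q_2\|_{\mathbb{R}^D} \le 4L(M)$ (otherwise $d_c(q_1,q_2) \ge \|q_1-q_2\|_{\mathbb{R}^D} > 4L(M) \ge 2L(M)$ directly, which already gives the second term of the minimum). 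So I would split into the case $\|q_1 - q_2\|_{\mathbb{R}^D} > 4L(M)$, handled trivially, and the main case $\|q_1 - q_2\|_{\mathbb{R}^D} \le 4L(M)$, where the Euclidean contribution is $\ge 4L(M)^2$ and it remains to show the Grassmannian term contributes at least $c\pi^2$, i.e.\ that $d_{\mathbf{Gr}^+(D,d)}\bigl(\bar{\mathbf{g}}^+(q_1),\bar{\mathbf{g}}^+(q_2)\bigr) = \pi$.

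For this I would invoke Lemma~\ref{antipodal}: since $(q_1,q_2)$ is a bottleneck of the hypersurface $M$, the segment $\overline{q_1 q_2}$ is orthogonal to both $T_{q_1}M$ and $T_{q_2}M$, hence $\nu_{q_1}$ and $\nu_{q_2}$ are both parallel to the line through $q_1$ and $q_2$. To apply Lemma~\ref{antipodal} I also need the open segment $\overline{q_1q_2}$ to be disjoint from $M$; this is where the reach hypothesis $\mathsf{rch}_{\mathbb{R}^D}(M) = L(M)$ enters — a shortest bottleneck has half-width equal to the reach, and the open line segment joining the two endpoints stays within the medial-axis-free tube (a point on the open segment has a unique nearest point and cannot itself lie on $M$ for a minimal-width bottleneck, since otherwise one of the two half-segments would be a shorter orthogonal chord or violate the reach bound). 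With this, Lemma~\ref{antipodal} gives that $\bar{\mathbf{g}}^+$ sends $q_1,q_2$ to antipodal points of $S^{D-1} = \mathbf{Gr}^+(D,1)$, whose geodesic distance is $\pi$; for a general hypersurface ($d = D-1$), one identifies the oriented tangent hyperplane with its oriented unit normal, so $d_{\mathbf{Gr}^+(D,D-1)}(\bar{\mathbf{g}}^+(q_1),\bar{\mathbf{g}}^+(q_2)) = \pi$ as well, using that $\mathbf{Gr}^+(D,D-1) \cong S^{D-1}$ isometrically.

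Combining the two cases: either $d_c(q_1,q_2) > 4L(M) \ge 2\cdot 2L(M)$, or $d_c(q_1,q_2)^2 \ge 4L(M)^2 + c\pi^2$, so in all cases $L_c(M) = \tfrac12 d_c(q_1,q_2) \ge \min\bigl(\tfrac12\sqrt{4L(M)^2+c\pi^2},\, 2L(M)\bigr)$. The step I expect to be the main obstacle is the disjointness of the open segment $\overline{q_1q_2}$ from $M$: one must argue carefully from the reach assumption that a minimal-width bottleneck cannot have an interior point lying on $M$, since Lemma~\ref{antipodal} is stated only under that hypothesis, and without it the Gauss images need not be antipodal. A clean way is to note that if the open segment met $M$ at a point $p$, then $d_{\mathbb{R}^D}(p, \{q_1,q_2\}) < L(M) = \mathsf{rch}_{\mathbb{R}^D}(M)$ would force $p$ to have $q_1$ or $q_2$ as its unique nearest point in $M$, contradicting $p \in M$ (whose nearest point is itself), unless $p \in \{q_1,q_2\}$, which is excluded since $q_1 \ne q_2$ and the segment is open.
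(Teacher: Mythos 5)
Your overall strategy is close in spirit to the paper's, but there is a genuine gap at exactly the step you flagged: the disjointness of the open segment $\overline{q_1q_2}$ from $M$ in your "short bottleneck" case. The argument you offer for it does not work. First, for a point $p$ in the interior of a segment of length up to $4L(M)$ one only gets $d_{\mathbb{R}^D}(p,\{q_1,q_2\}) \le 2L(M)$, not $< L(M)$. Second, and more fundamentally, "$p$ has a unique nearest point in $M$" is not contradicted by $p\in M$: a point of $M$ has itself as its unique nearest point (at distance $0$), and lying within reach of $q_1$ does not force $q_1$ to be that nearest point. So no contradiction is reached. Worse, the disjointness can genuinely fail for a bottleneck realizing $L_c(M)$: if $\nu_{q_1}=\nu_{q_2}$, then the contrapositive of Lemma~\ref{antipodal} forces the open segment to meet $M$, and the Gauss images \emph{coincide} rather than being antipodal, so the $c\pi^2$ contribution you rely on is simply absent for that bottleneck.

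The paper avoids this by splitting on the direction of the unit normals rather than on the width of the bottleneck. If $\nu_{q_1}=-\nu_{q_2}$, antipodality of the Gauss images in $S^{D-1}\cong\mathbf{Gr}^+(D,D-1)$ is immediate and $d_c(q_1,q_2)\ge\sqrt{4L(M)^2+c\pi^2}$. If $\nu_{q_1}=\nu_{q_2}$, the segment contains a third point $q_3\in M$, and a reach-based argument at the midpoint $q_4$ of $\overline{q_1q_3}$ (assuming $\|q_1-q_3\|<2L(M)$, one produces two distinct points $q_1\neq q_5$ of $M$ within distance $<L(M)$ of $q_4$ with $\overline{q_1q_4}\perp T_{q_1}M$ and $\overline{q_5q_4}\perp T_{q_5}M$, contradicting $\mathsf{rch}_{\mathbb{R}^D}(M)=L(M)$) shows $\|q_1-q_3\|\ge 2L(M)$ and likewise $\|q_3-q_2\|\ge 2L(M)$, hence $\|q_1-q_2\|\ge 4L(M)$ and $\tfrac12 d_c(q_1,q_2)\ge 2L(M)$. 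This dichotomy is the missing ingredient in your plan: to make your case analysis sound you would have to prove that a bottleneck with equal normals has width at least $4L(M)$, which is precisely the substantive part of the paper's proof and is not supplied by your reach argument.
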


\begin{proof}
Let $\nu$ denote the unit normal vector field of $M \subset \mathbb{R}^D$ and suppose that $q_1,q_2 \in M$ form a bottleneck of $M \subset \mathbb{R}^D$. If $ \nu_{q_1}= \nu_{q_2}$ (see Figure~\ref{hypersurface_proof}), then by Lemma~\ref{antipodal}, the line segment $\overline{q_1 q_2}$ contains another point $q_3 \in M$. We will show that $\|q_1-q_3\|_{\mathbb{R}^D} \geq 2L(M)$. 

Suppose for contradiction that $\|q_1-q_3\|_{\mathbb{R}^D} < 2L(M)$ and let the midpoint $q_4 = \frac{q_1+q_3}{2}$. Then
$$ \|q_1 - q_4\|_{\mathbb{R}^D} < L(M), \quad \|q_3-q_4\|_{\mathbb{R}^D} < L(M),\quad \overline{q_1 q_4 } \perp T_{q_1}M.$$
Using the distance function $\|q_4 - \cdot\|_{\mathbb{R}^D}$ with the initial point $q_3$, one can find $q_5 \in M$ such that $\|q_5 -q_4\|_{\mathbb{R}^D} < L(M)$ and $T_{q_5}M \perp \overline{q_5 q_4}.$ If $q_5 = q_1$, then there exists a curve $\delta$ joining $q_3$ and $q_1$ such that $\|\delta(t)-q_4\|_{\mathbb{R}^D}$ is decreasing. However, since $\|q_3-q_4\|_{\mathbb{R}^D} = \|q_4-q_1\|_{\mathbb{R}^D}$, the function $\|\delta(t)-q_4\|_{\mathbb{R}^D}$ must be constant, yielding a contradiction. Thus we must have $q_1 \neq q_5$. Then the point $q_4$ satisfies
$$\|q_1-q_4\|_{\mathbb{R}^D} < L(M),\quad \|q_5-q_4\|_{\mathbb{R}^D} < L(M), \quad T_{q_1}M \perp \overline{q_1q_4},\quad T_{q_5}M \perp \overline{q_5q_4}.$$
Since the reach of $M \subset \mathbb{R}^D$ equals $L(M)$, any point $y$ with $d_{\mathbb{R}^D}(y,M)<L(M)$ has a unique nearest point $x\in M$ such that $y-x\perp T_xM$ and $\|y-x\|_{\mathbb{R}^D}=d_{\mathbb{R}^D}(y,M)$. For $y=q_4$ we obtain two such points $q_1$ and $q_5$, which is a contradiction. Therefore, we conclude $\|q_1 -q_3\|_{\mathbb{R}^D} \geq 2L(M)$ and one can deduce $\|q_2 - q_3\|_{\mathbb{R}^D} \geq 2L(M)$ in the same way. Hence $\|q_1-q_2\|_{\mathbb{R}^D} = \|q_1- q_3\|_{\mathbb{R}^D} + \|q_3 - q_2\|_{\mathbb{R}^D} \geq 4L(M)$.
\begin{figure}[ht]
\begin{center}
\begin{tikzpicture}[line cap=round,line join=round, scale=0.7]

% ===== heights =====
\pgfmathsetmacro{\yone}{5.8}
\pgfmathsetmacro{\ythree}{3.2}
\pgfmathsetmacro{\ytwo}{0.8}

% cycloid scales (one arch fits q1->q3 and q3->q2)
\pgfmathsetmacro{\aone}{(\yone-\ythree)/(2*pi)}
\pgfmathsetmacro{\atwo}{(\ythree-\ytwo)/(2*pi)}

% key points
\coordinate (q1) at (0,\yone);
\coordinate (q3) at (0,\ythree);
\coordinate (q2) at (0,\ytwo);
\coordinate (q4) at (0,{(\yone+\ythree)/2});  % midpoint of q1 and q3 (off-curve)

% q5 at t=pi (same height as q4)
\pgfmathsetmacro{\tFive}{pi}
\pgfmathsetmacro{\qFiveX}{\aone*(1 - cos(\tFive r))} % = 2*aone
\pgfmathsetmacro{\qFiveY}{\yone - \aone*(\tFive - sin(\tFive r))}
\coordinate (q5) at (\qFiveX,\qFiveY);

% extension amount (used for cycloid arcs)
\pgfmathsetmacro{\delta}{0.6}
\pgfmathsetmacro{\SAMPLES}{500}

% ===== gray guides =====
\draw[gray,thick] (0,0.3) -- (0,6.4);                 % vertical q1--q2
\draw[gray,thick] (-2.4,\yone) -- (3.0,\yone);        % horizontal at q1
\draw[gray,thick] (-2.4,\ytwo) -- (3.0,\ytwo);        % horizontal at q2
\draw[gray,thick] (q4) -- (q5);                        % horizontal q4--q5
\draw[gray,thick] ($(q5)+(0,-1)$) -- ($(q5)+(0,1)$); % vertical at q5

% ===== main cycloid arcs (black) =====
% q1 -> q3 : right-rotated cycloid
\draw[black,very thick,domain=-\delta:2*pi+\delta,samples=\SAMPLES,variable=\t,smooth]
  plot ({ \aone*(1 - cos(\t r)) },
        { \yone - \aone*(\t - sin(\t r)) });

% q3 -> q2 : left-rotated cycloid
\draw[black,very thick,domain=-\delta:2*pi+\delta,samples=\SAMPLES,variable=\t,smooth]
  plot ({ -\atwo*(1 - cos(\t r)) },
        {  \ythree - \atwo*(\t - sin(\t r)) });

% ===== extra tails so the curve continues beyond q1 (left/down) and q2 (right/up) =====
% tail from q1: bend to the LEFT and DOWN, starting with horizontal direction
\draw[black,very thick]
  (q1) .. controls ($(q1)+(-1.2,0)$) and ($(q1)+(-1.6,-0.7)$) .. ($(q1)+(-2.4,-1.4)$);

% tail from q2: bend to the RIGHT and UP, starting with horizontal direction
\draw[black,very thick]
  (q2) .. controls ($(q2)+(1.2,0)$) and  ($(q2)+(1.6,0.9)$) .. ($(q2)+(2.5,1.8)$);

% ===== right-angle markers =====
% at q1 (horizontal vs vertical)
\draw[black,line width=0.45pt]
  ($(q1)+(0.18,0)$) -- ($(q1)+(0.18,-0.18)$) -- ($(q1)+(0,-0.18)$);
% at q2 (horizontal vs vertical)
\draw[black,line width=0.45pt]
  ($(q2)+(0.18,0)$) -- ($(q2)+(0.18,0.18)$) -- ($(q2)+(0,0.18)$);
% at q5 (vertical vs horizontal q4--q5)
\draw[black,line width=0.45pt]
  ($(q5)+(-0.18,0)$) -- ($(q5)+(-0.18,0.18)$) -- ($(q5)+(0,0.18)$);

% ===== points =====
\fill[blue] (q1) circle (2.4pt) node[above left] {$q_1$};
\fill[blue] (q3) circle (2.4pt) node[left]       {$q_3$};
\fill[blue] (q2) circle (2.4pt) node[below left] {$q_2$};
\fill[blue] (q5) circle (2.4pt) node[above right]{$q_5$};
\fill[red]  (q4) circle (2.4pt) node[left]       {$q_4$};

\end{tikzpicture}
\end{center}
\caption{Geometric configuration in the proof of Theorem~\ref{hypersurface}.}
\label{hypersurface_proof}
\end{figure}
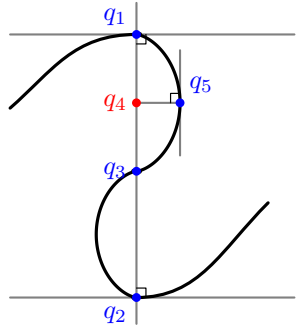

If $\nu_{q_1} = -\nu_{q_2}$, then $\|q_1-q_2\|_{\mathbb{R}^D} \geq 2L(M)$, and by Lemma~\ref{antipodal}, the geodesic distance $d_{g^{\mathbf{Gr}^+}}(\bar{\mathbf{g}}(q_1),\bar{\mathbf{g}}(q_2))$ equals the length of the half of the great circle in $S^{D-1}$. Hence, we have $d_c(q_1,q_2) \geq \sqrt{(2L(M))^2 + c \cdot (\pi)^2}$. Considering both cases together, we conclude that
\[
 L_{c}(M) \geq \min\!\left(\frac{1}{2}\sqrt{\,4L(M)^2+ c\pi^2\,},\, 2L(M)\right).
\]
\end{proof}

\begin{proof}[Proof of Theorem~\ref{normalized_bottleneck}]
Let $a = \|\mathbf{II}\|_2 > 0$. Since $L(M) \leq \frac{1}{a}$, by Theorem~\ref{reach}, the reach of $M \subset \mathbb{R}^D$ is attained at a bottleneck. By Theorem~\ref{hypersurface}, for $c\in (0,\frac{12L(M)^2}{\pi^2}]$,
  $$ L_c(M) \geq \frac{1}{2}\sqrt{4L(M)^2+c\pi^2}.$$
On the other hand, by Theorem~\ref{volume},
  $$\mathrm{vol}_c(M) \leq (1+ca^2)^{(D-1)/2}\mathrm{vol}(M).$$
Combining these inequalities, we obtain
  $$\frac{L_c(M)}{\mathrm{vol}_c(M)^{1/(D-1)}} \geq \frac{1}{2(\mathrm{vol}(M))^{1/(D-1)}}\sqrt{\frac{4L(M)^2+c\pi^2}{1+ca^2}}.$$
Since $L(M) \leq \frac{1}{a}$ again, we compute
  $$\frac{d}{dc}\left(\frac{4L(M)^2+c\pi^2}{1+ca^2}\right) = \frac{\pi^2 - 4L(M)^2a^2}{(1+ca^2)^2} > 0,$$
which completes the proof.
\end{proof}

As an example, we consider the standard embedded torus \(T^2\subset\mathbb{R}^3\), for which the systole and the minima width of bottlenecks are small compared to its volume. We show that under the embedding into \(\mathbb{R}^3\times\mathbf{Gr}_c^+(3,2)\), the normalized quantities \(\mathrm{sys}_c(T^2)^2/\mathrm{vol}_c(T^2)\) and \(L_c(T^2)^2/\mathrm{vol}_c(T^2)\) can be significantly increased. For comparison, recall Loewner's torus inequality \cite{pu1952some} in the classical setting, stating that every Riemannian $2$-torus $\mathbb{T}^2$ equipped with an arbitrary metric satisfies
\[
\frac{\mathrm{sys}(\mathbb{T}^2)^2}{\mathrm{vol}(\mathbb{T}^2)} \le \frac{2}{\sqrt{3}}.
\]
\begin{example}\label{torus}
Consider the standard embedded torus \(T^2\subset\mathbb{R}^3\) parametrized by
\[
T^2 \, = \,
\Bigl\{
\bigl((R+r\cos v)\cos u,\ (R+r\cos v)\sin u,\ r\sin v\bigr)
\ \Big|\ 
u,v\in [0,2\pi),\ 0<r<R
\Bigr\}.
\]
Assume that \(\frac{r}{R} \le \frac{1}{m}\) for some \(m\ge 2\), and set \(c = (R-r)^2\).
For every \(\varepsilon>0\), there exists \(M = M(\varepsilon)>0\) such that for \(m > M\),
\[
\frac{\mathrm{sys}_c(T^2)^2}{\mathrm{vol}_c(T^2)} > \frac{2\pi}{2\pi+4} - \varepsilon,
\qquad
\frac{L_c(T^2)^2}{\mathrm{vol}_c(T^2)} > \frac{1}{2\pi(2\pi+4)} - \varepsilon,
\]
where \(\mathrm{sys}_c(T^2)\) and \(\mathrm{vol}_c(T^2)\) denote the systole and the Riemannian volume of \((T^2,g_c)\), respectively.
In particular, as \(\frac{r}{R}\to 0\), the normalized quantities \(\mathrm{sys}_c(T^2)^2/\mathrm{vol}_c(T^2)\) and \(L_c(T^2)^2/\mathrm{vol}_c(T^2)\) approach the positive constants \(\frac{2\pi}{2\pi+4}\) and \(\frac{1}{2\pi(2\pi+4)}\), respectively.
The proof is deferred to Appendix~\ref{appendix:torus-proof}.
\end{example}

\subsection{Range of radii for homotopy equivalences}

\begin{lemma}[{\cite[Theorem~3.2]{albano2015cut}, \cite[Corollary~4.2]{warner1966extension}}; modified]\label{rauch}
Let $S$ be a smooth submanifold of a closed Riemannian manifold $N$. Assume that the sectional curvature of $N$ is bounded above by $K>0$, and that the operator norm of the second fundamental form of $S\subset N$ is bounded above by $\Lambda >0$. Let $t_{\mathrm{foc}}$ denote the first focal time of $S$ (see the first case in Figure~\ref{cutlocus_figure}). Then
$$t_{\mathrm{foc}}  \;\geq\; \frac{1}{\sqrt{K}} \arctan\!\left(\frac{\sqrt{K}}{\Lambda}\right).$$
\end{lemma}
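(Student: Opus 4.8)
The plan is to prove this as a Rauch-type focal comparison: estimate the first focal time of $S$ through the second-variation ($S$-index) form along a unit-speed normal geodesic, and then dominate that form by a constant-curvature model whose first conjugate time is exactly $\tfrac{1}{\sqrt K}\arctan(\sqrt K/\Lambda)$.

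First I would reduce to an index-form statement. Fix a unit normal vector $v$ at a point $p\in S$, let $\gamma(t)=\exp_p(tv)$, and for $\ell>0$ consider the $S$-index form
\[
  I_\ell(V,V)=\int_0^\ell\bigl(|\nabla_{\dot\gamma}V|^2-\langle R(V,\dot\gamma)\dot\gamma,V\rangle\bigr)\,dt-\langle A_v V(0),V(0)\rangle
\]
on piecewise-smooth vector fields $V$ along $\gamma$ with $V\perp\dot\gamma$, $V(0)\in T_pS$, and $V(\ell)=0$, where $A_v$ denotes the shape operator of $S\subset N$ in the normal direction $v$. By the Morse index theorem for focal points \cite{warner1966extension, albano2015cut}, $S$ has no focal point along $\gamma$ in $(0,\ell]$ exactly when $I_\ell$ is positive definite; hence it suffices to show that $I_\ell$ is positive definite for every such $(p,v)$ whenever $\ell<t^{\ast}:=\tfrac{1}{\sqrt K}\arctan(\sqrt K/\Lambda)$.

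Next I would compare $I_\ell$ with a model form. The curvature hypothesis gives $\langle R(V,\dot\gamma)\dot\gamma,V\rangle\le K|V|^2$ for $V\perp\dot\gamma$, and $\|\mathbf{II}\|_2\le\Lambda$ gives $\langle A_v V(0),V(0)\rangle\le\Lambda|V(0)|^2$, since $A_v$ is self-adjoint with $|\langle A_v x,x\rangle|=|\langle\mathbf{II}(x,x),v\rangle|\le\Lambda|x|^2$. Hence $I_\ell(V,V)\ge\widetilde I_\ell(V,V):=\int_0^\ell\bigl(|\nabla_{\dot\gamma}V|^2-K|V|^2\bigr)\,dt-\Lambda|V(0)|^2$, and it is enough to show $\widetilde I_\ell$ is positive definite for $\ell<t^{\ast}$. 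Let $f(t)=\cos(\sqrt K\,t)-\tfrac{\Lambda}{\sqrt K}\sin(\sqrt K\,t)$ be the solution of $f''+Kf=0$ with $f(0)=1$ and $f'(0)=-\Lambda$; its first positive zero is precisely $t^{\ast}$, so $f>0$ on $[0,\ell]$ whenever $\ell<t^{\ast}$. Writing $V=fW$ and integrating the cross term $\int 2ff'\langle W,\nabla_{\dot\gamma}W\rangle$ by parts, using $f(0)=1$, $f'(0)=-\Lambda$, $W(\ell)=0$, and $ff''=-Kf^2$, all boundary and curvature terms cancel and one is left with $\widetilde I_\ell(V,V)=\int_0^\ell f(t)^2\,|\nabla_{\dot\gamma}W|^2\,dt\ge 0$, with equality only if $W$ is parallel along $\gamma$, hence identically zero since $W(\ell)=0$, i.e.\ $V\equiv 0$. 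Therefore $t_{\mathrm{foc}}\ge t^{\ast}$. (Equivalently, this last step is the scalar Riccati comparison $u'+u^2+K\le 0$ with $u(0)=-\Lambda$, whose solution first diverges to $-\infty$ at $t^{\ast}$.)

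I expect the main obstacle to be bookkeeping in the first step rather than anything conceptual: fixing the sign convention for $A_v$ so that the boundary term of the $S$-index form is exactly $-\langle A_v V(0),V(0)\rangle$, and verifying that the focal-point Morse index theorem applies verbatim for a submanifold $S$ of arbitrary codimension in a general closed Riemannian manifold $N$ (the constraint $V(0)\in T_pS$, the perpendicularity $V\perp\dot\gamma$, and the use of piecewise-smooth test fields). This is precisely where the cited results \cite{warner1966extension} and \cite{albano2015cut} are invoked; once the index form is in place, the comparison and the $\arctan$ ODE computation are routine.
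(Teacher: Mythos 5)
Your proof is correct. The paper does not prove this lemma at all---it is stated as a citation of Warner's focal-point Rauch comparison and Albano's cut-locus result---and your index-form argument (bounding the $S$-index form by the constant-curvature model with umbilic shape operator $\Lambda I$, substituting $V=fW$ with $f(t)=\cos(\sqrt K\,t)-\tfrac{\Lambda}{\sqrt K}\sin(\sqrt K\,t)$, and invoking the focal Morse index theorem) is precisely the standard proof of the cited result, with the model's first zero correctly computed as $\tfrac{1}{\sqrt K}\arctan(\sqrt K/\Lambda)$.
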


\begin{theorem}\label{inj_rad}
Let $M \subset \mathbb{R}^D$ be a smooth closed orientable $d$-dimensional submanifold. Suppose that for a point $q \in M$, there exist distinct unit-speed geodesics $(\gamma_1, \Gamma_1)$ and $(\gamma_2,\Gamma_2)$ in $\mathbb{R}^D \times \mathbf{Gr}_c^+(D,d)$ satisfying
$$(\gamma_1, \Gamma_1)(0)=(\gamma_2,\Gamma_2)(0)=(q,\bar{\mathbf{g}}^+_c(q)),\qquad (\gamma_1, \Gamma_1)(T)=(\gamma_2,\Gamma_2)(T).$$
Then 
\[
T \;\geq\; \frac{\sqrt{c}\,\pi}{2}.
\]
\end{theorem}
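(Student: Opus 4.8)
The plan is to use the product structure of $\mathbb{R}^D\times\mathbf{Gr}_c^+(D,d)$ to reduce the statement to a fact about the injectivity radius of the oriented Grassmannian. A unit-speed geodesic $(\gamma,\Gamma)$ in a Riemannian product is a pair consisting of a geodesic $\gamma$ in $\mathbb{R}^D$ --- a straight line traversed at constant speed $a:=\|\gamma'\|_{\mathbb{R}^D}$ --- and a geodesic $\Gamma$ in $\mathbf{Gr}_c^+(D,d)$ traversed at constant $g^{\mathbf{Gr}_c^+(D,d)}$-speed, with $\|\gamma'\|_{\mathbb{R}^D}^2$ and $\|\Gamma'\|_{g^{\mathbf{Gr}_c^+(D,d)}}^2$ individually constant and summing to $1$. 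First I would apply this to both $(\gamma_1,\Gamma_1)$ and $(\gamma_2,\Gamma_2)$: since $\gamma_1,\gamma_2$ are straight lines issuing from $q$ with $\gamma_1(T)=\gamma_2(T)$ and $T>0$, they have equal initial velocity, so $\gamma_1\equiv\gamma_2$ and $a_1=a_2=:a$. Hence $\Gamma_1$ and $\Gamma_2$ are geodesics of $\mathbf{Gr}_c^+(D,d)$ from $P_0:=\bar{\mathbf{g}}_c^+(q)$ to the common point $\Gamma_1(T)=\Gamma_2(T)$, both of constant $g^{\mathbf{Gr}_c^+(D,d)}$-speed $\sqrt{1-a^2}$, hence of the same length $\ell=T\sqrt{1-a^2}\le T$. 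Moreover $a<1$, since $a=1$ would force $\Gamma_1,\Gamma_2$ to be constant and therefore equal, contradicting $(\gamma_1,\Gamma_1)\neq(\gamma_2,\Gamma_2)$; thus $\Gamma_1\neq\Gamma_2$.

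Next I would turn the existence of two distinct geodesics $\Gamma_1\neq\Gamma_2$ of equal length $\ell$ with the same endpoints into a lower bound on $\ell$. Writing $\Gamma_i(t)=\exp_{P_0}(tV_i)$ with $\|V_i\|=\sqrt{1-a^2}$, the equality $\Gamma_1(T)=\Gamma_2(T)$ says $\exp_{P_0}(TV_1)=\exp_{P_0}(TV_2)$ with $TV_1\neq TV_2$ and $\|TV_1\|=\|TV_2\|=\ell$. If $\ell$ were smaller than the injectivity radius of $\mathbf{Gr}_c^+(D,d)$ (a constant, by homogeneity), then $\exp_{P_0}$ would be injective on the closed ball of radius $\ell$, forcing $TV_1=TV_2$; hence $\ell\ge\operatorname{inj}(\mathbf{Gr}_c^+(D,d))$. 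Because $g^{\mathbf{Gr}_c^+(D,d)}=c\,g^{\mathbf{Gr}^+(D,d)}$ rescales all lengths by $\sqrt{c}$, we have $\operatorname{inj}(\mathbf{Gr}_c^+(D,d))=\sqrt{c}\,\operatorname{inj}(\mathbf{Gr}^+(D,d))$, so it remains to prove $\operatorname{inj}(\mathbf{Gr}^+(D,d))\ge\frac{\pi}{2}$; granting this, $T\ge\ell\ge\sqrt{c}\,\frac{\pi}{2}$.

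For the injectivity-radius bound I would argue directly from the explicit distance formula and the canonical form of Grassmannian geodesics. If $d_{\mathbf{Gr}^+(D,d)}(P_0,Q)<\frac{\pi}{2}$, then the principal angles $\theta_1\ge\cdots\ge\theta_d\ge0$ of $(P_0,Q)$ all satisfy $\theta_i<\frac{\pi}{2}$ and we are necessarily in the $\det(A^\top B)>0$ branch of the distance formula (in the other branch the distance would be at least $\pi-\theta_1\ge\frac{\pi}{2}$), so $d_{\mathbf{Gr}^+(D,d)}(P_0,Q)=(\sum_i\theta_i^2)^{1/2}$. Any geodesic issuing from $P_0$ of length $L<\frac{\pi}{2}$ has, in its Jordan normal form, rotation angles bounded by $L<\frac{\pi}{2}$; a direct computation of $A^\top B$ then shows these rotation angles are exactly the principal angles of $(P_0,\text{endpoint})$, with $\det(A^\top B)=\prod\cos\theta_i>0$, so the geodesic has length equal to $d_{\mathbf{Gr}^+(D,d)}(P_0,\text{endpoint})$ and is therefore minimizing. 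Combined with uniqueness of the minimizing geodesic between points all of whose principal angles lie below $\frac{\pi}{2}$, this makes $\exp_{P_0}$ injective on the open $\frac{\pi}{2}$-ball, giving $\operatorname{inj}(\mathbf{Gr}^+(D,d))\ge\frac{\pi}{2}$ (in fact with equality when $\min(d,D-d)\ge2$, and $\operatorname{inj}=\pi$ when $\mathbf{Gr}^+(D,d)$ is a round sphere).

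The main obstacle is this last step: carrying out the Jordan-normal-form analysis of geodesics of $\mathbf{Gr}^+(D,d)$ carefully enough to justify both that short geodesics are minimizing and that the minimizing geodesic is unique when all principal angles are below $\frac{\pi}{2}$, keeping track of orientations (the orientation sign is precisely what raises $\operatorname{inj}$ from $\frac{\pi}{2}$ to $\pi$ in the spherical cases $\min(d,D-d)=1$). If one prefers, this step can instead be obtained by citing the explicit description of the cut locus of a point in $\mathbf{Gr}^+(D,d)$ from the literature on Grassmannian geometry, or by invoking Klingenberg's lemma together with the lower bound $\tfrac{\pi}{2}$ on the conjugate radius and the lower bound $\pi\sqrt{2}$ on the length of the shortest closed geodesic, both of which follow from the same geodesic description. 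Everything else --- the product decomposition, the coincidence $\gamma_1\equiv\gamma_2$, and the passage from two distinct equidistant geodesics to a lower bound via injectivity --- is routine.
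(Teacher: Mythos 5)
Your argument is correct, and it takes a genuinely different route from the paper. The paper's proof stays entirely at the level of the \emph{conjugate radius}: it cites the bound $>\pi/2$ for $\mathbf{Gr}(D,d)$, lifts it to the cover and rescales, and then asserts that two distinct geodesics with common endpoints at time $T$ force $T\ge\sqrt{c}\,\pi/2$. You instead exploit the product structure more aggressively: the $\mathbb{R}^D$ components are straight lines with the same endpoints, hence identical, so the two geodesics can only differ in the Grassmannian factor, and the statement reduces to a lower bound on the \emph{injectivity radius} of $\mathbf{Gr}_c^+(D,d)$. This is the logically cleaner quantity to invoke --- absence of conjugate points alone does not in general preclude two distinct equal-length geodesics between a pair of points (think of a flat torus), so your reduction actually repairs a step the paper leaves implicit; to make the paper's version airtight one would need Klingenberg's lemma together with a bound on the shortest geodesic loop, which is essentially what your fallback option supplies. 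The one piece you leave as a sketch, $\operatorname{inj}(\mathbf{Gr}^+(D,d))\ge\pi/2$, is standard and your proposed routes to it (the principal-angle description of short geodesics and their minimality/uniqueness, or the cut-locus description from the Grassmannian literature, or Klingenberg with the conjugate-radius and closed-geodesic bounds) are all viable; note also that since $\mathbf{Gr}^+(D,d)\to\mathbf{Gr}(D,d)$ is a Riemannian covering, $\operatorname{inj}(\mathbf{Gr}^+(D,d))\ge\operatorname{inj}(\mathbf{Gr}(D,d))=\pi/2$ follows even more cheaply. The price of your approach is that it needs this global fact about the Grassmannian rather than only the local (conjugate-point) one; what it buys is a complete argument in which every implication is justified.
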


\begin{proof}
The Euclidean space $\mathbb{R}^D$ with the standard metric has infinite conjugate radius. By \cite[Theorem~7.2]{bendokat2024grassmann}, the Grassmannian $\mathbf{Gr}(D,d)$ has conjugate radius greater than $\frac{\pi}{2}$. Since the oriented Grassmannian $\mathbf{Gr}^+(D,d)$ is a connected Riemannian covering of $\mathbf{Gr}(D,d)$, its conjugate radius is also greater than $\frac{\pi}{2}$. Under the scaled metric on $\mathbf{Gr}^+_c(D,d)$, the conjugate radius is greater than $\frac{\sqrt{c}\,\pi}{2}$. Hence, along any unit-speed geodesic in $\mathbb{R}^D \times \mathbf{Gr}^+_c(D,d)$, there is no conjugate point before time $\frac{\sqrt{c}\,\pi}{2}$. Thus, the existence of two distinct unit-speed geodesics with common endpoints at time $T$ implies $T \;\geq\; \frac{\sqrt{c}\,\pi}{2}$.
\end{proof}

\begin{proof}[Proof of Theorem~\ref{length_barcodes}]
Let $\mathsf{rch}_{\mathbb{R}^D \times \mathbf{Gr}_c^+(D,d)}(M,g_c) = T$. Since the sectional curvature of the Grassmannian $(\mathbf{Gr}(D,d), g^{\mathbf{Gr}(D,d)})$ is bounded above by $2$ \cite[Proposition~4.1]{bendokat2024grassmann}, the sectional curvature of the product manifold $\mathbb{R}^D \times \mathbf{Gr}^+_c(D,d)$ is bounded above by $\tfrac{2}{c}$. By Theorem~\ref{rauch}, if $T>0$ is a first focal time at some point $p \in M$, then
\[
T \;\geq\; \sqrt{\frac{c}{2}} \,\arctan \sqrt{\frac{2}{c\|\mathbf{II}_c\|_2}}.
\]
If there exist two distinct unit-speed distance-minimal geodesics between the same pair of points with length $T$, then by Theorem~\ref{inj_rad}, we have $T \geq \frac{\sqrt{c}\pi}{2}$. Otherwise, there exist points $q_1,q_2 \in M$ and a point $(p,P) \in \mathbb{R}^D \times \mathbf{Gr}_c^+(D,d)$, together with unit-speed distance-minimal geodesics $(\gamma_1,\Gamma_1)$ and $(\gamma_2,\Gamma_2)$ from $(q_1,\bar{\mathbf{g}}_c^+(q_1))$ and $(q_2,\bar{\mathbf{g}}_c^+(q_2))$ to $(p,P)$, respectively, each of length $T$. Since $\mathbb{R}^D \times \mathbf{Gr}_c^+(D,d)$ is complete, by \cite[Theorem~1]{singh1987cut}, the two geodesics meet at $(p,P)$ at angle $\pi$ and $(p,P)$ is one of the midpoints of the geodesic segment joining $(q_1,\bar{\mathbf{g}}_c^+(q_1))$ and 
$(q_2,\bar{\mathbf{g}}_c^+(q_2))$. By the first variational principle, the two geodesics $(\gamma_1,\Gamma_1)$ and $(\gamma_2,\Gamma_2)$ are orthogonal to $T_{(q_1,\bar{\mathbf{g}}_c^+(q_1))}(M,g_c)$ and $T_{(q_2,\bar{\mathbf{g}}_c^+(q_2))}(M,g_c)$, respectively. Therefore $T \geq L_c'(M)$ by the definition of $L_c'(M)$.

Combining these cases and using Theorem~\ref{cutlocus}, we obtain
\[
T \;\geq\; \min\!\left(
\sqrt{\frac{c}{2}} \,\arctan \sqrt{\frac{2}{c\|\mathbf{II}_c\|_2}},
\; \frac{\sqrt{c}\,\pi}{2},
\; L_c'(M)
\right).
\]
By the nerve lemma and Theorem~\ref{grad_flow}, for every $r$ satisfying
\[
0 < r < \min\!\left(
\sqrt{\frac{c}{2}} \,\arctan \sqrt{\frac{2}{c\|\mathbf{II}_c\|_2}},
\; \frac{\sqrt{c}\,\pi}{2},
\; {L_c}'(M)
\right),
\]
there are homotopy equivalences
\[
\check{C}(M, \mathbb{R}^D\times\mathbf{Gr}_c^+(D,d) ;r)
\;\simeq\;
\bigcup_{q \in M} B_{\mathbb{R}^D \times \mathbf{Gr}_c^+(D,d)}\bigl((q,\bar{\mathbf{g}}_c^+(q)) , r\bigr)
\;\simeq\;
M,
\]
which completes the proof.
\end{proof}

\begin{remark}
Theorem~\ref{principal_curvature} together with Remark~\ref{II_decrease} suggests that, for suitable
choices of $c>0$, the directions with large normal curvature are damped so that
$\|\mathbf{II}_c\|_2$ can decrease relative to $\mathrm{vol}_c(M)$ under the embedding of $M$ into $\mathbb{R}^D \times \mathbf{Gr}_c(D,d)$. In view of
Theorem~\ref{length_barcodes}, this suggests that for such $c$ the first term in the lower bound for
$T$ can become larger relative to the volume, compared to the Euclidean metric. 
\end{remark}

\begin{remark}
Theorem~\ref{normalized_bottleneck} shows that, in the case for which the reach of an orientable
hypersurface $M \subset \mathbb{R}^D$ is realized by a bottleneck, the distance between such
bottleneck pairs normalized by the volume grows. According to Lemma~\ref{normal_bundle}, there is no direct relation between $L_c(M)$ and $L_c'(M)$ in general,
since the normal directions at $p \in M$ may tilt when $M$ is embedded into
$\mathbb{R}^D \times \mathbf{Gr}_c^+(D,d)$. However, Lemma~\ref{normal_bundle} also implies that this
tilting is controlled by $c \|\mathbf{II}\|_{\operatorname{HS}}$. Thus, when $c \|\mathbf{II}\|_{\operatorname{HS}}$
is small, the normals are only slightly tilted and it is reasonable to expect $L_c(M)$ and $L_c'(M)$
to be close to each other. Thus, the bound for the width of bottlenecks of the hypersurface $M$ from Theorem~\ref{normalized_bottleneck}
remains valid as resolving such neck structures under this embedding.
\end{remark}

\begin{remark}
We provide a range of radii $r$ where the ambient Čech complex is homotopy equivalent to the underlying manifold $M$, according to Theorem~\ref{grad_flow}.
By Theorem~\ref{niyogi_cutlocus}, the same conclusion holds for the ambient Vietoris--Rips complexes
$\mathrm{VR}(\mathbf{Y}, \mathbb{R}^D \times \mathbf{Gr}_c^+(D,d); r)$ for any finite dense
subset $\mathbf{Y} \subset M$.
\end{remark}

\subsection{Stability}
To prove Theorem~\ref{stability}, we use the following perturbation bound for orthogonal projectors.

\begin{theorem}[{\cite[Theorem~4]{stewart1998perturbation}; modified}]\label{wedin}
Let $A,\widetilde A \in \mathbb{R}^{D\times d}$ be full-rank matrices with $\widetilde A = A + E$.
Let $P_A$ and $P_{\widetilde A}$ denote the orthogonal projectors onto $\mathrm{Im}(A)$ and $\mathrm{Im}(\widetilde A)$, respectively.
Let $\sigma_{\min}(A)$ be the smallest singular value of $A$, and suppose that $\|E\|_{2} \le \tfrac{1}{2}\sigma_{\min}(A)$.
Then there exists a constant $C>0$ such that
\[
  \| P_{\widetilde A} - P_A \|_{\operatorname{HS}}
  \le
  C\,\frac{\|E\|_{\operatorname{HS}}}{\sigma_{\min}(A)}.
\]
\end{theorem}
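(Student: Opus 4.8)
The plan is to reduce the bound to estimates on the two ``off-diagonal blocks'' of the projectors, each of which can be expressed directly in terms of $E$ together with a pseudoinverse.

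I would begin from the algebraic identity
\[
  P_{\widetilde A} - P_A \;=\; (I - P_A)\,P_{\widetilde A} \;-\; P_A\,(I - P_{\widetilde A}),
\]
obtained by expanding the right-hand side and cancelling $P_A P_{\widetilde A}$. The first term has range in $\operatorname{Im}(I-P_A)=\ker P_A$ and the second in $\operatorname{Im}(P_A)$, and since these two subspaces are orthogonal complements, the two terms are orthogonal with respect to the trace inner product, so
\[
  \|P_{\widetilde A} - P_A\|_{\operatorname{HS}}^2
  \;=\;
  \|(I-P_A)P_{\widetilde A}\|_{\operatorname{HS}}^2
  \;+\;
  \|P_A(I-P_{\widetilde A})\|_{\operatorname{HS}}^2 ;
\]
the plain triangle inequality would also suffice, at the cost of a slightly worse constant.

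Next I would estimate the first block. Writing $\widetilde A^{+}=(\widetilde A^\top\widetilde A)^{-1}\widetilde A^\top$ for the Moore--Penrose pseudoinverse, full rank of $\widetilde A$ gives $P_{\widetilde A}=\widetilde A\,\widetilde A^{+}$, and since $(I-P_A)A=0$ we get $(I-P_A)\widetilde A=(I-P_A)(A+E)=(I-P_A)E$. Hence
\[
  (I-P_A)P_{\widetilde A}=(I-P_A)E\,\widetilde A^{+},
  \qquad
  \|(I-P_A)P_{\widetilde A}\|_{\operatorname{HS}}
  \;\le\;
  \|E\|_{\operatorname{HS}}\,\|\widetilde A^{+}\|_{2}
  \;=\;
  \frac{\|E\|_{\operatorname{HS}}}{\sigma_{\min}(\widetilde A)} .
\]
By Weyl's inequality for singular values, $\sigma_{\min}(\widetilde A)\ge \sigma_{\min}(A)-\|E\|_{2}\ge \tfrac12\sigma_{\min}(A)>0$ (in particular $\widetilde A$ is full rank, justifying the previous step), so this block is at most $2\|E\|_{\operatorname{HS}}/\sigma_{\min}(A)$. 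Symmetrically, using $P_A=AA^{+}$ with $A^{+}=(A^\top A)^{-1}A^\top$, the relation $(I-P_{\widetilde A})A=(I-P_{\widetilde A})(\widetilde A-E)=-(I-P_{\widetilde A})E$, and the facts that transposition preserves the Hilbert--Schmidt norm and fixes both $P_A$ and $I-P_{\widetilde A}$, I would obtain
\[
  \|P_A(I-P_{\widetilde A})\|_{\operatorname{HS}}
  =\|(I-P_{\widetilde A})P_A\|_{\operatorname{HS}}
  =\|(I-P_{\widetilde A})E\,A^{+}\|_{\operatorname{HS}}
  \le \frac{\|E\|_{\operatorname{HS}}}{\sigma_{\min}(A)} .
\]
Combining the two blocks via the Pythagorean identity gives $\|P_{\widetilde A}-P_A\|_{\operatorname{HS}}\le \sqrt{5}\,\|E\|_{\operatorname{HS}}/\sigma_{\min}(A)$, so $C=\sqrt{5}$ works.

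There is no substantial obstacle here beyond bookkeeping; the only point that needs attention is that the natural bound for $(I-P_A)P_{\widetilde A}$ produces $\sigma_{\min}(\widetilde A)$ rather than $\sigma_{\min}(A)$ in the denominator, which is exactly what the hypothesis $\|E\|_{2}\le\tfrac12\sigma_{\min}(A)$ is for: it keeps $\widetilde A$ full rank and makes $\sigma_{\min}(\widetilde A)$ comparable to $\sigma_{\min}(A)$. If one wanted the sharp constant of Wedin's $\sin\Theta$ theorem one would instead diagonalize $P_{\widetilde A}-P_A$ via the CS decomposition and pass to principal angles, but for the qualitative bound stated here the elementary argument above suffices.
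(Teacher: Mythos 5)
Your proof is correct. The paper does not prove this statement but simply cites it from Stewart's perturbation theory, and your block decomposition $P_{\widetilde A}-P_A=(I-P_A)P_{\widetilde A}-P_A(I-P_{\widetilde A})$ together with the pseudoinverse bounds and Weyl's inequality is exactly the standard argument behind the cited result, yielding the explicit constant $C=\sqrt{5}$.
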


\begin{proof}[Proof of Theorem~\ref{stability}]
Let $F_i : M \to \mathbb{R}^D$ be smooth embeddings with $M_i = F_i(M) \subset \mathbb{R}^D$ for each $i \in \mathbb{N}$. Assume that there exists an embedded manifold $M_\infty = F_\infty(M) \subset \mathbb{R}^D$ such that $F_i \rightarrow F_\infty$ in the Whitney $C^1$ topology. Since $C^1$ convergence implies that $M_i$ converges to $M_\infty$ in the Hausdorff distance and $M_\infty$ is compact, there exists a compact subset $K \subset \mathbb{R}^D$ containing both $M_\infty$ and $M_i$ for every $i \in \mathbb{N}$.

Choose finitely many coordinate charts $\{(U_\alpha,\varphi_\alpha)\}_{\alpha=1}^m$ on $M$ and
coordinate charts $\{(V_\alpha,\psi_\alpha)\}_{\alpha=1}^m$ on $K$ such that
$F_\infty(U_\alpha) \subset V_\alpha$ for each $\alpha$. By the Whitney $C^1$ convergence
$F_i \to F_\infty$, for each $\alpha$ and each $\varepsilon_\alpha > 0$, there exists
$n_\alpha \in \mathbb{N}$ such that for every $i \geq n_\alpha$,
\[
\sup_{x \in \varphi_\alpha(U_\alpha)}
\big\|
(\psi_\alpha \circ F_i \circ \varphi_\alpha^{-1})(x)
-(\psi_\alpha \circ F_\infty \circ \varphi_\alpha^{-1})(x)
\big\|_{\mathbb{R}^D}
< \varepsilon_\alpha,
\]
and by the equivalences of the two norms $\|\cdot\|_2$ and $\|\cdot\|_{\operatorname{HS}}$,
\[
\sup_{x \in \varphi_\alpha(U_\alpha)}
\big\|
d(\psi_\alpha \circ F_i \circ \varphi_\alpha^{-1})(x)
-d(\psi_\alpha \circ F_\infty \circ \varphi_\alpha^{-1})(x)
\big\|_{\operatorname{HS}}
< \varepsilon_\alpha.
\]
Let $n= \max\{n_\alpha\}_{1 \leq \alpha \leq m}$ and $\varepsilon = \varepsilon(n)= \min\{\varepsilon_\alpha\}_{1\leq \alpha \leq m}$. Since $F_\infty : M \to K$ is an embedding and $M_\infty$ is compact, the differential
$d(\psi_\alpha \circ F_\infty \circ \varphi_\alpha^{-1})(x)$ has full rank and depends
continuously on $x$. Hence there exists a constant $\delta > 0$ such that, for every
$\alpha$ and every $x \in \varphi_\alpha(U_\alpha)$, the smallest singular value of
$d(\psi_\alpha \circ F_\infty \circ \varphi_\alpha^{-1})(x)$ is at least $\delta$.

Denote the principal angles between $\bar{\mathbf{g}}(F_i(q))$ and $\bar{\mathbf{g}}(F_\infty(q))$ by
$\theta_1(i,q) \geq \dots \geq \theta_d(i,q)$ for $i \geq n$ and $q \in M$. For a $d$–dimensional subspace $W \subset \mathbb{R}^D$ we write $P_W$ for the orthogonal
projector onto $W$. By Theorem~\ref{wedin} and the identity
$\bigl\|P_{\bar{\mathbf{g}}(F_i(q))} - P_{\bar{\mathbf{g}}(F_\infty(q))}\bigr\|_{\operatorname{HS}}^2
= 2\sum_{j=1}^d \sin^2\theta_j(i,q)$,
we obtain
\[
\sqrt{\,2\sum_{j=1}^d \sin^2\theta_j(i,q)\,} < \frac{C}{\delta}\varepsilon
\]
for some constant $C>0$. Hence
\[
d_{\mathbf{Gr}(D,d)}\!\bigl(\bar{\mathbf{g}}(F_i(q)),\bar{\mathbf{g}}(F_\infty(q))\bigr)
   = \sqrt{\sum_{j=1}^d \theta_j^2(i,q)}
   \;\le\; \frac{\pi}{2}\sqrt{\sum_{j=1}^d \sin^2\theta_j(i,q)}
   \;<\; \frac{C\pi}{2\sqrt{2}\delta}\varepsilon.
\]

Take $n$ large enough so that $\varepsilon < \tfrac{\sqrt{2} \delta}{C}$. Suppose that there exist $x_1, x_2 \in \varphi_\alpha (U_\alpha)$ such that
$$ \det (d\big(\psi_\alpha \!\circ\! F_i \!\circ\! \varphi_\alpha^{-1}\big)(x_1)^\top d\big(\psi_\alpha \!\circ\! F_\infty \!\circ\! \varphi_\alpha^{-1}\big)(x_1))>0 $$
but
$$ \det (d\big(\psi_\alpha \!\circ\! F_i \!\circ\! \varphi_\alpha^{-1}\big)(x_2)^\top d\big(\psi_\alpha \!\circ\! F_\infty \!\circ\! \varphi_\alpha^{-1}\big)(x_2))<0.$$
Since the map
\[
x \longmapsto
\det\!\Bigl(
d(\psi_\alpha \circ F_i \circ \varphi_\alpha^{-1})(x)^\top
d(\psi_\alpha \circ F_\infty \circ \varphi_\alpha^{-1})(x)
\Bigr)
\]
is continuous on $\varphi_\alpha(U_\alpha)$, there exists a point
$x_3 \in \varphi_\alpha(U_\alpha)$ such that
$$ \det (d\big(\psi_\alpha \!\circ\! F_i \!\circ\! \varphi_\alpha^{-1}\big)(x_3)^\top d\big(\psi_\alpha \!\circ\! F_\infty \!\circ\! \varphi_\alpha^{-1}\big)(x_3))=0,$$
so that
$$d_{\mathbf{Gr}(D,d)}(\bar{\mathbf{g}}(F_i(\varphi_\alpha(x_3))) , \bar{\mathbf{g}}(F_\infty(\varphi_\alpha(x_3)))) \geq \frac{\pi}{2},$$
which contradicts the assumption that $\varepsilon < \tfrac{\sqrt{2} \delta}{C}$. Therefore, we may choose orientations for each $F_i(M)$ with
$$ \det (d\big(\psi_\alpha \!\circ\! F_i \!\circ\! \varphi_\alpha^{-1}\big)(x)^\top d\big(\psi_\alpha \!\circ\! F_\infty \!\circ\! \varphi_\alpha^{-1}\big)(x))>0 $$
for every $x \in \varphi_\alpha(U_\alpha)$ and $i \geq n$, and hence
$$d_{\mathbf{Gr}^+(D,d)}(\bar{\mathbf{g}}^+(F_i(q)) , \bar{\mathbf{g}}^+(F_\infty(q))) = d_{\mathbf{Gr}(D,d)}(\bar{\mathbf{g}}(F_i(q)) , \bar{\mathbf{g}}(F_\infty(q)))$$
for every $q \in M$. 

Therefore, for every $q\in M$ and $i \geq n$, 
$$d_c(F_i(q),F_\infty(q)) < \varepsilon\sqrt{1+c \frac{C^2 \pi^2}{8\delta^2}}.$$
In particular,
$$d^{\mathbb{R}^D \times \mathbf{Gr}_c^+(D,d)}_H(F_i(M), F_\infty (M)) < \varepsilon\sqrt{1+c \frac{C^2 \pi^2}{8\delta^2}}.$$
Applying Theorem~\ref{stability_theorem} completes the proof.
\end{proof}

\section{Computational examples}

The estimation of tangent spaces of an underlying smooth manifold via local PCA \cite{kambhatla1997dimension,abdi2010principal} has recently emerged in TDA. In one approach, local PCA is used to estimate tangent-normal coordinates at each point, and the resulting ellipsoids are incorporated into a modified filtration metric \cite{kalisnik2020finding,kalivsnik2024persistent}. On the other hand, local PCA has also been employed to approximate characteristic classes of vector bundles over the underlying manifold \cite{scoccola2023approximate,bohlsen2025counting,turow2025discrete}. In \cite{tinarrage2023recovering}, locally estimated tangent spaces are used to recover the homology of immersed smooth manifolds with self-intersections. In \cite{bauer2023cycling}, a unit vector field of the flow is used to compute a filtration metric to detect a cycling behavior in a cubical setting. 

We outline a procedure to estimate the distance $d_c$ in Definition~\ref{grassmannian_distance} using local PCA. First, we choose a finite subset $\mathbf{X} \subset M$ of the smooth closed orientable $d$-dimensional manifold $M \subset \mathbb{R}^D$ to estimate its tangent bundle. Next, we select a subset $\mathbf{Y} \subset \mathbf{X}$ to build a Vietoris--Rips filtration. With neighborhood size chosen as $k \sim |\mathbf{X}|^{2/(n+2)}$, 
we perform local PCA at each point $x \in \mathbf{X}$ using its k-nearest neighbors to obtain an orthonormal basis for the estimated tangent space at $x$.

\begin{theorem}[{\cite[Theorem~B.1]{singer2012vector}}]\label{tangentspace}
Let $\varepsilon = \varepsilon(n)$ be a sequence of positive numbers with
$\varepsilon = O\bigl(n^{-\frac{1}{d+2}}\bigr)$. Assume that for each
$x_i \in \mathbf{X}$, the set $\bigl\{y \in \mathbf{X} \mid d_{\mathbb{R}^D}(x_i,y) \leq \varepsilon\bigr\}$
contains at least $d$ points, and let $\widehat{T_{x_i}M}$ be the tangent space
at $x_i$ estimated from this neighborhood by local PCA. Then, with high
probability,
\[
d_{\mathbf{Gr}(D,d)}\bigl(T_{x_i}M,\widehat{T_{x_i}M}\bigr)
= O\bigl(\varepsilon^{\frac{3}{2}}\bigr)
= O\bigl(n^{-\frac{3}{2(d+2)}}\bigr).
\]
\end{theorem}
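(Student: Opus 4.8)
The plan is to reduce to a single base point $x := x_i$ and estimate the principal angles between $T_xM$ and the subspace returned by local PCA, by treating $M$ near $x$ as a graph over its tangent space. Recentre at $x$, fix an orthonormal splitting $\mathbb{R}^D = T_xM\oplus\nu_xM$, and write a neighbourhood of $x$ in $M$ as the graph $u\mapsto(u,\psi(u))$ of a smooth map $\psi:T_xM\to\nu_xM$ with $\psi(0)=0$, $D\psi(0)=0$, whose quadratic Taylor term is the second fundamental form (cf.\ Lemma~\ref{grass_tangent}): $\psi(u)=\tfrac{1}{2}\mathbf{II}_x(u,u)+O(\|u\|^3)$, with $\|\mathbf{II}_x\|_2\le\|\mathbf{II}\|_2<\infty$ uniformly by compactness of $M$. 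A sample point $y$ with $\|y-x\|_{\mathbb{R}^D}\le\varepsilon$ then has tangential part $u$ with $\|u\|\le\varepsilon$, up to an $O(\varepsilon^2)$ multiplicative distortion of the parameter domain.

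First I would analyse the \emph{population} local covariance $\Sigma_\varepsilon = \mathbb{E}\big[(Y-\mathbb{E}Y)(Y-\mathbb{E}Y)^\top \,\big|\, \|Y-x\|_{\mathbb{R}^D}\le\varepsilon\big]$, with $Y$ drawn from the sampling density (assumed bounded above and below on $M$) restricted to the ball, in the block form induced by $T_xM\oplus\nu_xM$: the tangential block is $\asymp\varepsilon^2 I_d$, the normal block is $O(\varepsilon^4)$ because $\psi$ is quadratic to leading order, and the off-diagonal block has order strictly below $\varepsilon^2$, the leading cross term $\int u\otimes\mathbf{II}_x(u,u)\,du$ vanishing by the symmetry $u\mapsto-u$ of the ball. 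Hence $\Sigma_\varepsilon$ has a spectral gap $\gtrsim\varepsilon^2$ between its $d$-th and $(d{+}1)$-th eigenvalues, and its top-$d$ eigenspace makes angle $O(\varepsilon^2)$ with $T_xM$. For the stochastic part, the number $k$ of sample points in $B_{\mathbb{R}^D}(x,\varepsilon)$ concentrates around $\asymp n\varepsilon^d$ by a binomial tail bound (so $k\ge d$ with high probability, as needed for the PCA to be defined), a matrix Bernstein inequality gives $\|\widehat\Sigma-\Sigma_\varepsilon\|_2 = O\big(\varepsilon^2(n\varepsilon^d)^{-1/2}\big)$ with high probability, and Weyl's inequality keeps the empirical gap $\gtrsim\varepsilon^2$. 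A Davis--Kahan $\sin\Theta$ bound (the spectral counterpart of Theorem~\ref{wedin}, applied to $\widehat\Sigma$) then controls the principal angles between $\widehat{T_xM}$ and $T_xM$ by the geometric bias plus the sampling perturbation divided by the gap; inserting $\varepsilon\asymp n^{-1/(d+2)}$ and carrying out the bias--variance bookkeeping as in \cite{singer2012vector} yields the rate $O(\varepsilon^{3/2})$, and since $d_{\mathbf{Gr}(D,d)}(T_xM,\widehat{T_xM}) = \sqrt{\sum_j\theta_j^2}\le\tfrac{\pi}{2}\sqrt{\sum_j\sin^2\theta_j}$, a union bound over the finitely many $x_i\in\mathbf{X}$ gives the claim $O(\varepsilon^{3/2}) = O(n^{-3/(2(d+2))})$.

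The main obstacle is the spectral-gap control. One must show, uniformly over $x\in M$, that the tangential block of $\Sigma_\varepsilon$ is $\asymp\varepsilon^2$ while the normal block is $O(\varepsilon^4)$ — this is exactly where compactness of $M$, the bound on $\mathbf{II}$, and the two-sided density bound are used — and then verify that this $\asymp\varepsilon^2$ gap survives both the $O(\varepsilon^2)$ quadratic geometric perturbation, which makes $T_xM$ only an approximate eigenspace of $\Sigma_\varepsilon$, and the $O\big(\varepsilon^2(n\varepsilon^d)^{-1/2}\big)$ sampling noise, because any Davis--Kahan-type bound degrades like the reciprocal of the gap. A related subtlety is that what enters the subspace-angle estimate is only the tangential--normal (off-diagonal) part of $\widehat\Sigma-\Sigma_\varepsilon$ rather than its full operator norm; distinguishing these contributions is what pins down the exponent $\tfrac{3}{2}$ in the final rate.
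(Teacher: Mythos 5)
The first thing to note is that the paper does not prove Theorem~\ref{tangentspace}: it is imported verbatim from \cite[Theorem~B.1]{singer2012vector} and used as a black box to justify the local-PCA step of the computational pipeline, so there is no in-paper proof to compare against. Measured against the argument in the cited source, your outline follows the same architecture — write $M$ near $x_i$ as a graph over $T_{x_i}M$ with quadratic term $\tfrac12\mathbf{II}_{x_i}(u,u)$, analyse the local covariance in the block decomposition $T_{x_i}M\oplus\nu_{x_i}M$, establish an $\asymp\varepsilon^2$ spectral gap, and combine a Davis--Kahan bound with matrix concentration and a union bound over $\mathbf{X}$.

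The gap is in the only step that actually produces the stated rate. With the orders you give, the population bias yields a subspace angle of order $\varepsilon^2$, while the sampling fluctuation $\|\widehat\Sigma-\Sigma_\varepsilon\|_2=O\bigl(\varepsilon^2(n\varepsilon^d)^{-1/2}\bigr)$ divided by the gap $\asymp\varepsilon^2$ gives $(n\varepsilon^d)^{-1/2}$; since $\varepsilon\asymp n^{-1/(d+2)}$ forces $n\varepsilon^d\asymp\varepsilon^{-2}$, this is $O(\varepsilon)$, not $O(\varepsilon^{3/2})$. You correctly identify that the exponent must come from bounding only the tangential--normal block of $\widehat\Sigma-\Sigma_\varepsilon$ (whose summands are $O(\varepsilon^3)$ rather than $O(\varepsilon^2)$, because the normal coordinate of each sample is $O(\varepsilon^2)$), but you then defer this ``bias--variance bookkeeping'' to \cite{singer2012vector} itself, i.e.\ to the result being proved, so the decisive computation is absent. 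To close the argument you would need to carry out that block-wise bound, control the cubic Taylor remainder of $\psi$ that survives the $u\mapsto-u$ symmetrization, and make explicit the sampling model under which ``with high probability'' is meant — the theorem as stated in the paper carries no distributional hypothesis, whereas your concentration step needs i.i.d.\ sampling from a density bounded above and below. (A further caveat: in \cite{singer2012vector} the rate $3/2$ is an exponent of the bandwidth parameter $\epsilon_{PCA}$, whose square root is the neighbourhood radius, so the target exponent you are reverse-engineering may itself reflect a change of variables rather than the bound your bookkeeping should naturally produce.) As an outline of the right strategy the proposal is sound; as a proof it is not self-contained.
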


For each $x \in M$, fix an ordered orthonormal basis for $T_xM$ compatible with the given
orientation on $M$, and let $B_x$ be the matrix whose columns are this basis. Let
$\widehat{T_xM}$ be the estimated tangent space at $x$, and choose an ordered orthonormal basis for
$\widehat{T_xM}$ so that it induces the same orientation as $T_xM$. Denote the corresponding basis matrix by
$\widehat{B_x}$. To establish that such orientations can be chosen consistently across tangent spaces, we prove the following theorem.
\begin{theorem}\label{orientation}
    Let $M \subset \mathbb{R}^D$ be a smooth closed connected orientable $d$-dimensional submanifold with $\mathsf{rch}_{\mathbb{R}^D}(M) = \tau >0$. If two points $p,q \in M$ satisfy $\|p-q\|_{\mathbb{R}^D} < \frac{\tau}{2}$, then
    $$ \det (B_p^\top B_q ) > 0.$$
\end{theorem}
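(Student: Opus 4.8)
The plan is to join $p$ and $q$ by a path in $M$ along which the oriented tangent spaces never rotate by a right angle, and to propagate the sign of $\det\bigl(B_{p}^{\top}B_{x}\bigr)$ from $x=p$ to $x=q$ by continuity. Observe first that $\det\bigl(B_{p}^{\top}B_{q}\bigr)$ does not depend on the chosen orthonormal basis matrices: replacing $B_{p},B_{q}$ by $B_{p}R_{p},B_{q}R_{q}$ with $R_{p},R_{q}\in SO(d)$ multiplies the determinant by $\det(R_{p})\det(R_{q})=1$, so its sign depends only on $p$, $q$ and the fixed orientation of $M$. Since $M$ is compact and connected, there is a minimizing unit-speed geodesic $\gamma:[0,L]\to M$ of the induced metric from $p$ to $q$, with $L=d_{M}(p,q)$; as $[0,L]$ is contractible and $M$ is orientable, we may choose orthonormal basis matrices $B_{\gamma(s)}$ for $T_{\gamma(s)}M$ compatible with the orientation of $M$ and depending continuously on $s$, with $B_{\gamma(0)}=B_{p}$. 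Put $f(s)=\det\!\bigl(B_{p}^{\top}B_{\gamma(s)}\bigr)$. The singular values of $B_{p}^{\top}B_{\gamma(s)}$ are the cosines of the principal angles between $T_{p}M$ and $T_{\gamma(s)}M$, so $f(s)=0$ precisely when the largest principal angle $\theta_{\max}\bigl(T_{p}M,T_{\gamma(s)}M\bigr)$ equals $\tfrac{\pi}{2}$, i.e.\ when $\bigl\|P_{T_{p}M}-P_{T_{\gamma(s)}M}\bigr\|_{2}=1$, where $P_{W}$ denotes the orthogonal projector onto $W$. Hence, since $f$ is continuous with $f(0)=\det I=1$, it suffices to prove $\bigl\|P_{T_{p}M}-P_{T_{\gamma(s)}M}\bigr\|_{2}<1$ for all $s\in[0,L]$; then $f$ is nowhere zero on $[0,L]$, so $f(L)>0$, and $\det\bigl(B_{p}^{\top}B_{q}\bigr)=f(L)$ since $B_{\gamma(L)}$ and $B_{q}$ span $T_{q}M$ with the same orientation and hence differ by an element of $SO(d)$.

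For the required bound I would combine two reach estimates. First, by Theorem~\ref{reach} the reach $\tau$ bounds the normal curvature of $M\subset\mathbb{R}^{D}$, so $\|\mathbf{II}\|_{2}\le 1/\tau$. Writing $P(s)=P_{T_{\gamma(s)}M}$ and differentiating $P(s)V(s)=V(s)$ for a tangential vector field $V$ along $\gamma$, the Gauss formula gives $\dot P(s)v=\mathbf{II}_{\gamma(s)}\bigl(\gamma'(s),v\bigr)$ for $v\in T_{\gamma(s)}M$, while $\dot P(s)$ acts on the normal space through the Weingarten map, so $\|\dot P(s)\|_{2}=\sup_{\|v\|_{2}=1,\,v\in T_{\gamma(s)}M}\|\mathbf{II}_{\gamma(s)}(\gamma'(s),v)\|_{2}$; since for a symmetric normal-valued bilinear form one has $\sup_{\|v\|_{2}=\|w\|_{2}=1}\|\mathbf{II}_{p}(v,w)\|_{2}=\|\mathbf{II}_{p}\|_{2}$ (numerical radius equals operator norm for self-adjoint operators), this yields the pointwise estimate $\|\dot P(s)\|_{2}\le\|\mathbf{II}_{\gamma(s)}\|_{2}\le 1/\tau$. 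Second, by the standard geodesic-distance estimate for submanifolds of reach $\tau$ (\cite{niyogi2008finding, federer1959curvature}), the hypothesis $\|p-q\|_{\mathbb{R}^{D}}<\tfrac{\tau}{2}$ forces $L=d_{M}(p,q)<\tau$. Combining,
\[
\bigl\|P_{T_{p}M}-P_{T_{\gamma(s)}M}\bigr\|_{2}
=\Bigl\|\int_{0}^{s}\dot P(u)\,du\Bigr\|_{2}
\le\int_{0}^{s}\|\dot P(u)\|_{2}\,du
\le\frac{s}{\tau}\le\frac{L}{\tau}<1
\qquad(0\le s\le L),
\]
which is exactly the inequality needed in the first step, so $\det\bigl(B_{p}^{\top}B_{q}\bigr)=f(L)>0$.

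I expect the main obstacle to be the tangent-projection derivative bound $\|\dot P(s)\|_{2}\le 1/\tau$: it rests on identifying $\dot P$ along $\gamma$ with the second fundamental form and on the fact that the operator norm of $\mathbf{II}_{p}$ as a bilinear map equals its numerical radius $\|\mathbf{II}_{p}\|_{2}$, without which a dimension-dependent constant enters and the threshold $\tau/2$ would have to be lowered. A cleaner variant that sidesteps this computation replaces $\gamma$ by the path $t\mapsto\xi\bigl((1-t)p+tq\bigr)$, where $\xi$ is the nearest-point projection onto $M$ (well-defined since the segment $[p,q]$ lies within $\tfrac12\|p-q\|_{\mathbb{R}^{D}}<\tfrac{\tau}{4}$ of $M$): every point $x$ on this path satisfies $\|p-x\|_{\mathbb{R}^{D}}<\tfrac{3\tau}{4}<\tau$, so the classical tangent-variation bound $\sin\theta_{\max}(T_{p}M,T_{x}M)\le\|p-x\|_{\mathbb{R}^{D}}/\tau$ (\cite{aamari2019estimating}) again keeps the largest principal angle below $\tfrac{\pi}{2}$, and the same continuity argument on $\det\bigl(B_{p}^{\top}B_{x}\bigr)$ applies.
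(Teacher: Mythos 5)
Your proposal is correct and follows essentially the same route as the paper: both arguments propagate the sign of $\det(B_p^\top B_{\gamma(s)})$ by continuity along a minimizing geodesic and reduce the problem to showing the largest principal angle between $T_pM$ and $T_{\gamma(s)}M$ stays below $\tfrac{\pi}{2}$, using the chord-to-geodesic bound to get $d_M(p,q)<\tau$. The only difference is that the paper cites the Niyogi--Smale--Weinberger estimate $\cos\theta_1 \ge 1 - d_M(p,q)/\tau$ directly, whereas you re-derive the equivalent bound by integrating $\|\dot P(s)\|_2 \le \|\mathbf{II}\|_2 \le 1/\tau$; your derivation of that projector estimate is sound.
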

We postpone the proof to Appendix~\ref{orientation_proof}. If two points $x_1$ and $x_2$ in $\mathbf{X}$ are adjacent and the basis matrix $\widehat{B_{x_1}}$ is oriented, then we orient $\widehat{B_{x_2}}$ by flipping the sign of its last row whenever $\det (\widehat{B_{x_1}}^\top \widehat{B_{x_2}})<0$. This yields a consistent orientation of the estimated tangent spaces over $\mathbf{X}$. We choose $c > 0$ so that the diameters of $M$ and $\mathbf{Gr}_c^+(D,d)$ coincide.
Approximating the diameter of $M$ by $\widehat{\mathrm{diam}}(M) = \mathrm{diam}(\mathbf{Y})$ and using $\mathrm{diam}(\mathbf{Gr}^+(D,d)) = \max\bigl(\pi, \frac{\pi}{2}\,\sqrt{\min(d,D-d)}\bigr)$ from \cite[Theorem 12.6]{kozlov2000geometry}, we set
\begin{equation*}\label{parameter}
c = \frac{\mathrm{diam}(\mathbf{Y})^2}{\max(\pi, \frac{\pi}{2}\sqrt{\min(d,D-d)})^2}.
\end{equation*}
We then compute the distance matrix \(\mathbf{D}\) with entries
\begin{equation*}\label{grass_distance}
\bigl(\mathbf{D}\bigr)_{ij}  = \sqrt{\|y_i - y_j\|_{\mathbb{R}^D}^2 + c \, d_{\mathbf{Gr}^+(D,d)}(\widehat{T_{y_i}M},\widehat{T_{y_j}M})^2}
\end{equation*}
for $1\leq i,j \leq |\mathbf{Y}|$ and perform Vietoris--Rips persistent homology with respect to \(\mathbf{D}\).
We compute persistent homology using the Ripser library~\cite{bauer2021ripser}, with coefficients in $\mathbb{Z}/2$.

\subsection{Time-delay embedding of the double-gyre attractor}
The driven double-gyre system in $(x,y) \in [0,2] \times [0,1]$ is specified by the stream function
\begin{equation}\label{doublegyre}
    \phi(x, y, t) = C \sin (\pi f(x,t)) \sin (\pi y),\quad f(x,t) = (\eta \sin (\omega t)) x^2 + (1-2\eta \sin (\omega t)) x.
\end{equation}
This system was first introduced in \cite{shadden2005definition} to describe geophysical flows in oceanic systems \cite{coulliette2001intergyre, poje1999geometry}.
For a continuous dynamical system $(M, \phi)$, a compact invariant subset $A \subset M$ is called an \emph{attractor} if there exists an open neighborhood $U \subset M$ of $A$ such that 
$$A = \bigcap_{t \geq 0} \{\phi(p,t)\,:\,p\in  U\}.$$
For more details, see \cite{smale1967differentiable} or \cite{perea2019topological}. We choose $20000$ points from the system~\eqref{doublegyre} with times uniformly distributed over $[0,10000]$. The parameters are $C=0.1$, $\eta=0.1$, $\omega=\pi/5$ and the initial point is $(x_0,y_0)=(0.5,0.625)$, for which the trajectory forms a torus.

Takens's theorem \cite{takens2006detecting} implies that, for a generic observable $f:M \to \mathbb{R}$, the topology of a smooth attractor can be recovered. For a trajectory $\{x(t)\}_{t \geq 0}$ in a system, the \emph{time-delay embedding} or \emph{sliding window} with delay $\tau>0$ and dimension $m \geq 1$ is
$$SW(f;\tau,m) = \bigl\{ \bigl(f(x(t)), f(x(t+\tau)), \dots, f(x(t+(m-1)\tau))\bigr) \in \mathbb{R}^m \mid t \in [0,T] \bigr\}.$$
Following \cite{charo2020topology}, we apply this to the system~\eqref{doublegyre} with the observable $f(x,y)=x$, delay $\tau = 5$, and $m=4$, and obtain a finite subset of $\mathbb{R}^4$. The left part of Figure~\ref{doublegyre_image} illustrates the projection of this subset onto its first three coordinates.

We randomly select 1000 points from the subset and compute two persistence diagrams, one using the Euclidean distance and the other using the distance $d_c$ in Definition~\ref{grassmannian_distance}. The middle and right parts of Figure~\ref{doublegyre_image} display these diagrams. For the Euclidean case the shorter $1$-cocycle is not detected, whereas the diagram computed from the distance $d_c$ exhibits both one-dimensional homology classes and the unique two-dimensional homology class of the torus.
\begin{figure}
  \centering
  \includegraphics[scale=0.4]{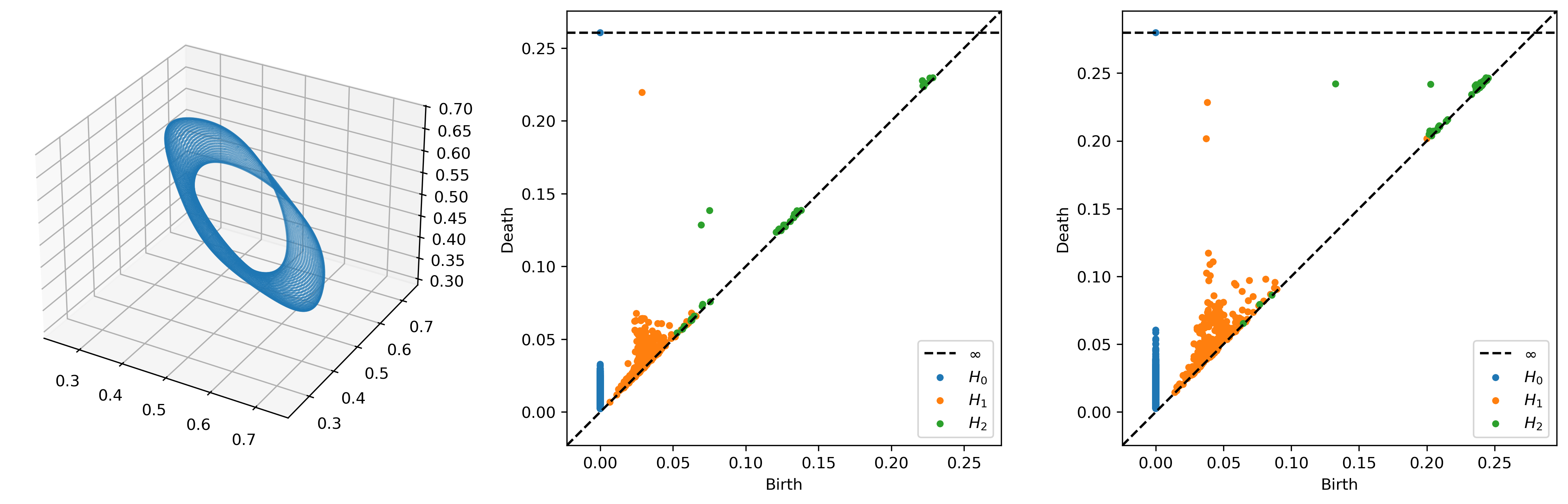}
  \caption{Projection of the time-delay embedding of the double-gyre system onto the first three coordinates (left); persistence diagrams for a subsample of time-delay embedded points with the Euclidean distance (middle) and with the distance $d_c$ (right).}
  \label{doublegyre_image}
\end{figure}

\subsection{Approximate quasi-halo orbit in the Saturn--Enceladus system}

The circular restricted three-body problem (CR3BP) models the motion of a massless particle under the gravitational attraction of two massive bodies moving on circular orbits \cite{szebehely2012theory, frauenfelder2018restricted}. In a rotating frame, the two primaries are fixed, and this system admits five equilibrium points, called \emph{Lagrange points}. Near the collinear points $L_1$ and $L_2$, there exist non-planar periodic orbits called \emph{halo orbits} \cite{connor1984three}. 

For a dynamical system $(M,\phi)$ and the $k$-torus $\mathbb{T}^k$, a compact invariant submanifold $T \subset M$ is called a \emph{quasi-periodic torus} of the flow $\{\phi_t\}_{t \geq 0}$, if there exist a homeomorphism 
$h : T \to \mathbb{T}^k$ and a vector $\omega \in \mathbb{R}^k$ with rationally 
independent components such that $h(\phi_t(x)) = h(x) + t\omega$ in $\mathbb{T}^k$ for every $t \geq 0$ and $x \in T$. We perturb a halo orbit near $L_2$ in the Saturn--Enceladus system, and obtain a Liouville torus that approximates the quasi-halo orbit.

For the Saturn--Enceladus system, we work in the phase space $\mathbb{R}^6 = \{(q_1,q_2,q_3,p_1,p_2,p_3) \mid q_i,p_i \in \mathbb{R},\, 1\leq i\leq 3\}$. We approximate the quasi-halo orbit whose closure is diffeomorphic to the 3-torus $\mathbb{T}^3 = S^1 \times S^1 \times S^1$ embedded in $\mathbb{R}^6$. Applying the surface-of-section method \cite{poincare1893methodes,meiss2007differential}, we collect the transverse intersections of this orbit with the section $\{q_1 > L_2, q_2 = 0\} \subset \mathbb{R}^6$. The support of the resulting point set is diffeomorphic to the 2-torus $\mathbb{T}^2 = S^1 \times S^1$ lying in $\mathbb{R}^5$.

We choose 4000 points from this subset. Since the central halo orbit of the quasi-periodic trajectory is close to degeneracy, the invariant torus exhibits significant distortions. Figure~\ref{quasihalo_image} shows the degree-$0$ and degree-$1$ persistence diagrams for this subset, computed using both the Euclidean distance and the distance $d_c$ in Definition~\ref{grassmannian_distance}. Because of the narrow bottleneck and large curvature of the torus, the persistence diagram based on the Euclidean distance contains only one prominent one-dimensional homology class, whereas the diagram based on $d_c$ exhibits both.

\begin{figure}
  \centering
  \includegraphics[scale=0.4]{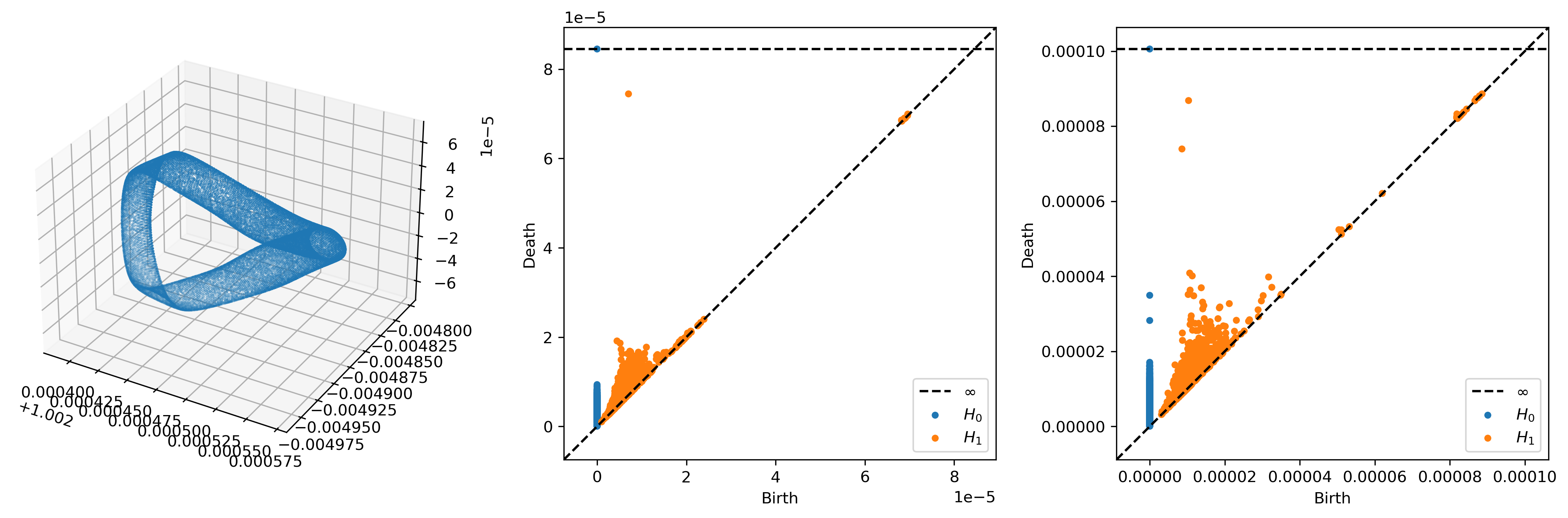}
  \caption{Three-dimensional projection of the points on the surface of section for the approximate quasi-halo orbit (left); degree-$0$ and degree-$1$ persistence diagrams for a subsample of these points, computed with the Euclidean distance (middle) and with the distance $d_c$ (right).}
  \label{quasihalo_image}
\end{figure}

\subsection{3D image shape}
Topological data analysis is widely used in machine learning to characterize the geometric structure of datasets. Recent work integrates TDA into learning pipelines for classification, clustering, and anomaly detection \cite{hensel2021survey, tauzin2021giotto}.
We use the ModelNet40 dataset \cite{wu20153d}, which contains 12,311 CAD models across 40 categories. From this dataset we extract one finite subset from the car class and one from the vase class, which we display in the left part of Figure~\ref{3dimage}. Observe that the global structure of the point cloud of the car class appears closer to spherical than that of the vase class.

\begin{figure}
  \centering
  \includegraphics[scale = 0.3]{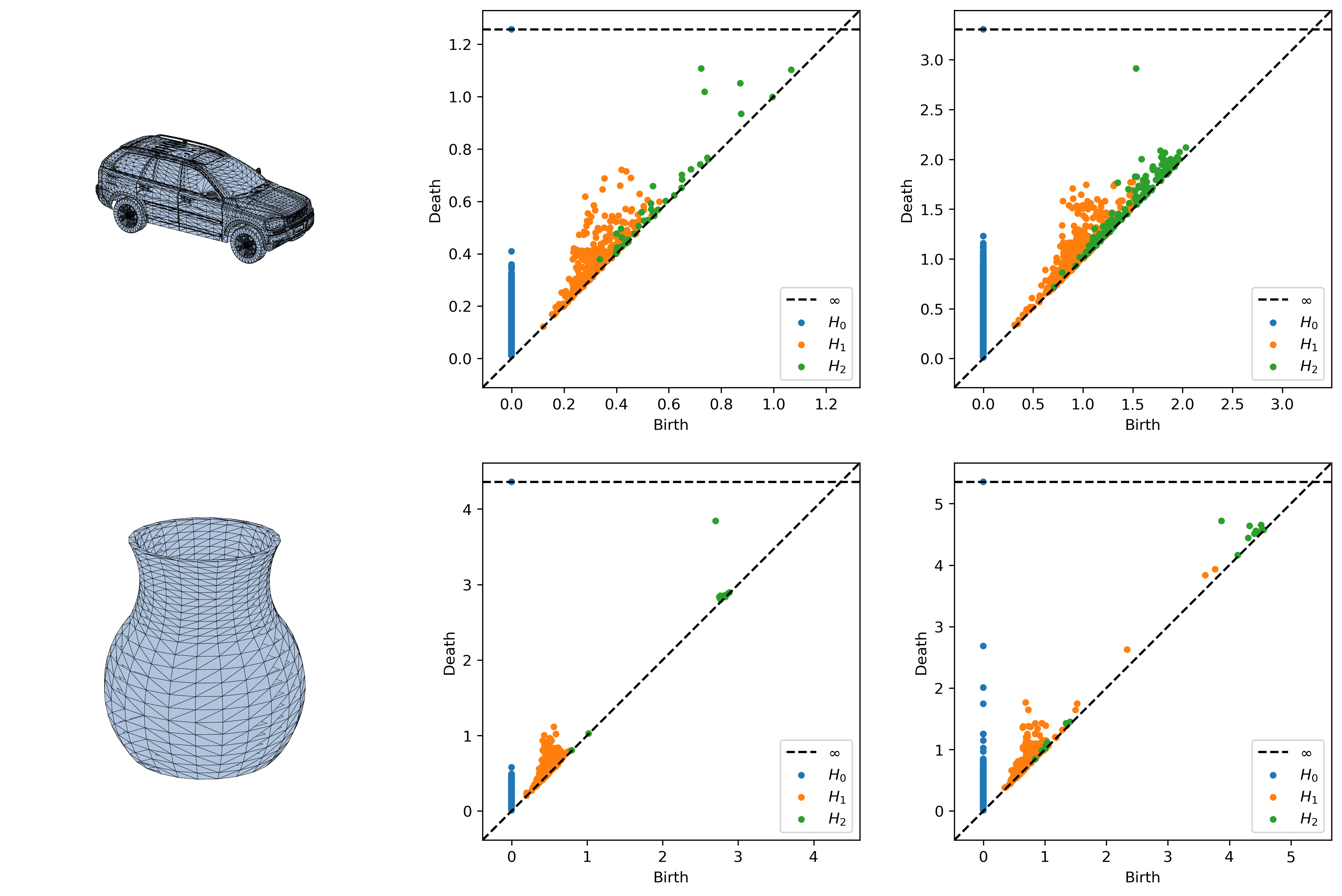}
\caption{Visualizations of the 3D point clouds (left), and persistence diagrams of each dataset computed with the Euclidean distance (middle) and with the distance $d_c$ (right).}
  \label{3dimage}
\end{figure}

From each finite subset we sample 800 points to compute persistence diagrams with respect to both distances. To capture the coarse global structure, we only use these 800 points to estimate the tangent spaces, in contrast to the previous examples. The middle and right parts of Figure~\ref{3dimage} display the resulting persistence diagrams. While the Euclidean distance does not separate the two datasets, the distance $d_c$ in Definition~\ref{grassmannian_distance} reveals a nontrivial two-dimensional homology class for the car dataset. 

\newpage
\appendix
\section{Proof of Theorem~\ref{log_II_derivative_bound}}
\begin{proof}[Proof of Theorem~\ref{log_II_derivative_bound}]\label{proof_log_II}
Note that, by the Leibniz rule,
\begin{align*}
  \nabla_v^M (\|\mathbf{II}(v,\cdot)\|_{\operatorname{HS}}^2) &= \nabla_v^M  \langle \mathbf{II}(v,\cdot), \mathbf{II}(v,\cdot) \rangle_{\operatorname{HS}}
  \\&= 2 \langle (\nabla_v^M \mathbf{II})(v,\cdot), \mathbf{II}(v,\cdot) \rangle_{\operatorname{HS}}
\end{align*}
where we used $\nabla_v^M v = 0$. On the other hand, 
$$\nabla_v^M (\|\mathbf{II}(v,\cdot)\|_{\operatorname{HS}}^2) = 2 \cdot \|\mathbf{II}(v,\cdot)\|_{\operatorname{HS}} \cdot \nabla_v^M (\|\mathbf{II}(v,\cdot)\|_{\operatorname{HS}}).$$
Therefore,
$$\nabla_v^M (\|\mathbf{II}(v,\cdot)\|_{\operatorname{HS}}) = \frac{\langle (\nabla_v^M \mathbf{II})(v,\cdot), \mathbf{II}(v,\cdot) \rangle_{\operatorname{HS}}}{\|\mathbf{II}(v,\cdot)\|_{\operatorname{HS}}}.$$
Hence,
\begin{align*}
\nabla_v^M (\log \|\mathbf{II}(v,\cdot)\|_{\operatorname{HS}}) &= \frac{\nabla_v^M (\|\mathbf{II}(v,\cdot)\|_{\operatorname{HS}})}{\|\mathbf{II}(v,\cdot)\|_{\operatorname{HS}}}
\\&= \frac{\langle (\nabla_v^M \mathbf{II})(v,\cdot), \mathbf{II}(v,\cdot) \rangle_{\operatorname{HS}}}{\|\mathbf{II}(v,\cdot)\|_{\operatorname{HS}}^2}
\end{align*}
and applying the Cauchy--Schwarz inequality gives the desired result.
\end{proof}

\section{Details of Example~\ref{torus}}\label{appendix:torus-proof}
We assume that \(\frac{r}{R} \le \frac{1}{m}\) for some \(m\ge 2\), and set $c = (R-r)^2.$
Consider the loops
\[
\gamma_1 = \{(R\cos u, R\sin u, r) : u \in [0,2\pi]\},\quad
\gamma_2 = \{(R\cos u, R\sin u, -r) : u \in [0,2\pi]\},
\]
\[
\gamma_3 = \{(R+r\cos v, 0, r\sin v) : v \in [0,2\pi]\}.
\]
Under the identification \(\pi_1(T^2)\cong\mathbb{Z}^2\), we take \([\gamma_1]=(1,0)\) and \([\gamma_3]=(0,1)\).
The shortest Euclidean representative of \([\gamma_1]\) is
\[
\gamma_4 = \{((R-r)\cos u,(R-r)\sin u,0): u\in[0,2\pi]\},
\]
whose Euclidean length is \(2\pi(R-r)\).
Hence any loop \(\gamma\subset T^2\) with \([\gamma]=(a,b)\) and \(|a|\ge 1\) satisfies
\[
\mathrm{len}_c(\gamma)\ge\mathrm{len}(\gamma)\ge 2\pi(R-r),
\]
where $\mathrm{len}_c$ and $\mathrm{len}$ denote the length of a loop in $(T^2,g_c)$ and in $T^2 \subset \mathbb{R}^3$, respectively. Next, we consider a loop \(\delta\subset T^2\) with \([\delta]=(0,b)\) and \(|b|\ge1\).
Such a loop meets both \(\gamma_1\) and \(\gamma_2\).
Along \(\delta\), the Gauss map \(\bar{\mathbf{g}}^+\) traverses an arc in \(S^2\) joining the antipodal points
\(\bar{\mathbf{g}}^+(\gamma_1)\) and \(\bar{\mathbf{g}}^+(\gamma_2)\), so the Grassmannian component of \(\delta\) has length at least \(2\pi\).
For the product metric, this implies
\[
\mathrm{len}_c(\delta)\ge 2\pi\sqrt{c}.
\]
Combining these two cases, we obtain $\mathrm{sys}_c(T^2)\ge \min\bigl(2\pi(R-r),\,2\pi\sqrt{c}\bigr)$. With our choice \(c=(R-r)^2\), we have
\[
\mathrm{sys}_c(T^2)\ge 2\pi(R-r).
\]
In the coordinates \((u,v)\) the first and second fundamental forms of \(T^2\subset\mathbb{R}^3\) are
\[
[g] = \begin{pmatrix}
(R+r\cos v)^2 & 0 \\[2pt] 0 & r^2
\end{pmatrix},\qquad
[\mathbf{II}] = \begin{pmatrix}
-(R+r\cos v)\cos v & 0 \\[2pt] 0 & -r
\end{pmatrix},
\]
so
\[
[g_c] = \begin{pmatrix}
(R+r\cos v)^2 + c\cos^2 v & 0 \\[2pt] 0 & r^2 + c
\end{pmatrix}.
\]
It follows that
\[
\mathrm{vol}_c(T^2)
= 2\pi\sqrt{r^2+c}\int_0^{2\pi}\sqrt{(R+r\cos v)^2 + c\cos^2 v}\,dv.
\]
Using the inequality \(\sqrt{A^2+B^2}\le |A|+|B|\) with \(A=R+r\cos v\) and \(B=\sqrt{c}\cos v\), we obtain
\[
\mathrm{vol}_c(T^2)
\le 2\pi\sqrt{r^2+c}\int_0^{2\pi}\bigl((R+r\cos v)+\sqrt c\,|\cos v|\bigr)\,dv
= 2\pi\sqrt{r^2+c}\,(2\pi R + 4\sqrt c).
\]

Let $t = \frac{r}{R}\in(0,1)$. Then \(c=(R-r)^2 = R^2(1-t)^2\), and
\[
r^2 + c = R^2\bigl(t^2+(1-t)^2\bigr),\qquad
\sqrt c = R(1-t).
\]
Substituting these into the previous bounds gives
\[
\mathrm{sys}_c(T^2)^2
\ge 4\pi^2R^2(1-t)^2,
\qquad
\mathrm{vol}_c(T^2)
\le 2\pi R^2\sqrt{t^2+(1-t)^2}\,\bigl(2\pi + 4(1-t)\bigr).
\]

Therefore
\[
\frac{\mathrm{sys}_c(T^2)^2}{\mathrm{vol}_c(T^2)}
\ge \frac{2\pi(1-t)^2}{\sqrt{t^2+(1-t)^2}\,\bigl(2\pi + 4(1-t)\bigr)}.
\]

Let \(\nu\) be the unit normal vector field on \(T^2\). For any bottleneck \((q_1,q_2)\) of \(T^2\subset\mathbb{R}^3\),
\[
\|q_1-q_2\|_{\mathbb{R}^3} \ge
\begin{cases}
2r, & \nu_{q_1}=-\nu_{q_2},\\[2pt]
2R, & \nu_{q_1}=\nu_{q_2},
\end{cases}
\]
and both bounds are attained. In the first case, the Gauss images are antipodal, and in the second they coincide, so
\[
L_c(T^2)=\min\!\left(\tfrac12\sqrt{\,4r^2+c\pi^2\,},\,R\right).
\]
With \(c=(R-r)^2\) and \(t=r/R\) this equality becomes
\[
L_c(T^2)=\min\!\left(\tfrac12\sqrt{\,4R^2t^2+R^2(1-t)^2\pi^2\,},\,R\right)
\]
and we deduce $L_c(T^2) = R$ for sufficiently small $t > 0$. Combining this with \(\mathrm{vol}_c(T^2)
\le 2\pi R^2\sqrt{t^2+(1-t)^2}\,\bigl(2\pi + 4(1-t)\bigr) \) yields
\[
\frac{L_c(T^2)^2}{\mathrm{vol}_c(T^2)}
\ge
\frac{1}{2\pi\sqrt{t^2+(1-t)^2}\,\bigl(2\pi + 4(1-t)\bigr)}.
\]

\section{Proof of Theorem~\ref{orientation}}\label{orientation_proof}

Before proving Theorem~\ref{orientation}, we recall the following two theorems. Let $M \subset \mathbb{R}^D$ be a smooth closed connected orientable $d$-dimensional submanifold, and let $\mathsf{rch}_{\mathbb{R}^D}(M)=\tau >0$.

\begin{theorem}[Proposition 6.2 in \cite{niyogi2008finding}]\label{cosine} Let $p,q \in M$, and let $\theta_1 \geq \dots \geq \theta_d \geq 0 $ be the principal angles between $T_pM$ and $T_qM$. Then 
    $$\cos \theta_1 \geq 1-\frac{1}{\tau} d_M(p,q).$$
\end{theorem}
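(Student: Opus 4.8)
The plan is to adapt the Niyogi--Smale--Weinberger argument: follow an arbitrary tangent direction at $p$ by parallel transporting it, \emph{inside} $M$, along a minimizing geodesic to $q$, and control how far it has drifted in the ambient space $\mathbb{R}^D$ using the operator norm of the second fundamental form, which the reach hypothesis bounds by $1/\tau$. First I would record two preliminaries. (i) $\|\mathbf{II}\|_2 \le 1/\tau$: this is Federer's reach bound, and it is also immediate from the characterization of the reach recalled in the introduction, since $\tau = \min\bigl(1/\|\mathbf{II}\|_2,\ \inf\{\text{bottleneck widths}\}\bigr) \le 1/\|\mathbf{II}\|_2$. (ii) For all $X,Y \in T_pM$ one has $\|\mathbf{II}_p(X,Y)\|_2 \le \|\mathbf{II}_p\|_2\,\|X\|_2\,\|Y\|_2$, obtained by polarizing the symmetric form $\mathbf{II}_p$ and then rescaling $X,Y$ suitably.

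Next, since $M$ is closed, hence complete, fix a unit-speed minimizing geodesic $\gamma\colon[0,\ell]\to M$ with $\gamma(0)=p$, $\gamma(\ell)=q$, $\ell=d_M(p,q)$; we may assume $\ell<\tau$, as otherwise $1-d_M(p,q)/\tau\le 0\le\cos\theta_1$ and the claim is trivial. Fix an arbitrary unit vector $u\in T_pM$ and let $U(t)$ be its parallel transport along $\gamma$ with respect to the Levi--Civita connection $\nabla^M$, so that $U(t)\in T_{\gamma(t)}M$, $\|U(t)\|_2=1$, $U(0)=u$, and $U(\ell)\in T_qM$. Regarding $U$ as an $\mathbb{R}^D$-valued curve, the Gauss formula $\nabla^{\mathbb{R}^D}_{\gamma'}U=\nabla^M_{\gamma'}U+\mathbf{II}(\gamma',U)$ together with $\nabla^M_{\gamma'}U=0$ gives $\tfrac{d}{dt}U(t)=\mathbf{II}_{\gamma(t)}\bigl(\gamma'(t),U(t)\bigr)$, whence $\bigl\|\tfrac{d}{dt}U(t)\bigr\|_2\le\|\mathbf{II}_{\gamma(t)}\|_2\le 1/\tau$ by (i) and (ii). Integrating over $[0,\ell]$ yields $\|U(\ell)-u\|_2\le \ell/\tau = d_M(p,q)/\tau$.

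Finally I would convert this into the stated bound. Writing $A,B$ for orthonormal basis matrices of $T_pM,T_qM$, the largest principal angle $\theta_1$ satisfies $\cos\theta_1 = \sigma_{\min}(A^\top B)=\min_{u\in T_pM,\,\|u\|_2=1}\|P_{T_qM}u\|_2$. For the fixed $u$ above, $U(\ell)\in T_qM$ and $P_{T_qM}$ is the nearest-point projection, so $\|u-P_{T_qM}u\|_2\le\|u-U(\ell)\|_2\le d_M(p,q)/\tau$, hence $\|P_{T_qM}u\|_2\ge 1-d_M(p,q)/\tau$. Taking the infimum over all unit $u\in T_pM$ yields $\cos\theta_1\ge 1-\tfrac1\tau d_M(p,q)$.

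The routine ingredients — existence of a minimizing geodesic, the Gauss formula, and the triangle inequalities — are standard; the only points needing care are the convention that $\theta_1$ is the \emph{largest} principal angle (so $\cos\theta_1$ is the \emph{smallest} singular value of $A^\top B$, equivalently $\min_u\|P_{T_qM}u\|_2$), and the observation that parallel transport is exactly what makes the single bound $1/\tau$ apply \emph{uniformly} over all directions $u$ rather than only along $\gamma'$. I do not anticipate a genuine obstacle: the entire content is that the reach controls $\|\mathbf{II}\|_2$.
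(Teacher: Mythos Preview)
The paper does not supply a proof of this statement: it is quoted verbatim as Proposition~6.2 of Niyogi--Smale--Weinberger and used as a black box in the proof of Theorem~\ref{orientation}. So there is no in-paper argument to compare against.

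Your proof is correct and is essentially the standard argument underlying the cited result. The key steps all check out: the bound $\|\mathbf{II}\|_2 \le 1/\tau$ follows from the reach characterization; the off-diagonal bound $\|\mathbf{II}_p(X,Y)\|_2 \le \|\mathbf{II}_p\|_2\,\|X\|_2\,\|Y\|_2$ holds because for each unit normal $w$ the shape operator $S_w$ is self-adjoint, so $|\langle \mathbf{II}(X,Y),w\rangle| \le \|S_w\|_2\,\|X\|_2\,\|Y\|_2 \le \|\mathbf{II}_p\|_2\,\|X\|_2\,\|Y\|_2$, and taking the supremum over $w$ gives the claim; and the identification $\cos\theta_1 = \sigma_{\min}(A^\top B) = \min_{u \in T_pM,\ \|u\|_2=1}\|P_{T_qM}u\|_2$ is exactly right given the paper's convention that $\theta_1$ is the largest principal angle. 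The parallel-transport device is the clean way to make the bound uniform over all initial directions $u$, not just $\gamma'(0)$, which is precisely what the largest-angle statement requires.
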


\begin{theorem}[Proposition 6.3 in \cite{niyogi2008finding}]\label{distance} If $p,q \in M$ satisfy $\|p-q\|_{\mathbb{R}^D} \leq \frac{\tau}{2}$, then
$$d_M(p,q) \leq \tau - \tau \sqrt{1 - \frac{2\|p-q\|_{\mathbb{R}^D}}{\tau}}.$$
\end{theorem}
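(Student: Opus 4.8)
The plan is to reduce Theorem~\ref{orientation} to the intrinsic estimate $d_M(p,q)<\tau$ and then propagate the sign of $\det(B_p^\top B_\cdot)$ along a minimizing geodesic from $p$ to $q$. The crucial preliminary observation is that, although $M$ need not carry a global continuous orientation-compatible orthonormal frame, the scalar $\det(B_p^\top B_r)$ is nonetheless a \emph{well-defined} function of $r\in M$ once $B_p$ is fixed: replacing the chosen frame $B_r$ of $T_rM$ by another orientation-compatible orthonormal frame amounts to $B_r\mapsto B_rR$ with $R\in SO(d)$, and $\det(B_p^\top B_rR)=\det(B_p^\top B_r)\det R=\det(B_p^\top B_r)$. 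Since locally around any point one may pick a smooth orientation-compatible orthonormal frame, it follows that $r\mapsto\det(B_p^\top B_r)$ is a well-defined continuous (indeed smooth) function on $M$, taking the value $\det(B_p^\top B_p)=\det I_d=1$ at $r=p$. Moreover, writing $\theta_1(r)\ge\dots\ge\theta_d(r)\ge 0$ for the principal angles between $T_pM$ and $T_rM$, the singular values of $B_p^\top B_r$ are exactly $\cos\theta_1(r),\dots,\cos\theta_d(r)$, so $\bigl|\det(B_p^\top B_r)\bigr|=\prod_{i=1}^d\cos\theta_i(r)$; in particular the determinant is nonzero precisely when $\theta_1(r)<\tfrac\pi2$.

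For the distance bound, since $\|p-q\|_{\mathbb{R}^D}<\tfrac\tau2$ we have $\tfrac{2\|p-q\|_{\mathbb{R}^D}}{\tau}<1$, and Theorem~\ref{distance} gives
\[
  d_M(p,q)\;\le\;\tau-\tau\sqrt{1-\tfrac{2\|p-q\|_{\mathbb{R}^D}}{\tau}}\;<\;\tau .
\]
Because $M$ is closed, hence complete, Hopf--Rinow provides a minimizing geodesic $\gamma:[0,1]\to M$ with $\gamma(0)=p$, $\gamma(1)=q$, and $d_M(p,\gamma(t))=t\,d_M(p,q)<\tau$ for all $t\in[0,1]$. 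Applying Theorem~\ref{cosine} at $\gamma(t)$ yields $\cos\theta_1(\gamma(t))\ge 1-\tfrac1\tau d_M(p,\gamma(t))>0$, so every principal angle between $T_pM$ and $T_{\gamma(t)}M$ lies in $[0,\tfrac\pi2)$ and $\bigl|\det(B_p^\top B_{\gamma(t)})\bigr|=\prod_i\cos\theta_i(\gamma(t))>0$. Thus the continuous function $t\mapsto\det(B_p^\top B_{\gamma(t)})$ on $[0,1]$ never vanishes and equals $1$ at $t=0$; by the intermediate value theorem it stays positive on $[0,1]$, so $\det(B_p^\top B_q)=\det(B_p^\top B_{\gamma(1)})>0$, as claimed.

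The heart of the argument is the structural point in the first paragraph: the determinant $\det(B_p^\top B_r)$ is independent of the (locally, not globally) available choice of orientation-compatible frame at $r$ because the ambiguity group $SO(d)$ has unit determinant, and therefore it descends to a genuine continuous function on $M$ to which the intermediate value theorem can be applied. Once this is in hand, the remaining steps are routine: converting the extrinsic bound $\|p-q\|_{\mathbb{R}^D}<\tfrac\tau2$ into the intrinsic bound $d_M(p,q)<\tau$ via Theorem~\ref{distance}, invoking Hopf--Rinow for the minimizing geodesic, and identifying $\bigl|\det(B_p^\top B_r)\bigr|$ with the product of cosines of the principal angles so that Theorem~\ref{cosine} forces non-vanishing along $\gamma$. (Connectedness of $M$ in the hypothesis is not needed for this local statement; one only uses connectedness of the interval $[0,1]$.)
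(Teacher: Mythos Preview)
You have proved the wrong theorem. The statement you were asked to prove is the distance comparison
\[
  d_M(p,q)\;\le\;\tau-\tau\sqrt{1-\tfrac{2\|p-q\|_{\mathbb{R}^D}}{\tau}}
\]
for $\|p-q\|_{\mathbb{R}^D}\le\tfrac{\tau}{2}$ (this is Proposition~6.3 of Niyogi--Smale--Weinberger, which the paper simply cites and does not re-prove). Your write-up does not address this inequality at all: in your second paragraph you \emph{invoke} it (``Theorem~\ref{distance} gives \dots'') and then proceed to establish $\det(B_p^\top B_q)>0$, which is the content of Theorem~\ref{orientation}. So relative to the stated goal your argument is circular; the inequality you were supposed to prove is assumed as a black box.

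For what it is worth, as a proof of Theorem~\ref{orientation} your argument is correct and is essentially the paper's own proof in Appendix~\ref{orientation_proof}: both pick a minimizing geodesic $\gamma$ from $p$ to $q$, use Theorems~\ref{cosine} and~\ref{distance} to force $\cos\theta_1(\gamma(t))>0$ along $\gamma$, identify $|\det(B_p^\top B_{\gamma(t)})|$ with $\prod_i\cos\theta_i(\gamma(t))$, and conclude by continuity from the value $1$ at $t=0$. Your version adds a useful clarification the paper glosses over, namely that $r\mapsto\det(B_p^\top B_r)$ is well defined because the frame ambiguity lies in $SO(d)$. But none of this touches the statement you were actually asked to prove.
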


\begin{proof}[Proof of Theorem~\ref{orientation}]
    We claim that if $\|p-q\|_{\mathbb{R}^D} < \frac{\tau}{2}$, then no principal angle between $T_pM$ and $T_qM$ equals $\frac{\pi}{2}$. Assuming this claim, let
    $\gamma :[0,t] \mapsto M$ be a unit-speed geodesic from $p$ to $q$.
    Denote the principal angles between $T_pM$ and $T_{\gamma(s)}M$ as $\theta_1(s),\dots, \theta_d(s)$. Then
    $$\det (B_p^\top B_{\gamma(s)} ) = \pm \cos^2 \theta_1(s) \cdot \cos^2 \theta_2(s) \cdot \ldots \cdot \cos^2 \theta_d(s).$$
    Since the map $s \mapsto \det (B_p^\top B_{\gamma(s)})$ is continuous, we have $\theta_i(0)=0$ for every $1\leq i\leq d$ so that $\det(B_p^\top B_{p})=1$, and $\theta_i(s)$ never becomes $\frac{\pi}{2}$ for every $0 \leq s \leq t$, we have $\det (B_p^\top B_{\gamma(t)} ) =\det (B_p^\top B_q )  > 0$. 
    
    To show the claim, assume that $\|p-q\|_{\mathbb{R}^D} < \frac{\tau}{2}.$ By Theorem~\ref{cosine}, for principal angles $\theta_1,\dots,\theta_d$ between $T_pM$ and $T_qM$, we have
    $$1-\frac{1}{\tau} d_M(p,q) \leq \cos \theta_1  \leq \dots \leq \cos \theta_d$$
    so that $\cos \theta_i \geq 1-\frac{1}{\tau}d_M(p,q)$ for every $1\leq i \leq d.$ Combining Theorem~\ref{distance} with the above and using the assumption, we deduce
    $$\cos \theta_i \geq \sqrt{1- \frac{2 \| p-q\|_{\mathbb{R}^D}}{\tau}} > 0.$$
    This proves the claim and the theorem.
\end{proof}

\newpage
\bibliographystyle{unsrt}
\bibliography{references}

\bigskip
\noindent\textbf{Author Information.}

\noindent\textsc{Dongwoo Gang} \\
Department of Mathematical Sciences, Seoul National University \\
\textit{Email:} dongwoo.gang@snu.ac.kr

\end{document}